\numberwithin{equation}{section} 
   \def\MR#1{}
\definecolor{seagreen}{RGB}{46,139,87}
\definecolor{maroon}{RGB}{128,0,0}
\definecolor{darkviolet}{RGB}{148,0,211}
\definecolor{twelve}{RGB}{100,100,170}
\definecolor{thirteen}{RGB}{100,150,50}
\definecolor{fourteen}{RGB}{200,0,0}
\definecolor{fifteen}{RGB}{0,200,0}
\definecolor{sixteen}{RGB}{0,0,200}
\definecolor{seventeen}{RGB}{200,0,200}
\definecolor{eighteen}{RGB}{0,200,200}
\newcommand{\R}{\Bbb{R}}
\newcommand{\C}{\Bbb{C}}
\newcommand{\Z}{\Bbb{Z}}
\newcommand{\U}{\mathrm{U}}
\newcommand{\F}{\Bbb{F}}
\newcommand{\cC}{\mathcal{C}}
\newcommand{\cD}{\mathcal{D}}
\newcommand{\E}{\mathcal{E}}
\newcommand{\I}{\mathrm{I}}
\newcommand{\J}{\mathcal{J}}
\newcommand{\V}{\mathcal{V}}
\DeclareMathOperator{\colim}{\mathrm{colim}}
\DeclareMathOperator{\holim}{\mathrm{holim}}
\newcommand{\bigslant}[2]{{\raisebox{.2em}{$#1$}\left/\raisebox{-.2em}{$#2$}\right.}}
\DeclareMathOperator{\homog}{--homog--}
\DeclareMathOperator{\poly}{--poly--}
\newcommand{\s}{\mathrm{Sp}}
\DeclareMathOperator{\T}{\mathrm{Top}_\ast}
\newcommand{\Aut}{\mathrm{Aut}}
\DeclareMathOperator{\Hom}{\mathrm{Hom}}
\DeclareMathOperator{\Map}{\mathrm{Map}}
\DeclareMathOperator{\nat}{\mathrm{nat}}
\newcommand{\Ev}{\mathrm{Ev}}
\DeclareMathOperator{\id}{\mathrm{Id}}
\DeclareMathOperator{\ind}{\mathrm{ind}}
\DeclareMathOperator{\res}{\mathrm{res}}
\DeclareMathOperator{\BO}{\mathrm{BO}}
\DeclareMathOperator{\BU}{\mathrm{BU}}
\renewcommand{\O}{\mathrm{O}}
\newcommand{\op}{\mathrm{op}}
\newcommand{\Tow}{\mathrm{Tow}}
\newcommand{\CJ}{\J^{\mathbf{R}}}
\newcommand{\UJ}{\J^{\mathbf{U}}}
\newcommand{\RE}{\E^\mathbf{R}}
\newcommand{\CE}{C_2 \ltimes \E^\mathbf{R}}
\newcommand{\os}{\mathrm{Sp}^\mathbf{O}}
\newcommand{\us}{\mathrm{Sp}^\mathbf{U}}
\newcommand{\bR}{\mathbf{R}}
\newcommand{\bU}{\mathbf{U}}
\newcommand{\bF}{\mathbf{F}} 
\newcommand{\IUR}{\I_\mathbf{U}^\mathbf{R}}
\newcommand{\IRU}{\I_\mathbf{R}^\mathbf{U}}
\theoremstyle{definition}
\newtheorem{thm}{Theorem}[section]
\newtheorem{prop}[thm]{Proposition}
\newtheorem{lem}[thm]{Lemma}
\newtheorem{cor}[thm]{Corollary}
\newtheorem{ex}[thm]{Example}
\newtheorem{examples}[thm]{Examples}
\newtheorem{definition}[thm]{Definition}
\newtheorem{rem}[thm]{Remark}
\newtheorem{alphtheorem}{Theorem}[section]
\newcommand*{\centerfloat}{%
  \parindent \z@
  \leftskip \z@ \@plus 1fil \@minus \textwidth
  \rightskip\leftskip
  \parfillskip \z@skip}
  \newcommand{\adjunction}[4]{
\xymatrix@C+2cm{
#1:#2 \ar@<1ex>[r] &
\ar@<1ex>[l] #3:#4
}}
\begin{document}

\title[Recovering unitary calculus from calculus with reality]{Recovering unitary calculus from calculus with reality}
\author{Niall Taggart}
\address{Mathematical Institute, Utrecht University}
\email{n.c.taggart@uu.nl}
\date{\today}
\subjclass[2010]{Primary: 55P65. Secondary: 55P42, 55P91, 55U35}
\keywords{Functor calculus, unitary calculus, calculus with reality, stable splitting, Stiefel manifolds}
\maketitle

\begin{abstract}
By analogy with complex $K$--theory and $K$--theory with reality, there are theories of unitary functor calculus and unitary functor calculus with reality, both of which are generalisations of Weiss' orthogonal calculus. In this paper we show that unitary functor calculus can be completely recovered from the unitary functor calculus with reality, in analogy to how complex topological $K$--theory is completely recovered from $K$--theory with reality via forgetting the $C_2$--action. 
\end{abstract}

\setcounter{tocdepth}{1}
{\hypersetup{linkcolor=black} \tableofcontents}

\section*{Introduction}

\subsection*{Background}
Orthogonal and unitary calculus are  homotopy theoretic tools for studying functors from the category of real (respectively complex) inner product spaces to the category of based topological spaces. The calculi have had far reaching applications including geometrically, in the study of Miller splittings and Mitchell--Ricther filtrations on (complex) Stiefel manifolds (see the work of Arone \cite{Ar01}) but also homotopy theoretic through the connection with Goodwillie calculus (see the work of Barnes-Eldred \cite{BE16} and the author \cite{Taggartthesis}) and hence the EHP--sequence (see the work of Behrens \cite{BehrensEHP}) used to pursue computations of the stable homotopy groups of spheres. These calculi are also closely related through the complexification--realification adjunction on the inner product space level, similar to comparisons between real and complex topological $K$--theory. As such, orthogonal and unitary calculus are in analogy with real and complex $K$-theory, respectively. This is strengthened by some of the functors which have been studied using orthogonal and unitary calculus, particularly the classifying spaces of orthogonal (respectively unitary) group functors, $\BO(-)$ and $\BU(-)$ (see work of Arone \cite{Ar02}). 

Complex conjugation of complex inner product spaces defines an action of $C_2 = \mathrm{Gal}(\C \vert \R)$, the cyclic group of order two, on the category of complex inner product spaces. In \cite{Ta20b}, the author constructed a version of unitary calculus to take into account this $C_2$--action by complex conjugation. This ``calculus with reality'' in beneficial for a number of reasons, most notably, the layers of the Taylor tower in calculus with reality are completely determined up to homotopy by spectra with an action of $C_2 \ltimes U(n)$, which is a finer classification than that produced in unitary calculus, where in the latter, the layers are determined by spectra with an action of $U(n)$. Moreover, this calculus with reality plays the role of $K$--theory with reality in the analogy between the calculi and $K$--theory. This paper, and a sequel (see , are an attempt to make this analogy precise. In particular, both real and complex $K$--theory may be completely recovered by $K$--theory with reality, via taking $C_2$--fixed points and forgetting the $C_2$--action, respectively (see the work of Atiyah \cite{At66}). In this paper, we show that unitary calculus may be completely recovered from the calculus with reality via forgetting the $C_2$--action on the calculus. We plan to examine the relationship between orthogonal calculus and calculus with reality in future work. By the work of the author in \cite{Ta20}, there is already a comparison between unitary calculus and orthogonal calculus, hence the combination of this paper and \cite{Ta20} give a comparison between the calculus with reality and orthogonal calculus, but this comparison is unsatisfactory for a number of reasons, and we are optimistic of a more streamlined approach.

\subsection*{The Approach}
Our approach here is two--fold. Unitary calculus is indexed on the category of finite--dimensional inner product subspaces of $\C^\infty$, whereas, to have a well--defined $C_2\T$--enrichment, calculus with reality is indexed on the category of complexified finite--dimensional real inner product spaces, i.e., finite--dimensional inner product subspaces of $\C \otimes \R^\infty$ of the form $\C \otimes V$ for $V$ a finite--dimensional inner product subspaces of $\R^\infty$. The first strand of our approach is to show that the inclusion of universes $\C \otimes \R^\infty \hookrightarrow \C^\infty$ induces an equivalence of theories between unitary calculus indexed on subspaces of $\C^\infty$, and unitary calculus indexed on subspaces of $\C \otimes \R^\infty$. This equivalence of theories is induced by change of universe functors, $\IUR$ and $\IRU$, similar to those used by Mandell and May in their study of equivariant orthogonal spectra and $S$--modules \cite[V.1, VI.2]{MM02}. The notation is suggestive, $\IUR$ is the change of universe functor from unitary calculus to calculus with reality, and $\IRU$ is the change of universe functor from calculus with reality to unitary calculus, which is both left and right adjoint to $\IUR$, and  an equivalence of categories between the input categories of the calculi. 

The other aspect of our approach is to encode forgetting the $C_2$--action as a functor, denoted $i^*$, between the calculi, and exhibit that unitary calculus indexed on $\C \otimes \R^\infty$ is completely recovered by forgetting the $C_2$--action built into the calculus with reality. As our approach is two--fold, each section will have a subsection for each fold of our approach, and a final subsection giving the complete picture.

\subsection*{Key results}
The calculi under consideration are both categorifications of Taylor's Theorem from differential calculus. Through this analogy, the Taylor series is replaced by a Taylor tower approximating an input functor. The goal of the calculi is to use the information about the Taylor tower to recover information about our input functor. In detail, given a functor $F$, of the appropriate type, the calculi product a sequence of polynomial approximations $\{T_nF\}_{n \in \mathbb{N}}$, together with maps $T_nF \to T_{n-1}F$, for all $n \geq 1$. These maps assemble into a tower, approximating $F$. By considering the homotopy fibres of the maps in the tower, we gain information about the tower itself, and from this, information about the functor under investigation. These constructions are all functorial, hence the Taylor tower itself is functorial. We denote the Taylor tower of $F$ by $\Tow(F)$ for convenience.  The main result of this paper is a comparison of the Taylor towers in unitary calculus and calculus with reality. 

\begin{alphtheorem}[Theorem \ref{thm: recover tower}]\label{main thm}
The image, under our comparisons, of the Taylor tower of a functor with reality, is naturally isomorphic to the unitary Taylor tower of the image of the functor with reality under our comparisons, that is, for every functor with reality, $F$, there is a natural isomorphism
\[
\IRU(i^*(\Tow(F))) \cong \Tow(\IRU(i^*F)).
\]
\end{alphtheorem}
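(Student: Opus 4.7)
The plan is to exploit the two-fold structure of the approach outlined in the introduction. Since the composite $\IRU \circ i^*$ is what converts a functor with reality into a unitary functor indexed on subspaces of $\C \otimes \R^\infty$, I would prove the statement in two independent steps: first that $i^*$ (forgetting the $C_2$--action) commutes up to natural equivalence with the polynomial approximation construction, and then that the change-of-universe equivalence $\IRU$ commutes with it as well. Since the Taylor tower is assembled from the polynomial approximations $T_n F$ and the natural tower maps $T_n F \to T_{n-1} F$, and these maps are constructed functorially from the same inputs as the $T_n F$ themselves, naturality of the isomorphisms at each level will automatically yield the isomorphism of towers.

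For the first step, I would unpack the construction of $T_n F$ as the (homotopy) colimit of iterates $\tau_n^k F$, where $\tau_n$ is built out of mapping spaces whose underlying objects are Stiefel manifolds associated to the input inner product spaces. Concretely, I would verify that $i^*$ commutes with $\tau_n$: the $C_2$--action on the relevant Stiefel manifolds (induced by complex conjugation) disappears under $i^*$, and what remains is exactly the unitary analogue. Checking this commutation on pointset level, together with the fact that $i^*$ preserves the homotopy colimit over $k$, gives $i^* T_n F \simeq T_n i^* F$ naturally in $F$. The compatibility with the tower maps $T_n \to T_{n-1}$ is automatic from functoriality of $\tau_n$.

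For the second step, I would use that $\IRU$ is an equivalence of categories on the indexing side (this is already set up in the approach), and hence the induced functor on enriched functor categories preserves all limits, colimits, and enriched structure used to build polynomial approximations. In particular, the $n$-polynomial model structure is transported across $\IRU$ to the $n$-polynomial model structure on subspaces of $\C \otimes \R^\infty$, so $\IRU$ commutes with $T_n$ up to natural weak equivalence, and again compatibly with the tower maps. Composing the two natural isomorphisms level-wise, one obtains the desired natural isomorphism of Taylor towers.

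The main technical obstacle will be the first step, namely verifying carefully that the construction of $\tau_n$ in calculus with reality reduces, upon forgetting the $C_2$--action, to the corresponding construction in unitary calculus; one must be precise about the $C_2\T$--enriched mapping spaces, the Stiefel-manifold bundles involved, and the behaviour of homotopy colimits along the forgetful functor $i^*$ from $C_2$--objects to non-equivariant objects. Once this diagrammatic identification is made, the rest of the argument — functoriality of $\tau_n^k$, preservation of the relevant homotopy colimits, and passage through the change-of-universe equivalence $\IRU$ — proceeds formally.
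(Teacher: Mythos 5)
Your overall decomposition matches the paper's: split $\IRU \circ i^*$ into the forgetful step and the change-of-universe step, show each commutes with the polynomial approximations and tower maps, then compose. That much is right, and the first step is handled essentially as the paper does (Corollary~\ref{cor: forget and tau}, Lemma~\ref{lem: forget and poly approx}, Lemma~\ref{lem: tower after forgetting}), where the key is precisely what you observe: with the coarse model structure on $C_2\T$, forgetting the action commutes with $\tau_n$, with homotopy colimits, and with the tower maps.

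There is, however, a genuine gap in your second step, and also a misdiagnosis of where the real work lies. You assert that because $\IRU$ is an equivalence of categories it ``preserves all limits, colimits, and enriched structure used to build polynomial approximations,'' and hence commutes with $T_n$. This does not follow. An equivalence $\IRU$ intertwines $\tau_n^\bR$ with the conjugate endofunctor $\IRU \circ \tau_n^\bR \circ \IUR$, but you still have to prove that this conjugate coincides (up to natural isomorphism) with $\tau_n^\bU$ --- and the two constructions are genuinely different: $\tau_n^\bU$ is a homotopy limit over all nonzero subspaces of $\C^{n+1}$, while $\tau_n^\bR$ is a homotopy limit only over complexified subspaces of $\R^{n+1}$. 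Identifying the two requires the cofinality-of-enriched-ends criterion (Proposition~\ref{prop: cofinality of ends} and Example~\ref{ex: cofinal R to U}), applied via the rewriting of $\tau_n$ as a space of natural transformations out of the sphere bundle (Lemma~\ref{lem: agreement of tau_n}). Likewise, your claim that ``naturality of the isomorphisms at each level will automatically yield the isomorphism of towers'' glosses over the compatibility between the level-$n$ and level-$(n-1)$ identifications: the paper establishes this with an explicit commuting cube in the proof of Lemma~\ref{lem: tower and IUR}, tracking the inclusions of posets $\{0 \neq U \subseteq \C\otimes\R^k\} \hookrightarrow \{0 \neq U \subseteq \C^k\}$ across $k = n, n+1$ and across iterations of $\tau_n$. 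Consequently, you have mislocated the main technical obstacle: it is not the $i^*$ step (which is soft, given the coarse model structure), but the cofinality argument and the cube-diagram compatibility check in the change-of-universe step.
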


The power of the calculus with reality is captured in this comparison. It allows for several computations to be produced out of one computation. This is highlighted in Section \ref{section:Recovering stable splittings of Arone, Miller and Mitchell--Richter} where we prove stable splitting results using the calculus with reality, and demonstrate how these stable splittings completely recover stable splittings of Arone, Miller and Mitchell--Richter.

\begin{alphtheorem}[Theorem \ref{recovering of splitting}]
Let $F$ be a functor with reality. Suppose there exists a filtration of $F$ by sub--functors $F_n$, such that $F_0 = \ast$, and for all $n \geq 1$, the functor
\[
V \otimes \C \longmapsto F_n(V \otimes \C)/F_{n-1}(V \otimes \C),
\]
is (up to natural levelwise weak equivalence) of the form
\[
V_\C \longmapsto (X_n \wedge S^{\C^n \otimes V\otimes \C})_{h\U(n)},
\]
where $X_n$ is a based space equipped with an action of $C_2 \ltimes \U(n)$. Then the functor $\IRU(i^*F)$ is a unitary functor which stably splits in the sense of Arone~\cite[Theorem 1.1]{Ar01}.
\end{alphtheorem}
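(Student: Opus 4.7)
The plan is to transport the given filtration of $F$ through $i^*$ and then through $\IRU$, show that the resulting filtration of $\IRU(i^*F)$ has subquotients of exactly the shape demanded by Arone's splitting theorem \cite[Theorem 1.1]{Ar01}, and then quote that theorem. The key formal fact to establish is that both $i^*$ and $\IRU$ preserve the relevant structure: they are left adjoints (or equivalences), preserve weak equivalences between suitable objects, and commute with the operations appearing in the putative subquotients, namely smash products with representation spheres and homotopy orbits by $\U(n)$.

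First, I would apply $i^*$ to the filtration $F_0 \subset F_1 \subset \cdots$, producing a filtration $i^*F_0 \subset i^*F_1 \subset \cdots$ of the underlying unitary functor $i^*F$. Since $i^*$ is defined by forgetting the $C_2$--action, its effect on the subquotients is straightforward: the natural levelwise weak equivalence
\[
F_n(V_\C)/F_{n-1}(V_\C) \simeq (X_n \wedge S^{\C^n \otimes V_\C})_{h\U(n)}
\]
becomes, after forgetting the $C_2$--action, a natural levelwise weak equivalence
\[
i^*F_n(V_\C)/i^*F_{n-1}(V_\C) \simeq (i^*X_n \wedge S^{\C^n \otimes V_\C})_{h\U(n)},
\]
where now $i^*X_n$ is simply $X_n$ regarded as a $\U(n)$--space via restriction along $\U(n) \hookrightarrow C_2 \ltimes \U(n)$. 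Here it is essential that $i^*$ commutes with cofibre sequences, smash products with spheres, and homotopy orbits by $\U(n)$; this is immediate since $i^*$ is just the restriction along the inclusion of the trivial subgroup of $C_2$, and restriction in $C_2\T$--enriched categories has all of these properties.

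Next I would apply $\IRU$ to the resulting filtration. By the first strand of the paper, $\IRU$ is an equivalence between unitary calculus indexed on subspaces of $\C \otimes \R^\infty$ and unitary calculus indexed on subspaces of $\C^\infty$, so it preserves filtrations and levelwise weak equivalences, and it commutes (up to natural equivalence) with smash products with representation spheres and with $\U(n)$--homotopy orbits. Therefore the filtration $\IRU(i^*F_0) \subset \IRU(i^*F_1) \subset \cdots$ has natural subquotients
\[
\IRU(i^*F_n)(V)/\IRU(i^*F_{n-1})(V) \simeq (X_n \wedge S^{\C^n \otimes V})_{h\U(n)},
\]
where now $V$ ranges over finite--dimensional subspaces of $\C^\infty$ and $X_n$ carries its residual $\U(n)$--action. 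This is precisely the setup for Arone's stable splitting, so the conclusion follows by direct application of \cite[Theorem 1.1]{Ar01}.

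The main technical point, and the step I expect to require the most care, is the commutation of $\IRU$ with the homotopy orbit construction and with smashing with the representation sphere $S^{\C^n \otimes V_\C}$. This is where the description of $\IRU$ as induced by the inclusion of universes $\C \otimes \R^\infty \hookrightarrow \C^\infty$ is used: on input spaces $V_\C = \C \otimes V$ the change of universe functor acts as the natural identification $\C \otimes V \cong \C^{\dim_\R V}$, and this identification must be tracked through the $\U(n)$--action and through the smash with the sphere coordinate, so that the subquotient expression lands, up to natural equivalence, in the expected form as a unitary functor indexed on $\C^\infty$. Once this compatibility is in place the theorem is essentially a clean transport of structure.
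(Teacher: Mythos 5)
Your proposal is correct, but it takes a genuinely different route from the paper. You argue by directly transporting the filtration and verifying that $i^*$ and $\IRU$ commute with the constructions appearing in the subquotients (homotopy cofibres, $\U(n)$--homotopy orbits, smash with representation spheres). The paper instead defines the transported filtration $\IRU(i^*F)_n := \IRU(i^*F_n)$, identifies its subquotients with $\IRU(i^*(F_n/F_{n-1}))$ via preservation of homotopy cofibre sequences, and then invokes the already-established Taylor tower comparison (Theorem \ref{thm: recover tower}) together with the characterisation of $n$--homogeneous functors (Proposition \ref{homog characterisation}): since the subquotient $F_n/F_{n-1}$ is (after stabilisation) $n$--homogeneous, its image under the comparison functors must again be an $n$--homogeneous unitary functor, hence of Arone's required form. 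The paper's route leverages machinery it has already built and avoids any hands-on tracking; your route is more elementary but leaves the commutation step somewhat vague. That step can in fact be closed cleanly: the unitary functor $W \mapsto (i^*X_n \wedge S^{\C^n \otimes W})_{h\U(n)}$ precomposed with $j$, i.e.\ $\IUR$ applied to it, is exactly the functor $V_\C \mapsto (i^*X_n \wedge S^{\C^n \otimes V_\C})_{h\U(n)}$, so that since $(\IRU,\IUR)$ is an inverse equivalence, $\IRU$ of the latter recovers the former. Your informal description of $\IRU$ as ``acting as the identification $\C \otimes V \cong \C^{\dim_\R V}$'' is not quite right as stated ($\IRU$ acts on functors, not on input spaces, and is given by the coend formula $\UJ_0(\C^n,V)\wedge_{\U(n)} E(\C\otimes\R^n)$), so you would want to replace that heuristic with the inverse-equivalence argument above when writing up.
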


In order to prove Theorem \ref{main thm} we work along the Taylor tower, first comparing the polynomial approximations. In particular, there is a natural isomorphism between the image of the $n$--polynomial approximation of $F$ under forgetting the $C_2$--action through $i^*$, and change of universe $\IRU$, and the $n$--polynomial approximation of $\IRU(i^*F)$. We denote by $T_n^\bU$ the $n$--polynomial approximation functor in unitary calculus, and $T_n^\bR$ the $n$--polynomial approximation functor in calculus with reality, see Definition \ref{def: poly approx}.

 \begin{alphtheorem}[Theorem \ref{thm: recover polys}]\label{thm B}
 For a functor $F$, with reality, there is a natural isomorphism between $\IRU (i^*(T_n^\mathbf{R}F))$, and $T_n^\mathbf{U}(\IRU (i^*F))$.
 \end{alphtheorem}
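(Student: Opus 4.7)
The plan is to decompose the $n$--polynomial approximation into its building blocks and show that the operation $\IRU \circ i^*$ commutes with each of them. In both unitary calculus and calculus with reality, $T_n F$ is constructed as a sequential (homotopy) colimit
\[
T_n F \;=\; \hocolim\bigl(F \to \tau_n F \to \tau_n^2 F \to \cdots\bigr),
\]
where $\tau_n$ is a ``partial polynomialisation'' functor defined objectwise as a homotopy limit over a diagram of nonzero subspaces of some fixed $(n+1)$--dimensional space. So the theorem will follow once we know that $\IRU \circ i^*$ commutes, up to natural isomorphism, both with the filtered hocolim and with the partial polynomialisation functor $\tau_n$ on each side.

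The first of these is essentially formal. The functor $i^*$ is just forgetting the $C_2$--action, so it preserves all limits and colimits computed on underlying spaces, and by the Approach section $\IRU$ is simultaneously a left and a right adjoint to $\IUR$, hence preserves both limits and colimits. In particular, the sequential hocolim in the formula above commutes with $\IRU \circ i^*$, reducing the theorem to proving the analogous natural isomorphism
\[
\IRU(i^*(\tau_n^{\mathbf{R}} F)) \;\cong\; \tau_n^{\mathbf{U}}(\IRU(i^*F)).
\]
One then iterates and passes to the colimit.

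For the isomorphism at the level of $\tau_n$, I would first unpack $\tau_n^{\mathbf{R}}$ and $\tau_n^{\mathbf{U}}$ as homotopy limits indexed on the posets of nonzero subspaces of the appropriate $(n+1)$--dimensional ``test space'' in each theory: a complexified real $(n+1)$--plane for calculus with reality, and a complex $(n+1)$--plane for unitary calculus indexed on $\mathbf{C} \otimes \mathbf{R}^\infty$ (via the universe equivalence $\IRU \dashv \IUR \dashv \IRU$ from the first strand of the approach). Since $\IRU$ is an equivalence of indexing categories, it identifies these two poset diagrams, and since $i^*$ simply forgets the $C_2$--action on both the indexing poset and the diagram, the two homotopy limits agree naturally. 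Preservation of the homotopy limit itself then follows from $\IRU$ being a right adjoint and from $i^*$ preserving (homotopy) limits of underlying diagrams.

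The main obstacle is this last step: one needs the identification of indexing categories and of the diagrams of subspaces, and one needs to check that the homotopy limit model used in defining $\tau_n$ in each calculus is transported correctly under $\IRU$ and $i^*$. Concretely, this means verifying that the $C_2 \times \U(n+1)$--equivariant poset of nonzero subspaces of a complexified real $(n+1)$--plane, with its induced diagram, maps under $\IRU \circ i^*$ to the $\U(n+1)$--poset used to build $\tau_n^{\mathbf{U}}$. Once this matching of diagrams is in place, the adjointness properties of $\IRU$ together with the levelwise nature of $i^*$ deliver the required natural isomorphism, and the theorem follows.
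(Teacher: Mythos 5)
Your overall strategy---reduce to a commutation statement for $\tau_n$ and then use that $\IRU \circ i^*$ preserves the sequential homotopy colimit---matches the paper's structure, and your treatment of $i^*$ (it preserves the relevant homotopy limits and colimits because of the coarse model structure on $C_2\T$) is exactly how the paper handles the forgetful half (Corollary \ref{cor: forget and tau}, Lemma \ref{lem: forget and poly approx}). The observation that $\IRU$ is simultaneously a left and a right adjoint to $\IUR$, and hence preserves both limits and colimits, is also correct and used in the paper.

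The gap is in the $\tau_n$ step. You claim that ``since $\IRU$ is an equivalence of indexing categories, it identifies these two poset diagrams, \ldots the two homotopy limits agree naturally.'' This is false. The posets appearing in Definition \ref{def: polynomial} are $\mathbf{R}_{\leq n+1}$ (only complexified real subspaces $\C \otimes V$, $V \subseteq \R^{n+1}$) and $\mathbf{U}_{\leq n+1}$ (\emph{all} nonzero complex subspaces of $\C^{n+1}$, including lines such as $\mathrm{span}(1,i,0,\ldots,0)$ that are not complexified real). These posets are genuinely different and not equivalent; the equivalence $\IRU \dashv \IUR$ is between the diagram categories $\CJ_0$ and $\UJ_0$, not between these subspace posets. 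The inclusion $\mathbf{R}_{\leq n+1} \hookrightarrow \mathbf{U}_{\leq n+1}$ gives only a comparison map on homotopy limits (see the proof of Lemma \ref{lem: poly and change of universe R to U}, which needs a Five-Lemma argument just to show that map is a weak equivalence for $n$--polynomial $F$), and preservation of homotopy limits by $\IRU$ cannot change the indexing poset. What the paper does instead is pass to the alternative characterisation of $\tau_n F(V)$ as the space of natural transformations $\nat_{\E_0}(S\gamma_{n+1}(V,-)_+,F)$, i.e.\ a $\T$--enriched end over $\J_0$; the enriched-functor inclusion $j\colon \CJ_0 \to \UJ_0$ \emph{is} cofinal (Example \ref{ex: cofinal R to U}, via Proposition \ref{prop: cofinality of ends}), which yields the natural isomorphism $\IUR(\tau_n^\bU F) \cong \tau_n^\bR(\IUR F)$ of Lemma \ref{lem: agreement of tau_n}, and then Proposition \ref{thm: change of universe and poly approx} is deduced from this by the categorical equivalence. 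That change of description, from a homotopy limit over a subspace poset to an enriched end over the indexing category, is the idea your argument is missing.
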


The Taylor towers are characterised by homotopy fibre sequences 
\[
D_n F \longrightarrow T_nF \longrightarrow T_{n-1}F
\]
which encode the `error' between the $(n-1)$--polynomial approximation and the $n$--polynomial approximation. Theorem \ref{main thm} follows from Theorem \ref{thm B} by careful checking that our comparison functors preserve these homotopy fibre sequences. 

The homotopy theory of homogeneous functors of degree $n$ is captured in a model structure constructed as a right Bousfield localisation of a left Bousfield localisation of the projective model structure on the category of input functors (see Proposition \ref{prop: homog model structure}). This model structure, and hence the homotopy theory of homogeneous functors of degree $n$, is Quillen equivalent (via a zig--zag) to the stable model structure on the category of spectra with a $G(n)$--action, where the group $G(n)$ depends on the calculus under consideration. In Section \ref{section: model cats} we show how the model structures of unitary calculus may be recovered from the model structures for calculus with reality, producing a comprehensive diagram of model structures (see Figure \ref{fig: model cats for reality and UC}) which, after passage to homotopy categories commutes up to isomorphism. In particular, we exhibit Quillen equivalences between the $n$--homogeneous model structure on unitary calculus indexed on $\C^\infty$, and unitary calculus indexed on $\C \otimes \R^\infty$ via the change of universe functors.

\begin{alphtheorem}[Theorem \ref{thm: QE of homog model structures}]
The adjoint pairs
\[
\xymatrix@C+2cm{
n\homog\RE_0 \ar@<3ex>[r]|{\IRU } \ar@<-3ex>[r]|{\IRU }  & n\homog\E_0^\mathbf{U} \ar[l]|{\IUR },
}
\]
are Quillen equivalences between the $n$--homogeneous model structures, where this diagram should be interpreted as saying $\IUR$ has both a left and right adjoint.
\end{alphtheorem}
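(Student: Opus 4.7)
The plan is to promote the known equivalence of input categories to Quillen equivalences through the three layers of localization that define the $n$--homogeneous model structure, matching one layer at a time.

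First, I would establish that $(\IRU, \IUR)$ and $(\IUR, \IRU)$ are Quillen equivalences between the projective model structures on $\RE_0$ and $\E_0^\bU$. Because $\IRU$ and $\IUR$ are induced by an equivalence between the input categories of the two theories, the evaluation of $\IRU F$ or $\IUR G$ at any object is computed by a point--set prescription that detects and preserves projective weak equivalences and fibrations; both adjoints are thus simultaneously left and right Quillen. The fact that $\IRU$ is both the left and the right adjoint of $\IUR$ supplies natural isomorphisms $\IUR \circ \IRU \cong \id$ and $\IRU \circ \IUR \cong \id$, so the (derived) unit and counit of either adjunction are isomorphisms, giving Quillen equivalences at the projective level.

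Second, the $n$--polynomial model structures are left Bousfield localizations of the projective ones at a set of maps encoding $n$--polynomiality (built from Stiefel--type quotients of representables). Using Theorem \ref{thm B}, together with its on--the--nose analogue that does not involve $i^*$, I would show that $\IRU$ and $\IUR$ carry the localizing set on one side to a (cofinal subset of a) localizing set on the other. This uses that the polynomial approximation functors $T_n^\bR$ and $T_n^\bU$ are constructed from sequential colimits of $\map$--space constructions which are preserved by change of universe because change of universe is an equivalence on the indexing data. Consequently the Quillen adjunctions descend to the $n$--polynomial model structures, and the derived unit and counit remain isomorphisms after localization, yielding Quillen equivalences at that stage.

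Third, the $n$--homogeneous model structure is a right Bousfield localization of the $n$--polynomial one at a set of cells built from representables indexed on $n$--dimensional subspaces of the respective universe. The equivalence of input categories restricts to an equivalence between $n$--dimensional complexified real subspaces of $\C \otimes \R^\infty$ and $n$--dimensional complex subspaces of $\C^\infty$, so the cofibrant generators of the right localization are sent to one another under $\IRU$ and $\IUR$. The main obstacle I anticipate is not a single hard computation but the bookkeeping required at each localization stage: at every level one must verify that the designated class of localizing maps (or cells) on one side is sent to a class with the same localizing content on the other, and that the relevant cofibrant/fibrant replacement functors are compatible with change of universe. Once this matching is in hand, the Quillen equivalences at the polynomial and homogeneous levels both follow from the projective Quillen equivalence by the same transfer--of--localizations argument.
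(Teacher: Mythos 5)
Your proposal takes a genuinely different route from the paper's proof, and while the underlying idea is sound in outline, it glosses over precisely the point that carries all the weight in the actual argument.

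The paper does not argue by transferring a projective Quillen equivalence through the two Bousfield localizations. Instead it verifies the Quillen equivalence conditions directly: it checks that $\IUR$ is both left and right Quillen using the explicit characterisations of the cofibrations (projective cofibrations that are $(n-1)$--polynomial equivalences, Lemma \ref{lem: cofibrations of n-homog}) and of the fibrations (those of the $n$--polynomial structure); it observes that the units and counits are genuine isomorphisms; and — this is the crux — it shows that $\IUR$ \emph{reflects} $D_n$--equivalences by invoking Proposition \ref{prop: we in n--homog} to translate a $D_n$--equivalence into a levelwise equivalence of $\ind_0^n T_n$, commuting $\ind_0^n$ and $T_n$ past the change of universe, and then using that the change of universe reflects $n\pi_*$--isomorphisms between $n\Omega$--spectra in the intermediate category $\U(n)\E_n$ (Proposition \ref{lem: change of universe QE on intermediate} and \cite[Lemma 5.7]{Ta19}). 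Your sketch never engages with this reflection step; you fold it into "the derived unit and counit remain isomorphisms after localization," but for a Quillen equivalence obtained by localization you still need to know that the localizing data on the two sides match, and this is exactly where the content is.

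Two more concrete issues. First, your description of the right Bousfield localization cells as ``representables indexed on $n$--dimensional subspaces'' is not correct: the $n$--homogeneous model structure is colocalised at cells built from the $n$--th jet category $\J_n$ (roughly $\J_n(V,-)/\Aut(n)$), not from objects of $\J_0$; this is forced by the fact that $n$--homogeneity is detected by the $n$--th derivative living in $G(n)\E_n$. Your claimed matching of cells therefore needs to be re-examined with the correct generating set. Second, your step (2) asserts that the change of universe adjunction descends to a Quillen equivalence between the $n$--polynomial model structures. The paper explicitly cautions against this (see the remark preceding Proposition \ref{lem: QA for change of universe and poly}: ``We should not expect Quillen equivalences between polynomial model structures''), and only proves a Quillen adjunction there. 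You may well be right that the $n$--polynomial adjunctions are Quillen equivalences — the (co)units are isomorphisms and both functors preserve and reflect levelwise weak equivalences because every complex subspace of $\C^\infty$ is isometrically isomorphic to a complexified real one — but if so you are making a claim the paper pointedly declines to make, and you would need to argue it rather than assert it. In short: the transfer-of-localizations strategy could be made to work and might even be cleaner, but as written it replaces the one genuinely hard verification with an appeal to general machinery whose hypotheses you have not checked.
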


We do not claim that each unitary functor comes from a unique functor with reality. Given a unitary functor $F$, say, on the universe $\C \otimes \R^\infty$, we could define a $C_2$--action in several different ways, for example, we could take the internal $C_2$--action coming from complex conjugation on the matrices, which defines a $C_2$--action map
\[
F(\C \otimes V) \longrightarrow F(\C \otimes V), \quad F(c \otimes v) \longmapsto F(\bar{c} \otimes v).
\]
Alternatively, we could further affix an external $C_2$--action, giving a $C_2$--action map
\[
F(\C \otimes V) \longrightarrow F(\C \otimes V), \quad F(c \otimes v) \longmapsto g\cdot F(\bar{c} \otimes v),
\]
for $g \in C_2\setminus \{e\}$. These functors will have different Taylor towers in the calculus with reality, but forget to the same Taylor tower in unitary calculus. For example, consider the functor $\mathbb{S} : \C \otimes V \mapsto S^{\C \otimes V}$. The first action described above induced the complex conjugation action on $S^{\C \oplus V}$, whereas the second action in the case that we let $g \in C_2 \setminus \{e\}$ act by complex conjugation yields the trivial action on $S^{\C \oplus V}$. The difference in Taylor towers is immediate: one has first derivative with non-trivial $C_2$-action while the other has first derivative with trivial $C_2$-action.

\subsection*{Organisation}
We give a detailed overview of the calculi in Section \ref{section: the calculi}. The comparisons start in Section \ref{section: input functors} with the categories of input functors. A ``calculus'' is built from two main ideas; derivatives and polynomials. In Section \ref{section: derivatives} we examine the relationship between ``differentiation'' in unitary calculus and calculus with reality. Polynomial functors are the subject of study in Section \ref{section: polynomials}.  Using our comparisons of polynomial functors our attention turns to homogeneous functors, and the comparisons of homogeneous functors are complete in Section \ref{section: homogeneous}. We can hence directly compare the Taylor towers in Section \ref{section: Taylor towers}. The final section, Section \ref{section: model cats} gives a complete comparison between the model categories for unitary calculus and for calculus with reality. In Section \ref{section:Recovering stable splittings of Arone, Miller and Mitchell--Richter} we provide an application of our comparisons by demonstrating that stable splitting results in unitary calculus can be recovered from analogous results in the calculus with reality. 
In Section \ref{section:Recovering stable splittings of Arone, Miller and Mitchell--Richter} we prove a ``with reality'' version of a stable splitting of Arone, which in turn yields ``with reality'' stable splittings analogous to those of Miller and Mitchell--Richter. We further show that Arone's stable splitting is the image of our stable splitting under our comparison functors.

\subsection*{Notation and Conventions}
Given an adjoint pair, the left adjoint will always be written on the top or to the left, unless otherwise stated. A diagram of the form 
\[
\xymatrix@C+2cm{
\mathcal{A} \ar@<3ex>[r]|{f_1} \ar@<-3ex>[r]|{f_3}  & \mathcal{B} \ar[l]|{f_2},
}
\]
should be interpreted as saying that $f_1$ is left adjoing to $f_2$ and $f_2$ is left adjoint to $f_3$. We say a diagram of adjoint pairs is commutative if the sub--diagram of right adjoints commutes up to natural isomorphism, or equivalently, if  the sub--diagram of left adjoints commutes up to natural isomorphism. In some cases we will have to consider diagrams of the form
\[
\xymatrix@C+3cm@R+2cm{
\mathcal{A} \ar@<1ex>[d]^{g_2} \ar@<3ex>[r]|{f_1} \ar@<-3ex>[r]|{f_3}	& \mathcal{B} \ar@<1ex>[d]^{g_4} \ar[l]|{f_2}	 \\
\mathcal{A}  \ar@<1ex>[u]^{g_1} \ar@<3ex>[r]|{f_1} \ar@<-3ex>[r]|{f_3}	& \mathcal{B}  \ar@<1ex>[u]^{g_3} \ar[l]|{f_2}	  \\
}
\]
which we say commutes if both the diagram in which $f_2$ is a left adjoint, and the diagram in which $f_2$ is a right adjoint, commute in the above sense. 

\subsection*{Acknowledgements}
The author is thankful to Alexander Campbell for a helpful discussion on the cofinality statement for enriched ends in Proposition \ref{prop: cofinality of ends}. We thank David Barnes for careful reading of an earlier draft of this work and the anonymous referee for simplifying several arguments and greatly enhancing this article.

\section{The calculi}\label{section: the calculi}
In this section we give an overview of the theory of orthogonal calculus, unitary calculus and calculus with reality. Let $\F$ denote either $\R$, $\C$ or $\C \otimes \R$ and let $G(n)$ denote either $\O(n)$, $\U(n)$  or the semi--direct product $C_2 \ltimes \U(n)$, respectively, where $C_2$ acts on $\U(n)$ by term--wise complex conjugation. The use of $\C \otimes \R$ is to suggest a ``with reality'' approach to the calculi. For full details of the theories, see work of Weiss, Barnes-Oman and the author \cite{We95, BO13, Ta19, Ta20b}. We will use bold face letters to distinguish between the calculi, $\mathbf{O}$ for orthogonal calculus, $\mathbf{U}$ for unitary calculus, $\mathbf{R}$ for calculus with reality, and $\mathbf{F}$ for the general case.

\subsection{Input Functors} We begin by describing the \emph{input categories}, that is, the category of functors to which each calculus applies. We will refer to such functors as \emph{input functors}. Let $\J$ be the category of finite--dimensional $\F$--inner product subspaces of $\F^\infty$, and $\F$--linear isometries. For $\F=\C \otimes \R$ we think of $\F^\infty$ as $\C \otimes \R^\infty$, and consider subspaces of the form $V_\C :=\C \otimes V$, for $V$ a subspace of $\R^\infty$. Denote by $\J_0$ the category with the same objects as $\J$ and morphism space $\J_0(U,V) = \J(U,V)_+$. The category $\J_0$ is $\T$--enriched; $\J(U,V)$ is the Stiefel manifold of $\dim_\F (U)$--frames in $V$. 
These categories are the indexing categories for the functors under consideration in the calculi.

\begin{examples}\hfill
\begin{enumerate}
\item In the case $\F=\R$, the category $\J$ is the indexing category for orthogonal calculus, and is denoted $\J^\mathbf{O}$.
\item In the case $\F =\C$, the category $\J$ is the indexing category for unitary calculus, and is denoted $\J^\mathbf{U}$.
\item In the case $\F = \C \otimes \R$, the category $\J$ is another model for unitary calculus, and is denoted $\CJ$ (see Section \ref{section: input functors}). 
\item In the case $\F=\C \otimes \R$, the category $\J$ is $C_2\mathrm{Top}$--enriched, with the non--identity element $g$ of $C_2$ acting by conjugation by complex conjugation, i.e. for $f \in \J(U,V)$, 
\[
(g\cdot f)(u) = g(f(gu)) = \overline{f(\overline{u})}.
\]
In this case, $\J$ is the indexing category for calculus with reality, and we denoted it by $\J^\mathbf{R}$. It is important to note that using the universe $\C \otimes \R^\infty$ is crucial for a $C_2\T$--enrichment and hence for a well-defined calculus with reality (see \cite[\S\S1.1, \S\S1.2]{Ta20b}).
 \end{enumerate}
\end{examples}

\begin{definition}\label{def: input functors}
Define $\E_0$ to be the category of $\T$--enriched functors from $\J_0$ to $\T$, and define $\CE_0$ to be the category of $C_2\T$--enriched functors from $\CJ_0$ to $C_2\T$. 
\end{definition}

To ease notation when considering all calculi at once, we will denote $\CE_0$ by $\E_0$. We give some examples of the categories $\E_0$ and the calculi for which they are the input functors.

\begin{examples}\hfill
\begin{enumerate}
\item For $\F=\R$, we denote the category $\E_0$ by $\E_0^\mathbf{O}$. This is the category of input functors in orthogonal calculus, studied by Weiss \cite{We95} and Barnes and Oman \cite{BO13}.
\item For $\F=\C$, we denote the category $\E_0$ by $\E_0^\mathbf{U}$. This is the category of input functors in unitary calculus, studied by the author in \cite{Ta19}.
\item For $\F=\C \otimes \R$, we denote the category $\E_0$ by $\E_0^\mathbf{R}$. This is the underlying non-equivariant category of input functors for calculus with reality, and will play an important role in our comparisons. 
\item The category $\CE_0$ is the category of input functors in calculus with reality, studied by the author in \cite{Ta20b}.
\end{enumerate}
\end{examples}

These input categories are categories of diagram spaces in the sense of Mandell-May-Schwede-Shipley \cite[Definition 1.1]{MMSS01}, hence they may be equipped with a projective model structure. 

\begin{prop}[{\cite[Theorem 6.5]{MMSS01}}]
There is a cellular, proper and topological model structure on the category $\E_0$, with the weak equivalences and fibrations the levelwise weak homotopy equivalences and levelwise Serre fibrations respectively. The generating (acyclic) cofibrations are of the form $\J_0(U,-) \wedge i$, for $U \in \J_0$, where $i$ is a generating (acyclic) cofibration in $\T$. 
\end{prop}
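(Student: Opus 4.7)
The plan is to apply the standard recognition machinery for cofibrantly generated model structures on categories of enriched diagram spaces, following the blueprint of \cite[Theorem 6.5]{MMSS01}. The category $\E_0$ is, by Definition \ref{def: input functors}, the category of $\T$--enriched functors $\J_0 \to \T$, where $\J_0$ is a small $\T$--enriched category. For every object $U \in \J_0$ the evaluation functor $\Ev_U : \E_0 \to \T$ admits a left adjoint $F_U : \T \to \E_0$ given by $F_U(X) = \J_0(U,-) \wedge X$, as a consequence of the enriched Yoneda lemma. The strategy is to transfer the standard Quillen model structure on $\T$ across the (product of the) evaluation functors.

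First, I would define the three distinguished classes: a map $f : F \to G$ in $\E_0$ is a weak equivalence (resp.\ fibration) if $\Ev_U(f)$ is a weak homotopy equivalence (resp.\ Serre fibration) for every $U \in \J_0$, and a cofibration if it has the left lifting property against all acyclic fibrations. The proposed generating (acyclic) cofibrations are $\{F_U(i) = \J_0(U,-) \wedge i\}$ as $U$ ranges over $\J_0$ and $i$ over the generating (acyclic) cofibrations of $\T$. Next, I would apply Kan's transfer/recognition theorem (as in \cite[Theorem 11.3.2]{Hirschhorn} or in \cite{MMSS01}): it suffices to check that (i) the domains of the generating (acyclic) cofibrations are small relative to the relevant cell complexes, and (ii) every relative cell complex built from the generating acyclic cofibrations is a levelwise weak equivalence. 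Point (i) is automatic since the domains are based CW complexes and hence compact. Point (ii) follows from the adjunction $F_U \dashv \Ev_U$ together with the observation that $\Ev_V F_U(i) = \J_0(U,V) \wedge i$, which is an acyclic cofibration in $\T$ whenever $i$ is, because smashing with a cofibrant based space preserves acyclic cofibrations.

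With the model structure in hand, cellularity, properness and the topological enrichment are verified separately. Cellularity in the sense of \cite{Hirschhorn} requires compactness of the domains and codomains of the generating cofibrations relative to cofibrations, together with the smallness of cofibrations with respect to cofibrations; both follow from the analogous properties in $\T$ and the observation that colimits in $\E_0$ are computed levelwise. Right properness is inherited from $\T$ since fibrations, weak equivalences and pullbacks are detected levelwise; left properness likewise follows because cofibrations in $\E_0$ are levelwise closed (h-)cofibrations of based spaces, and pushouts are computed levelwise. Finally, the $\T$--enrichment on $\E_0$ given by the enriched end $\Map_{\E_0}(F,G) = \int_{U \in \J_0} \Map_\T(F(U),G(U))$ makes $\E_0$ into a $\T$--model category: one needs the pushout--product axiom, which reduces, via the identification $F_U(i) \Box F_V(j) \cong F_{U \oplus V}(i \Box j)$ combined with the usual adjointness arguments, to the pushout--product axiom in $\T$.

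The main obstacle, and the only genuinely non--formal input, is verifying property (ii) of the transfer principle, i.e.\ that pushouts and transfinite compositions of generating acyclic cofibrations are levelwise weak equivalences. This uses crucially that the mapping spaces $\J_0(U,V)$ are nondegenerately based (they are based Stiefel manifolds with a disjoint basepoint), so smashing with them preserves both weak equivalences and cofibrations between well--based spaces, allowing levelwise acyclic cofibrations to be preserved under pushout along levelwise cofibrations. All remaining verifications are entirely formal once the transfer has been carried out, and the equivariant case $\CE_0$ proceeds identically, replacing $\T$ by $C_2\T$ throughout.
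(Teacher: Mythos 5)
The paper itself offers no proof of this proposition; it is cited verbatim from \cite[Theorem 6.5]{MMSS01}, so the comparison is with the argument in that reference. Your sketch correctly reconstructs the standard transfer approach used there: define weak equivalences and fibrations levelwise, take generating (acyclic) cofibrations of the form $F_U(i) = \J_0(U,-)\wedge i$ coming from the left adjoints $F_U \dashv \Ev_U$, verify smallness via compactness of domains, and deduce the acyclicity hypothesis of the transfer theorem from $\Ev_V F_U(i) = \J_0(U,V)\wedge i$ together with the fact that the Stiefel manifolds with disjoint basepoints are cofibrant in $\T$. The verification of cellularity and properness by levelwise reduction is also the intended argument.

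The one point to tighten is your treatment of the topological ($\T$--model category) structure. The relevant pushout--product axiom is for the tensoring $\T \times \E_0 \to \E_0$, $(X,F)\mapsto X\wedge F$ (levelwise), so by adjointness the reduction to generators should proceed through the identification $i \Box F_U(k) \cong F_U(i\Box k)$ for $i,k$ maps of based spaces, not the ``external'' formula $F_U(i)\Box F_V(j) \cong F_{U\oplus V}(i\Box j)$ that you write. That external identity presupposes a monoidal (Day-convolution type) structure on $\E_0$ indexed by a monoidal structure on $\J_0$, and in particular requires $U\oplus V$ to be an object of $\J_0$, which it generally is not (the objects of $\J_0$ are subspaces of $\F^\infty$, not abstract vector spaces). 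No monoidal model structure on $\E_0$ is being asserted in the statement. With that substitution, which reduces immediately to the pushout--product axiom in $\T$, your argument is sound and matches the cited reference.
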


There is also a projective model structure in $\CE_0$, defined similarly, but with different generating (acyclic) cofibrations, transferred from the underlying model structure on $C_2\T$. This model structure on $C_2\T$ is sometimes called the coarse model structure, and has weak equivalences and fibrations defined to be weak homotopy equivalences and Serre fibrations, respectively. 

\begin{prop}[{\cite[Theorem 6.5]{MMSS01}}]
There is a cellular, proper and topological model structure on the category $\CE_0$, with the weak equivalences and fibrations the levelwise weak homotopy equivalences and levelwise Serre fibrations of the coarse model structure on $C_2\T$, respectively. The generating (acyclic) cofibrations are of the form $\J_0(U,-) \wedge (C_2)_+ \wedge  i$, for $U \in \J_0$, where $i$ is a generating (acyclic) cofibration in $\T$. 
\end{prop}

\subsection{Polynomial functors} We give a short overview of polynomial functors, for full details on these functors see \cite{We95, BO13, Ta19, Ta20b}. The basic idea of a polynomial functor, is that of a categorification of polynomial functions in differential calculus. Denote by $\mathbf{F}_{\leq n}$ the poset of non--zero finite--dimensional subspaces of $\F^n$. For $\F = \C \otimes \R$, we only consider subspaces of the form $\C \otimes V$, for $V$ a finite--dimensional inner product subspace of $\R^{n}$. 

\begin{definition}\label{def: polynomial}
A functor $F \in \E_0$ is \textit{polynomial of degree less than or equal $n$} or equivalently \textit{$n$--polynomial} if the canonical map
\[
F(V) \to \underset{U \in \mathbf{F}_{\leq n+1}}{\holim}F(U \oplus V) =: \tau_nF(V)
\]
is a weak homotopy equivalence for all $V \in \J_0$. 
\end{definition}

In the case of $\CE_0$, the homotopy limit inherits a $C_2$--action via the $C_2$--action on $F(V \oplus U)$ for all $U \in \mathbf{R}_{\leq n+1}$. Iterating $\tau_n$ yields the polynomial approximation functor, which is the universal $n$--polynomial functor up to homotopy, that is, any map from a functor $F$ to a $n$--polynomial functor $E$, will, up to homotopy, factor through the $n$--polynomial approximation of $F$.

\begin{definition}\label{def: poly approx}
The \textit{$n$--polynomial approximation}, $T_nF$, of a functor $F \in \E_0$ is defined to be the homotopy colimit of the sequential diagram
\[
\xymatrix{
F \ar[r]^{\rho} &  \tau_nF \ar[r]^{\rho} &  \tau_n^2F \ar[r]^{\rho} &  \tau_n^3F \ar[r]^{\rho} & \cdots.  
}
\]
\end{definition}

Since an $n$--polynomial functor is $(n+1)$--polynomial (see \cite[Proposition 5.4]{We95}) these polynomial approximation functors assemble into a Taylor tower 
\[
\xymatrix{
		&	&	F(V) \ar@/_1pc/[dl]	  \ar@/^1pc/[dr]  \ar@/^1.3pc/[drr]   			     	&			& \\
 \cdots \ar[r]_{r_{n+1}} & T_nF(V) \ar[r]_{r_n} & \cdots \ar[r]_{r_2} & T_1F(V) \ar[r]_{r_1} & F(\C^\infty) \\
 &  D_nF(V) \ar[u] & & D_1F(V) \ar[u] &\\
}
\]
approximating a given input functor. 

Moreover there is a model structure on $\E_0$ which captures the homotopy theory of $n$--polynomial functors, given by suitably left Bousfield localising the projective model structure on $\E_0$. 

\begin{prop}[{\cite[Proposition 6.5]{BO13}, \cite[Proposition 2.8]{Ta19}, \cite[Proposition 2.5]{Ta20b}}]
There is a cellular, proper and topological model structure on $\E_0$, where a map $f\colon E \to F$ is a weak equivalence if $T_nf\colon T_nE \to T_nF$ is a levelwise weak equivalence, the cofibrations are the cofibrations of the projective model structure and the fibrations are levelwise fibrations such that 
\[
\xymatrix{
E \ar[r]^f \ar[d]_{\eta_E} & F \ar[d]^{\eta_F} \\
T_nE \ar[r]_{T_nf} & T_n F
}
\]
is a homotopy pullback square. The fibrant objects of this model structure are precisely the $n$--polynomial functors and $T_n$ is a fibrant replacement functor. We call this the $n$--polynomial model structure and it is denoted $n\poly\E_0$.
\end{prop}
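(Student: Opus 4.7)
The plan is to construct this model structure as a left Bousfield localisation of the projective model structure on $\E_0$ at a set $S_n$ whose local objects are exactly the $n$-polynomial functors. I would take $S_n$ to consist of (cofibrant replacements of) the natural comparison maps
\[
\nu_V \colon \hocolim_{U \in \mathbf{F}_{\leq n+1}} \J_0(U \oplus V, -) \longrightarrow \J_0(V, -),
\]
one for each $V \in \J_0$. By the enriched Yoneda lemma and the definition of $\tau_n$, we have $\Map(\J_0(V,-), F) \simeq F(V)$ and $\Map(\hocolim \J_0(U \oplus V, -), F) \simeq \tau_n F(V)$, so $F$ is $S_n$-local iff $F(V) \to \tau_n F(V)$ is a weak equivalence for all $V$, iff $F$ is $n$-polynomial.

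Since the projective model structure on $\E_0$ is cellular, proper, topological, and cofibrantly generated, Hirschhorn's existence theorem produces the localisation, which inherits these properties, has the same cofibrations as the projective structure, and whose fibrant objects are exactly the $n$-polynomial functors; the $C_2\T$-enriched case $\RE_0$ uses the equivariant analogue of the same result. To identify the weak equivalences as those maps $f$ for which $T_n f$ is a levelwise equivalence, I would show that $T_n$ is a fibrant replacement functor. This breaks into two steps: (i) $T_n F$ is itself $n$-polynomial, via a homotopy-colimit/homotopy-limit interchange $\tau_n(\hocolim_k \tau_n^k F) \simeq \hocolim_k \tau_n^{k+1} F$ combined with Weiss's connectivity estimates forcing the transition maps $\tau_n^k F \to \tau_n^{k+1} F$ to become arbitrarily highly connected; and (ii) the unit $\eta_F\colon F \to T_n F$ is an $S_n$-local equivalence, as it is the sequential homotopy colimit of the $S_n$-local equivalences $\tau_n^k \eta_F$. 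The characterisation of the fibrations then follows from the Bousfield--Friedlander-type recognition principle for fibrations in a left Bousfield localisation with a homotopy-preserving fibrant replacement: a map is a fibration iff it is a projective fibration and the comparison square with $T_n$ is a homotopy pullback.

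The main obstacle is step (i): that $T_n F$ is genuinely $n$-polynomial relies on Weiss's delicate connectivity analysis of the natural map $\tau_n F \to \tau_n^2 F$, which must be carefully transported to the unitary and $C_2$-equivariant settings. In particular, verifying the homotopy-colimit/homotopy-limit interchange in $\RE_0$ requires the relevant enriched (co)ends to interact correctly with the $C_2$-action; this is precisely the technical content of the cited references. Once this is secured, the remaining assertions follow formally from the general theory of left Bousfield localisation.
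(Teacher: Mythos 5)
Your approach matches the one taken in the cited references (\cite[Proposition 6.5]{BO13}, \cite[Proposition 2.8]{Ta19}, \cite[Proposition 2.5]{Ta20b}): construct the $n$-polynomial model structure as a left Bousfield localisation of the projective model structure, show $T_n$ is a fibrant replacement functor, and read off the fibration characterisation from a Bousfield--Friedlander-type recognition principle. Your localising set is a cosmetic variant of theirs; they localise at the sphere-bundle projections $S\gamma_{n+1}(V,-)_+ \to \J_0(V,-)$, and Weiss's Proposition 5.2 identifies $S\gamma_{n+1}(V,-)_+$ (up to weak equivalence) with your $\hocolim_{U} \J_0(U\oplus V,-)$, so the two localisations agree.

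The one place you go wrong is the mechanism behind step (i), that $T_nF$ is $n$-polynomial. You appeal to ``connectivity estimates forcing the transition maps $\tau_n^kF \to \tau_n^{k+1}F$ to become arbitrarily highly connected,'' but for a general $F$ these transition maps have no such connectivity improvement --- Weiss's connectivity analysis only enters for analytic functors, which is not the generality required here. What actually produces the hocolim/holim interchange is the corepresentability of $\tau_n$ that you already have on the table: by Weiss's Proposition 5.2, $\tau_nF(V)\cong \nat(S\gamma_{n+1}(V,-)_+,F)$, and since $S\gamma_{n+1}(V,-)_+$ is projectively cofibrant and levelwise a finite CW-complex, mapping out of it commutes with sequential homotopy colimits of levelwise-fibrant diagrams. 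This gives $\tau_n(\hocolim_k\tau_n^kF)\simeq\hocolim_k\tau_n^{k+1}F$ at once, and then $\tau_nT_nF\simeq T_nF$ follows from cofinality of the index shift, with no connectivity input. With that substitution the argument is sound. (One small notational slip: the $C_2\T$-enriched input category is $\CE_0$, whereas $\RE_0$ denotes its underlying non-equivariant category.)
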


\subsection{Homogeneous functors} The $n$--th layer of the Taylor tower, i.e. the homotopy fibre of the map $T_nF \to T_{n-1}F$, denoted $D_nF$, satisfies the property that it is both $n$--polynomial, and has trivial $(n-1)$--polynomial approximation (see e.g. \cite[Example 2.10]{Ta19}). The class of functors which satisfy this property are called $n$--homogeneous. 

\begin{definition}
A functor $F \in \E_0$ is said to be \textit{homogeneous of degree $n$} or equivalently \textit{$n$--homogeneous} if it is both $n$--polynomial and has trivial $(n-1)$--polynomial approximation. 
\end{definition}

There is a further model structure on $\E_0$ which captures the homotopy theory of $n$--homogeneous functors, given by suitably right Bousfield localising the $n$--polynomial model structure. 

\begin{prop}[{\cite[Proposition 6.9]{BO13}, \cite[Proposition 3.13]{Ta19}, \cite[Proposition 3.2]{Ta20b}}]\label{prop: homog model structure}
There is a topological model structure on $\E_0$ where the weak equivalences are those maps $f$ such that $D_nf$ is a weak equivalence in $\E_0$, the fibrations are the fibrations of the $n$-polynomial model structure and the cofibrations are those maps with the left lifting property with respect to the acyclic fibrations. The fibrant objects are $n$-polynomial and the cofibrant--fibrant objects are the projectively cofibrant $n$-homogeneous functors.
\end{prop}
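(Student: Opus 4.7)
The plan is to obtain this model structure as a right Bousfield localization (in the sense of Hirschhorn) of the $n$-polynomial model structure constructed in the previous proposition. Since that model structure is cellular, proper, and topological, Hirschhorn's cellularization machinery applies once one fixes a suitable set $K$ of objects at which to localize. The natural candidate for $K$ is a set of (projectively cofibrant replacements of) representing functors associated to $V$ with $\dim_\F V = n$, chosen so that the derived mapping spaces out of $K$ compute the $n$-th derivative of a given functor.

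With such a $K$ in hand, Hirschhorn's theorem immediately yields a model structure in which the fibrations, and hence the fibrant objects, are inherited from the $n$-polynomial model structure; the cofibrations are characterised by the left-lifting property against acyclic fibrations; and the weak equivalences are the $K$-colocal maps. To match the statement, the essential identification is between $K$-colocal equivalences and $D_n$-equivalences. This is where the classification of $n$-homogeneous functors (as spectra with a $G(n)$-action) enters: an $n$-polynomial functor $F$ with trivial $T_{n-1}F$ is determined, up to equivalence, by its $n$-th derivative, and each generator in $K$ detects precisely this information. A standard argument then shows that $\map_{\E_0}(k, f)$ is a weak equivalence for every $k \in K$ if and only if $D_n f$ is a weak equivalence in the projective model structure.

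The identification of cofibrant--fibrant objects as the projectively cofibrant $n$-homogeneous functors then follows by combining the two characterisations: such an object is $n$-polynomial by fibrancy and is $K$-cellular by cofibrancy, which together force the $(n-1)$-polynomial approximation to vanish, making it $n$-homogeneous; conversely, any projectively cofibrant $n$-homogeneous functor is $K$-cellular. The topological enrichment is inherited from the $n$-polynomial model structure because right Bousfield localization of a topological model structure at a set of objects preserves topological enrichment.

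The main obstacle is the choice of $K$ and the verification that $K$-colocal equivalences coincide with $D_n$-equivalences; this rests on the classification theorem for $n$-homogeneous functors, which in each of the three calculi is established using the differentiation machinery developed in \cite{We95, BO13, Ta19, Ta20b}. Once that classification is in place, the remaining axioms are formal consequences of Hirschhorn's right Bousfield localization theorem, and the three cases $\mathbf{O}$, $\mathbf{U}$, and $\mathbf{R}$ proceed in parallel with only the equivariance data of the generating set $K$ changing.
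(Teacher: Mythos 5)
Your general strategy matches the cited references \cite{BO13, Ta19, Ta20b}: the $n$-homogeneous model structure is obtained as a right Bousfield localization (cellularization) of the $n$-polynomial model structure, which is cellular, proper and topological, so Hirschhorn's machinery applies and the fibrations, cofibrations, fibrant objects, and topological enrichment follow formally as you describe. However, the colocalizing set $K$ you propose is incorrect: it must be indexed over \emph{all} finite-dimensional $V$, not just those with $\dim_\F V = n$. The derivative $\ind_0^n F$ is a functor of $V$, and a map $f$ is a $D_n$-equivalence precisely when $\ind_0^n T_n f$ is a \emph{levelwise} weak equivalence (Proposition \ref{prop: we in n--homog}); to detect this you need, for each $V$, a cell whose derived mapping space into $T_n F$ computes $\ind_0^n T_n F(V)$. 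In the references the cells are (cofibrant replacements of) objects of the form $\res_0^n \J_n(V,-)/\Aut(n)$ with $V$ ranging over all of $\J_0$. Restricting to a single dimension would only test the derivative at that one level, and the claim that this suffices to recover all $D_n$-equivalences is not justified and would need a separate stabilization argument.

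A secondary concern is that you justify the identification of $K$-colocal maps with $D_n$-equivalences by appealing to the classification of $n$-homogeneous functors as infinite loop spaces of spectra with $G(n)$-action (Proposition \ref{homog characterisation}). In the unitary and reality calculi, that classification (\cite[Theorem 7.1]{Ta19}, \cite[Theorem 7.1]{Ta20b}) is established \emph{after}, and via, the $n$-homogeneous model structure and its zig-zag of Quillen equivalences with spectra, so invoking it at this stage is circular. The cited proofs instead identify the weak equivalences directly by computing derived mapping spaces out of the cells in terms of $\ind_0^n T_n$, using only the differentiation adjunction $(\res_0^n/\Aut(n),\ \ind_0^n\varepsilon^*)$; this is the non-circular route you should take once the indexing of $K$ is corrected.
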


In \cite[\S8]{Ta19}, the author gave further characterisations of the $n$--homogeneous model structure. We start with the acyclic fibrations.

\begin{prop}[{\cite[Proposition 8.3, Corollary 8.4]{Ta19}}]\label{characterisation of acyclic fibs}\label{lem: acyclic fibrations in n-homog}\hfill
\begin{enumerate}
\item A map $f\colon E \to F$ is an acyclic fibration in the $n$--homogeneous model structure if and only if it is a fibration in the $(n-1)$--polynomial model structure and an $D_n$--equivalence.
\item A map $f\colon E \to F$ between $n$--polynomial objects is an acyclic fibration in the $n$--homogeneous model structure if and only if it is a fibration in the $(n-1)$--polynomial model structure.
\end{enumerate}
\end{prop}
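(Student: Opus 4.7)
The approach rests on two ingredients already in place. First, by Proposition~\ref{prop: homog model structure}, the $n$--homogeneous model structure is a right Bousfield localisation of the $n$--polynomial model structure, so its weak equivalences are exactly the $D_n$--equivalences and its fibrations coincide with those of the $n$--polynomial model structure. Second, the functors of interest fit into a natural homotopy fibre sequence $D_nF \to T_nF \to T_{n-1}F$. The plan is to use this fibre sequence to translate between the $T_n$-- and $T_{n-1}$--homotopy pullback conditions appearing in the $n$-- and $(n-1)$--polynomial fibration criteria, for maps that are already $D_n$--equivalences.

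For part~(1), unwinding the definitions, an acyclic fibration in the $n$--homogeneous model structure is a levelwise fibration which is a $D_n$--equivalence and for which the square
\[
\xymatrix{
E \ar[r]^f \ar[d]_{\eta_E} & F \ar[d]^{\eta_F} \\
T_nE \ar[r]_{T_nf} & T_n F
}
\]
is homotopy cartesian. First I would observe that for any $D_n$--equivalence $f\colon E \to F$, the square
\[
\xymatrix{
T_nE \ar[r] \ar[d]_{T_nf} & T_{n-1}E \ar[d]^{T_{n-1}f} \\
T_nF \ar[r] & T_{n-1}F
}
\]
is homotopy cartesian, because its vertical homotopy fibres compute $D_nE$ and $D_nF$ and the map induced between them is $D_nf$, a weak equivalence by assumption. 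Pasting this with the $\eta$--square yields the diagram
\[
\xymatrix{
E \ar[r] \ar[d]_f & T_nE \ar[r] \ar[d] & T_{n-1}E \ar[d] \\
F \ar[r] & T_nF \ar[r] & T_{n-1}F
}
\]
and the pasting lemma for homotopy cartesian squares then says the $T_n$--square on the left is homotopy cartesian if and only if the outer $T_{n-1}$--square is. Both directions of part~(1) follow from this equivalence, since the levelwise fibration hypothesis is shared by the two fibration conditions.

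Part~(2) reduces to part~(1). The forward implication is immediate. For the converse, if $E$ and $F$ are $n$--polynomial then $\eta_E\colon E \to T_nE$ and $\eta_F\colon F \to T_nF$ are weak equivalences, so the homotopy fibres of $E \to T_{n-1}E$ and $F \to T_{n-1}F$ are naturally identified with $D_nE$ and $D_nF$ respectively. A fibration in the $(n-1)$--polynomial model structure has homotopy cartesian $T_{n-1}$--naturality square, so the induced map on vertical homotopy fibres is a weak equivalence $D_nE \to D_nF$; hence $f$ is a $D_n$--equivalence and part~(1) supplies the remainder. The main obstacle I foresee is the appeal to the pasting lemma in this topological setting, since one needs to know that the projective model structure on $\E_0$ is right proper and that the horizontal maps $T_nE \to T_{n-1}E$ can, up to weak equivalence, be replaced by levelwise fibrations so that the strict and homotopy pullbacks agree. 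Both are standard but need to be checked in the calculus--specific setup of $\E_0$ rather than invoked as folklore.
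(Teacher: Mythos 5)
The paper cites \cite[Proposition 8.3, Corollary 8.4]{Ta19} for this statement without reproducing a proof, so there is no in-paper argument to compare against; I will evaluate the proposal on its own terms. The overall strategy of pasting the naturality square for $\eta$ against the square built from the Taylor tower maps $T_n \to T_{n-1}$ is sensible, and part~(2) reduces to part~(1) exactly as you describe.

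The gap is in the intermediate claim that for any $D_n$--equivalence $f$ the square with corners $T_nE$, $T_{n-1}E$, $T_nF$, $T_{n-1}F$ is homotopy cartesian. The justification offered, that the induced map on (horizontal) homotopy fibres is $D_nf$ and hence a weak equivalence, only controls the fibre over the canonical basepoint of $T_{n-1}E(V)$; by definition $D_nE$ is that single homotopy fibre. For the square to be homotopy cartesian one must compare fibres over every point of $T_{n-1}E(V)$, equivalently over every path component, and in the calculi these spaces need not be connected, nor need the map $T_nE(V) \to T_{n-1}E(V)$ be surjective on $\pi_0$. So knowing the basepoint fibre comparison is an equivalence does not determine the homotopy pullback, and the pasting argument as written does not go through. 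Some additional input is needed: for instance, one could bypass the fibre comparison entirely and argue via lifting properties, observing that the $n$--homogeneous cofibrations are exactly the $(n-1)$--polynomial acyclic cofibrations (Lemma~\ref{lem: cofibrations of n-homog}), so the identification of $n$--homogeneous acyclic fibrations with $(n-1)$--polynomial fibrations follows formally and the $D_n$--equivalence condition in part~(1) is automatic for weak equivalences; or one could work with the derivative characterisation of $D_n$--equivalences in Proposition~\ref{prop: we in n--homog}, which is better adapted to detecting the homotopy pullback condition. The concerns you raise at the end about right properness and replacing the tower maps by fibrations are not the bottleneck: the projective model structure on $\E_0$ is proper by construction and the pasting lemma for homotopy cartesian squares is valid in any right proper model category. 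The $\pi_0$ issue above is what needs attention.
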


We can also completely characterise the cofibrations, and cofibrant objects.

\begin{lem}[{\cite[Lemma 8.5, Corollary 8.5]{Ta19}}]\label{lem: cofibrations of n-homog}\label{cofibrant objects of n-homog}\hfill
\begin{enumerate}
\item A map $f\colon E \to F$ is a cofibration in the $n$--homogeneous model structure if and only if it is a projective cofibration and an $(n-1)$--polynomial equivalence. 
\item The cofibrant objects of the $n$--homogeneous model structure are precisely those $n$--reduced projectively cofibrant objects.
\end{enumerate}
\end{lem}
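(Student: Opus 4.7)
The plan is to establish part (1) first and deduce part (2) as a formal consequence by specialising to the map $\emptyset \to X$. Throughout, the key input is Proposition \ref{characterisation of acyclic fibs}, which identifies the acyclic fibrations of the $n$-homogeneous model structure with those $(n-1)$-polynomial fibrations that are $D_n$-equivalences.

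For the $(\Leftarrow)$ direction of (1), I would observe first that the cofibrations of the $(n-1)$-polynomial and projective model structures coincide, as the former is a left Bousfield localisation of the latter. Hence if $f$ is a projective cofibration and an $(n-1)$-polynomial equivalence, it is an acyclic cofibration in the $(n-1)$-polynomial model structure, and so has the left lifting property against every $(n-1)$-polynomial fibration, in particular against every $n$-homogeneous acyclic fibration. For the $(\Rightarrow)$ direction I would first show $f$ is a projective cofibration: every projective acyclic fibration is a levelwise acyclic Serre fibration, and a short verification shows such a map is both an $(n-1)$-polynomial fibration (the defining square is a homotopy pullback, as both vertical maps are levelwise equivalences) and a $D_n$-equivalence (since any levelwise equivalence is both a $T_n$- and $T_{n-1}$-equivalence, hence a $D_n$-equivalence). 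Proposition \ref{characterisation of acyclic fibs} then places such a map among the $n$-homogeneous acyclic fibrations, against which $f$ lifts.

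The principal obstacle is showing that an $n$-homogeneous cofibration $f$ is an $(n-1)$-polynomial equivalence. My plan is to exploit the cofibrantly generated nature of the $n$-homogeneous model structure: following Hirschhorn's theory of right Bousfield localisations, its generating cofibrations admit an explicit description in which each generator is by construction both a projective cofibration and an $(n-1)$-polynomial equivalence. The class of projective cofibrations that are $(n-1)$-polynomial equivalences is closed under transfinite composition, pushout and retract --- the first two because these are precisely the acyclic cofibrations of the $(n-1)$-polynomial model structure, and the last because weak equivalences are closed under retract. Invoking the small object argument, every $n$-homogeneous cofibration then lies in this class.

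For part (2), I would apply (1) to the map $\emptyset \to X$. This is a projective cofibration precisely when $X$ is projectively cofibrant, and it is an $(n-1)$-polynomial equivalence precisely when $T_{n-1}X \simeq T_{n-1}\emptyset \simeq \ast$, i.e., when $X$ is $n$-reduced. Combining these yields the stated characterisation of cofibrant objects.
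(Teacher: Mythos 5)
Your $(\Leftarrow)$ direction is correct, and your argument that an $n$--homogeneous cofibration is a projective cofibration is also sound (though there is a small misstatement: in the square defining an $(n-1)$--polynomial fibration, the maps that become levelwise equivalences when $f$ is a levelwise equivalence are the \emph{horizontal} arrows $f$ and $T_{n-1}f$, not the vertical unit maps; it is cleaner to simply note that left Bousfield localisation leaves the acyclic fibrations unchanged, so projective acyclic fibrations are automatically $(n-1)$--polynomial acyclic fibrations).

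The gap is in the part you yourself flag as the principal obstacle. You write that ``following Hirschhorn's theory of right Bousfield localisations, its generating cofibrations admit an explicit description in which each generator is by construction both a projective cofibration and an $(n-1)$--polynomial equivalence.'' This is not something Hirschhorn's theory provides. Hirschhorn's existence theorem for right Bousfield localisations (Theorem 5.1.1 in his book) establishes the model structure and its right properness, but it does \emph{not} produce a cofibrantly generated structure, and indeed right Bousfield localisations of cellular model categories are in general not cellular or cofibrantly generated. The paper is consistent with this: the $(n-1)$--polynomial model structure is stated to be cellular, but the $n$--homogeneous model structure is only stated to be a topological model structure. Even setting cofibrant generation aside, you offer no argument that any candidate generators would be $(n-1)$--polynomial equivalences, so the subsequent small-object argument rests on two unsupported premises and the proof as written does not go through.

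A way to close the gap without invoking cofibrant generation: suppose $f\colon A \to B$ is an $n$--homogeneous cofibration, hence (by the part you have established) a projective cofibration. Take $(n-1)$--polynomial levelwise-fibrant replacements and factor the induced map $\widehat{T_{n-1}A} \to \widehat{T_{n-1}B}$ as a projective acyclic cofibration $\widehat{T_{n-1}A} \to D$ followed by a levelwise fibration $p\colon D \to \widehat{T_{n-1}B}$. Then $D$ is $(n-1)$--polynomial (it is levelwise equivalent to $\widehat{T_{n-1}A}$), and $p$ is a levelwise fibration between $(n-1)$--polynomial levelwise-fibrant objects, hence an $(n-1)$--polynomial fibration between $n$--polynomial objects; by Proposition \ref{characterisation of acyclic fibs}(2) it is an $n$--homogeneous acyclic fibration. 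Lift $f$ against $p$ to obtain $\ell\colon B \to D$. In the composable chain $A \xrightarrow{f} B \xrightarrow{\ell} D \xrightarrow{p} \widehat{T_{n-1}B}$ both $\ell f$ and $p\ell$ are $(n-1)$--polynomial equivalences, so by the two-out-of-six property of weak equivalences in a model category, $f$ is an $(n-1)$--polynomial equivalence, as required. Your deduction of part (2) from part (1) is fine once part (1) is in place.
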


\subsection{The intermediate categories} Homogeneous functors of degree $n$ are characterised by spectra with an action of $G(n)$. More precisely, the $n$--homogeneous model structure on $\E_0$, and spectra with an action of $G(n)$ are Quillen equivalent via a zig--zag through (at least one) intermediate category. These intermediate categories are the natural home for the $n$-th derivative of a functor.  We give an overview of the construction of these intermediate categories and how they relate to spectra and the $n$--homogenous model structure. The length of the zig--zag is dependent on the version of calculi which one considers. We start with the details which are common to all three calculi. 

Sitting over the space of linear isometries $\J(U,V)$ is the the $n$--th complement vector bundle, with total space 
\[
\gamma_n(U,V) = \{ (f,x) \ : \ f \in \J(U,V), x \in \F^n \otimes_\F f(U)^\perp\}
\]
where we have identified the cokernel of $f$ with $f(U)^\perp$, the orthogonal compliment of $f(U)$ in $V$. This total space has an action of $\Aut(n) = \Aut(\F^n)$ via the regular representation. In the case $G(n) = C_2 \ltimes \U(n)$, the vector bundle $\gamma_n^\mathbf{R}(V,W)$ is a Real vector bundle in the sense of Atiyah \cite{At66}.

\begin{definition}\label{def: jet categories}
Define $\J_n$ to be the category with the same objects as $\J$ and morphism $G(n)$--space $\J_n(U,V)$ given by the Thom space of the vector bundle $\gamma_n(U,V)$. 
\end{definition}

 With this, we may define the intermediate categories. 

\begin{definition}\label{def: intermediate cat}
Define $\E_n$ to be the category of $\T$-enriched functors from $\J_n$ to $\T$, and define the $n$--th intermediate category $G(n)\E_n$, to be the category of $G(n)\T$--enriched functors from $\J_n$ to $G(n)\T$.
\end{definition}

Let $n\mathbb{S}$ be the functor given by $V \longmapsto S^{nV}$ where $nV := \F^n \otimes_\F V$. By \cite[Proposition 7.4]{BO13}, \cite[Proposition 4.2]{Ta19} and \cite[Proposition 4.13]{Ta20b} the intermediate categories are equivalent to the category of $n\mathbb{S}$--modules in $G(n)$--equivariant $\mathbf{F}$--spaces, where $\mathbf{F}$ is the category of finite--dimensional $\F$--inner product subspaces of $\F^\infty$ with $\F$--linear isometric isomorphisms. We equip $G(n)\E_n$ with a stable model structure, similar to the stable model structure on spectra, with weak equivalences given by $n\pi_*$-isomorphisms, i.e., $\pi_*$-isomorphisms altered to take into account the $n\mathbb{S}$-module structure. The $n$-homotopy groups for $X \in G(n)\E_n$ are defined as the colimit
\[
n\pi_k(X) = \underset{q}{\colim}~ \pi_{k+\dim(\F^n \otimes_\F \F^q)}X(\F^q),
\]
for all $k \in \Z$. For example, if $X \in \O(n)\E_n^\mathbf{O}$ then
\[
n\pi_k(X) = \underset{q}{\colim}~ \pi_{k+nq}X(\R^q),
\]
or, if $X \in \U(n)\E_n^\mathbf{U}$ the $n$-homotopy groups are given by,
\[
n\pi_k(X) = \underset{q}{\colim}~ \pi_{k+2nq}X(\C^q).
\]

\begin{prop}[{\cite[Proposition 7.4]{BO13}, \cite[Proposition 5.6]{Ta19}, \cite[Theorem 4.18]{Ta20b}}]\label{prop: n-stable model structure}
There is a cofibrantly generated, proper, and topological model structure on the category $G(n)\E_n$, where the weak equivalences are the $n\pi_*$-isomorphisms, and the fibrations are those levelwise fibrations $f\colon X \to Y$ such that the diagram
\[
\xymatrix{
X(V) \ar[r] \ar[d] & \Omega^{nW}X(V \oplus W) \ar[d] \\
Y(V) \ar[r] & \Omega^{nW}Y(V \oplus W),
}
\]
is a homotopy pullback square for all $V,W \in \J_n$. The fibrant objects of the $n$-stable model structure are called $n\Omega$-spectra and have the property that the adjoint structure maps,  $X(V) \to \Omega^{nW} X(V \oplus W)$, are levelwise weak equivalences for all $V,W \in \J_n$. We call this the $n$--stable model structure. 
\end{prop}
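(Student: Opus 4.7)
The plan is to obtain the $n$-stable model structure as a left Bousfield localization of the levelwise projective model structure on $G(n)\E_n$, following the blueprint of \cite{MMSS01} adapted to the equivariant diagram setting as in \cite{BO13, Ta19, Ta20b}. First I would set up the levelwise projective model structure: by general enriched-diagram arguments, the category $G(n)\E_n$ of $G(n)\T$-enriched functors from $\J_n$ to $G(n)\T$ admits a cellular, proper, topological model structure in which weak equivalences and fibrations are detected levelwise, with generating (acyclic) cofibrations of the form $\J_n(U,-) \wedge G(n)_+ \wedge i$ for $U \in \J_n$ and $i$ a generating (acyclic) cofibration of $\T$.

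Next I would localize at a set of stabilization maps. Writing $F_U = \J_n(U,-)$ for the represented functor, one takes
\[
S = \{ F_{U \oplus W} \wedge S^{nW} \longrightarrow F_U : U,W \in \J_n \}
\]
(after smashing with $G(n)_+$ and replacing source and target by cofibrant models) as the localizing set. Cellularity and left properness of the projective structure let Hirschhorn's localization theorem apply, producing a new model structure with the same cofibrations. The fibrant objects are by construction the levelwise fibrant $X$ whose derived mapping space out of each map in $S$ is a weak equivalence, which unwinds to the $n\Omega$-spectrum condition that $X(V) \to \Omega^{nW} X(V \oplus W)$ is a levelwise weak equivalence for all $V,W$. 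The description of fibrations between arbitrary objects as levelwise fibrations fitting into a homotopy pullback square with their fibrant replacements is then the standard Bousfield-localization characterisation.

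The key identification is that local equivalences coincide with $n\pi_*$-isomorphisms. Using the equivalence, cited just before the statement, between $G(n)\E_n$ and $n\mathbb{S}$-modules in $G(n)$-equivariant $\mathbf{F}$-spaces, one argues as for ordinary spectra: local equivalences are precisely those maps inducing isomorphisms on $[F_U \wedge S^k, -]$ in the localized homotopy category, and a direct colimit computation identifies this group with $n\pi_*$ of the functor, shifted by $\dim_\R U$. Properness and topological-ness are then inherited from the projective structure by the usual pushout/pullback arguments for stable localizations, using that $n\pi_*$ commutes with filtered colimits and with pushouts along projective cofibrations.

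The main obstacle will be the identification of local equivalences with $n\pi_*$-isomorphisms, since $G(n)\E_n$ is not literally a category of spectra but rather $n\mathbb{S}$-modules in equivariant diagram spaces; one must check that the spectrum-level stabilization argument transports faithfully across the equivalence of categories, with careful tracking of the $G(n)$-action and the $n\mathbb{S}$-module structure, and with attention to the fact that in the $\bF = \C \otimes \R$ case the nontrivial $C_2$-action on $\J^\bR_n$ replaces the purely $\U(n)$-equivariant Thom spaces by Real Thom spaces in the sense of Atiyah. Once this identification is in place, the rest of the proposition, including the pullback characterisation of fibrations, is formal.
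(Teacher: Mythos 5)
Your overall plan, namely to stabilize the levelwise projective model structure with weak equivalences the $n\pi_*$-isomorphisms and fibrant objects the $n\Omega$-spectra, matches what the cited references \cite{BO13, Ta19, Ta20b} do, following \cite[\S 9]{MMSS01}. But the identification of the $S$-local equivalences produced by Hirschhorn's theorem with the $n\pi_*$-isomorphisms is not as cheap as your sketch makes it sound, and this is where the actual work lies. The assertion that ``a direct colimit computation identifies $[F_U\wedge S^k,-]$ with $n\pi_*$'' already presupposes a concrete local fibrant replacement $Q$ with two properties: (i) $QX$ is an $n\Omega$-spectrum, and (ii) $X \to QX$ is an $n\pi_*$-isomorphism. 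Without (ii) the colimit computes $n\pi_*(QX)$, which you cannot a priori relate to $n\pi_*(X)$. You should therefore construct $QX(V) = \hocolim_W \Omega^{nW}X(V\oplus W)$ explicitly, verify (i) and (ii), and prove the companion lemma that an $n\pi_*$-isomorphism between $n\Omega$-spectra is a levelwise weak equivalence; together these three facts pin down the local equivalences. It is in this step, not in the localization machinery, that the geometry of $\J_n$ enters: the connectivity of the Thom spaces $\J_n(V,V\oplus W)$ grows with $\dim W$, and this is what forces stable equivalences to coincide with $n\pi_*$-isomorphisms (a genuinely orthogonal-flavoured fact that fails, for instance, for symmetric spectra, so it cannot be waved through). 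Your instinct to transport the argument via the equivalence with $n\mathbb{S}$-modules is correct, and the $G(n)$-equivariance, including the Real structure in the $\F = \C\otimes\R$ case, causes no extra trouble precisely because the model structure on $G(n)\T$ used here is the coarse (underlying) one; the weak equivalences and fibrations in $G(n)\E_n$ ignore fixed points, so the spectrification and companion-lemma arguments go through verbatim once the levelwise projective structure is in place.
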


In each version of the calculus, the intermediate category is Quillen equivalent to the category of orthogonal spectra with an action of $G(n)$. These Quillen equivalences are constructed differently for each of the calculi, however, the first step in the zig--zag is always similar. The first Quillen equivalence relates the intermediate category to the category $\E_1[G(n)]$. In the orthogonal and unitary cases this reduces to a Quillen equivalence between the intermediate category and the category of spectra with a $G(n)$--action, as the categories of spectra and $\E_1$ are equivalent, they are both $\mathbb{S}$--modules in the category of $\mathbf{F}$--spaces. 

There is a topological functor $\alpha_n \colon \J_n \to \J_1$, given by $\alpha_n(V) = \F^n \otimes_\F V$ on objects, and on morphism spaces
\[
\begin{split}
(\alpha_n)_{U,V} \colon \J_n(U,V) &\longrightarrow \J_1(nU, nV) \\
(f,x) &\longmapsto (\F^n \otimes_\F f, x).
\end{split}
\]
Given an object $\Theta$ in $\E_1[G(n)]$, the space $\Theta(nU) = \Theta(\F^n \otimes_\F U)$ has an internal $G(n)$--action given by the action of $G(n)$ on $\F^n$. For $g \in G(n)$, we denote this action by the self map 
\[
\Theta(g \otimes U) \colon \Theta(nU) \longrightarrow \Theta(nU).
\] 
The space $\Theta(nU)$ also has an external action given by the fact that for any $V \in \J$, $\Theta(V)$ is a $G(n)$--space. For $g \in G(n)$, we denote this action by the self map 
\[
g_{\Theta(nU)} \colon \Theta(nU) \longrightarrow \Theta(nU).
\] 
By letting $g \in G(n)$ act on $(\alpha_n)^*\Theta(U) = \Theta(nU)$ by 
\[
\Theta(g \otimes U) \circ g_{\Theta(nU)},
\]
precomposition with $\alpha_n$ defines a functor 
\[
(\alpha_n)^* \colon \E_1[G(n)] \longrightarrow G(n)\E_n,
\]
which has a left adjoint given by the enriched left Kan extension along $\alpha_n$, which can be described as the coend,
\[
((\alpha_n)_! X)(V) = \int^{U \in \J_n} X(U) \wedge \J_1(nU, V).
\]

It is convenient to have different notation for $\alpha_n$ in each of the calculi. In orthogonal calculus we denote it by $\beta_n$. 

\begin{prop}[{\cite[Theorem 8.3]{BO13}}]
There is an adjoint pair
 \[
\xymatrix@C+2cm{
\O(n)\E_n^\mathbf{O}
\ar@<+1ex>[r]^{(\beta_n)_!}
&
\s^\mathbf{O}[\O(n)]
\ar@<+1ex>[l]^{(\beta_n)^*},
}
\]
which is a Quillen equivalence between the $n$--stable model structure on $\O(n)\E_n^\mathbf{O}$ and the stable model structure on $\s^\mathbf{O}[\O(n)]$. 
\end{prop}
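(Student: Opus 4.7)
The plan is to verify this proposition in three standard steps. First, $((\beta_n)_!, (\beta_n)^*)$ is an adjoint pair by general enriched Kan extension theory: $(\beta_n)_!$ is defined as the $\T$-enriched left Kan extension along the continuous functor $\beta_n \colon \J_n^\mathbf{O} \to \J_1^\mathbf{O}$, with $(\beta_n)^*$ the right adjoint given by restriction. Second, to check that this is a Quillen pair, I would show that $(\beta_n)^*$ preserves fibrations and weak equivalences between fibrant objects. The levelwise preservation is immediate, since $\beta_n$ only relabels $V$ as $\R^n \otimes V$; for the stabilisation data, if $\Theta$ is an $\Omega$-spectrum with $\O(n)$-action then the adjoint structure maps of $(\beta_n)^*\Theta$,
\[
(\beta_n)^*\Theta(V) = \Theta(\R^n \otimes V) \longrightarrow \Omega^{nW}\Theta(\R^n \otimes (V \oplus W)),
\]
are weak equivalences by the $\Omega$-spectrum condition for $\Theta$ applied to the pair $(\R^n \otimes V, \R^n \otimes W)$, so $(\beta_n)^*\Theta$ is an $n\Omega$-spectrum.

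For the Quillen equivalence itself, I would use the criterion that $(\beta_n)^*$ reflects stable equivalences between fibrant objects and that the derived counit is a stable equivalence. The reflection property is essentially tautological from the identification
\[
n\pi_k((\beta_n)^*\Theta) = \colim_q \pi_{k+nq}\Theta(\R^n \otimes \R^q) = \colim_q \pi_{k+nq}\Theta(\R^{nq}) = \pi_k(\Theta),
\]
so $(\beta_n)^*$ detects $\pi_\ast$-isomorphisms on the nose between $\Omega$-spectra. For the derived counit $(\beta_n)_! Q (\beta_n)^* \Theta \to \Theta$, I would analyse the left Kan extension through the coend
\[
((\beta_n)_! (\beta_n)^*\Theta)(V) = \int^{U \in \J_n^\mathbf{O}} \Theta(\R^n \otimes U) \wedge \J_1^\mathbf{O}(\R^n \otimes U, V),
\]
so that the counit is precisely the natural map comparing this coend to the analogous coend over $\J_1^\mathbf{O}$, which by the co-Yoneda lemma computes $\Theta(V)$.

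The main obstacle is verifying that this comparison of coends is a stable equivalence. The essential geometric input here is a cofinality statement: every finite-dimensional real inner product space $W$ embeds into $\R^n \otimes \R^q$ once $nq \geq \dim W$, so the full subcategory of $\J_1^\mathbf{O}$ spanned by the objects of the form $\R^n \otimes V$ captures all stable homotopical information. Formally, I would verify the counit first on the free cells $\Theta = \J_1^\mathbf{O}(\R^n \otimes U_0, -) \wedge K$, which lie in the essential image of $(\beta_n)_!$ and where the counit reduces to an accessible coend calculation, and then extend to all cofibrant $\Theta$ by cellular induction along a transfinite composition of cell attachments. The $\O(n)$-equivariance is handled by arranging the cofinal embeddings compatibly with the internal and external actions set up in the construction of the intermediate category, so that each step of the cellular induction respects the combined $\O(n)$-structure.
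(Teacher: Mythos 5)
The paper does not prove this result itself; it is imported verbatim from \cite[Theorem~8.3]{BO13}. The ingredients you identify --- the enriched Kan extension adjunction, the right-Quillen verification via $\Omega$-spectra, and the cofinality computation $n\pi_k((\beta_n)^*\Theta) = \colim_q\pi_{k+nq}\Theta(\R^{nq}) \cong \pi_k(\Theta)$ --- are correct and do appear in the Barnes--Oman argument.

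However, the criterion you invoke for the Quillen equivalence does not hold. You propose to check that $(\beta_n)^*$ reflects stable equivalences between fibrant objects together with the derived \emph{counit} being a stable equivalence. That combination is insufficient: an invertible derived counit makes the derived right adjoint fully faithful, and reflection of weak equivalences makes it conservative, but neither condition nor the two together give essential surjectivity. A stable counterexample is restriction of scalars $D(\mathbb{Q}) \to D(\mathbb{Z})$ with left adjoint $\mathbb{Q}\otimes_{\mathbb{Z}}-$: the right adjoint preserves and reflects quasi-isomorphisms and the derived counit is an isomorphism, yet this is obviously not a Quillen equivalence. The correct pairing, as in \cite[Corollary~1.3.16]{Ho99}, is reflection by the right adjoint together with the derived \emph{unit}: you must check that $X \to (\beta_n)^*R(\beta_n)_!X$ is an $n\pi_*$-isomorphism for cofibrant $X$. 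Your coend and cellular-induction machinery is still the right tool once reoriented: your formula for $n\pi_*((\beta_n)^*(-))$ reduces the derived-unit question to showing $\pi_*((\beta_n)_!X)\cong n\pi_*(X)$ for cofibrant $X$, and the cellular induction over free cells $\J_n^\mathbf{O}(U,-)\wedge\O(n)_+\wedge K$ is exactly how that is established in \cite{BO13}.
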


The realification functor $r \colon \J_0^\mathbf{U} \to \J_0^\mathbf{O}$, which forgets the complex structure induces, by precomposition, a right Quillen functor between orthogonal and unitary spectra. This lifts to a right Quillen functor between orthogonal spectra with a $G(n)$--action, $\os[G(n)]$, and unitary spectra with a $G(n)$--action, $\us[G(n)]$, (see \cite[Corollary 6.5]{Ta19}). In unitary calculus we combine this Quillen pair with the Quillen pair relating the intermediate category and unitary spectra with an action of $\U(n)$ to produce the follow sequence of Quillen equivalences, which may be composed to yield a single Quillen equivalence as composition of left (resp. right) Quillen functors produces a left (resp. right) Quillen functor. 

\begin{prop}[{\cite[Theorem 5.8]{Ta19}}]\label{QE for UC}
There is a sequence of adjoint pairs
 \[
\xymatrix@C+2cm{
\U(n)\E_n^\mathbf{U}
\ar@<+1ex>[r]^{(\alpha_n)_!}
&
\s^\mathbf{U}[\U(n)]
\ar@<+1ex>[l]^{(\alpha_n)^*}
\ar@<+1ex>[r]^{r_!}
&
\s^\mathbf{O}[U(n)]
\ar@<+1ex>[l]^{r^*},
}
\]
which yields a Quillen equivalence between the $n$--stable model structure on $\U(n)\E_n^\mathbf{U}$ and the stable model structure on $\s^\mathbf{O}[\U(n)]$.
\end{prop}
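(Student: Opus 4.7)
The plan is to establish each adjunction in the zig--zag as a Quillen equivalence separately, then compose using the fact that a composite of Quillen equivalences (in the correct order of adjoints) is again a Quillen equivalence. This also delivers the desired single Quillen equivalence between the $n$-stable model structure on $\U(n)\E_n^\mathbf{U}$ and the stable model structure on $\s^\mathbf{O}[\U(n)]$, since left (respectively right) Quillen functors compose to give left (respectively right) Quillen functors.

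For the first adjunction $(\alpha_n)_! \dashv (\alpha_n)^*$, I would first verify that $(\alpha_n)^*$ is right Quillen. Since it is precomposition with the topological functor $\alpha_n \colon \J_n \to \J_1$, $V \mapsto \F^n \otimes_\F V$, together with the twist of the $\U(n)$-action coming from the internal action on $\F^n$, it automatically preserves levelwise fibrations and levelwise weak equivalences. The subtler point is that $(\alpha_n)^*$ preserves the homotopy pullback condition characterising fibrations in the $n$-stable model structure: for a fibrant $X \in \s^\mathbf{U}[\U(n)]$, one computes
\[
(\alpha_n)^* X(V) = X(nV), \qquad \Omega^{nW} (\alpha_n)^* X(V \oplus W) = \Omega^{nW} X(nV \oplus nW),
\]
so the $\Omega$-spectrum property for $X$ transfers directly to the $n\Omega$-spectrum property for $(\alpha_n)^* X$. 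To verify the Quillen equivalence, I would check that the derived unit is a weak equivalence on generating cofibrant cells $\J_n(U,-) \wedge (\U(n))_+ \wedge S^k$ (where an explicit computation identifies the coend defining $(\alpha_n)_!$ with a Thom space construction) and then extend via the cellular structure of both model categories, tracing through how the $n$-homotopy groups (colimits weighted by $\F^n \otimes_\F(-)$) align with the stable homotopy groups of unitary spectra under the adjunction.

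For the second adjunction $r_! \dashv r^*$, the realification functor $r \colon \J_0^\mathbf{U} \to \J_0^\mathbf{O}$ sends a complex inner product space to its underlying real inner product space, so $(r^*X)(V) = X(rV)$. Since $r$ respects direct sums, $r^*$ preserves both levelwise fibrations and the $\Omega$-spectrum condition (using $\Omega^{rW}X(rV \oplus rW) \simeq X(rV)$), hence is right Quillen. That this is a Quillen equivalence, as established in \cite[Corollary 6.5]{Ta19}, reflects that although $\s^\mathbf{U}$ and $\s^\mathbf{O}$ are not Quillen equivalent in isolation, the $\U(n)$-action is strong enough to detect the complex structure: complex subspaces are cofinal among real subspaces for the purposes of computing stable homotopy groups of $\U(n)$-orthogonal spectra, so $r^*$ detects and reflects $\pi_*$-isomorphisms between fibrant objects.

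The main obstacle I anticipate is verifying that the derived unit of the first adjunction is a weak equivalence, since the $n$-homotopy groups involve a non--standard grading weighted by the complex regular representation $\F^n$. A careful analysis is required to show that the natural map from the $n$-homotopy groups of a cofibrant $X \in \U(n)\E_n^\mathbf{U}$ to the stable homotopy groups of $(\alpha_n)_! X$ is an isomorphism; this is essentially a Thom--isomorphism style computation combined with cofinality for the colimit system. Once both Quillen equivalences are in place, composing them yields the sequence of adjoint pairs and the single Quillen equivalence of the statement.
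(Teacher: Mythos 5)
Your overall strategy — establish each of the two adjunctions as a Quillen equivalence and then compose — is exactly what the paper does: the paragraph immediately preceding the statement says explicitly that the zig--zag "may be composed to yield a single Quillen equivalence as composition of left (resp.\ right) Quillen functors produces a left (resp.\ right) Quillen functor." Your sketch of why $(\alpha_n)^*$ and $r^*$ are right Quillen, and of where the real work lies (the derived unit of $(\alpha_n)_! \dashv (\alpha_n)^*$ via a Thom--space style computation aligning the $n\pi_*$--grading with ordinary stable homotopy groups), is at the right level and points at the correct obstacle; this is indeed the content of \cite[Theorem 5.8]{Ta19}.

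One genuine error in your reasoning about the second adjunction: you assert that $\s^\mathbf{U}$ and $\s^\mathbf{O}$ "are not Quillen equivalent in isolation" and that the $\U(n)$--action is what makes the comparison work. This is backwards. Unitary spectra and orthogonal spectra are both models for the (nonequivariant) stable homotopy category, and the realification adjunction $r_! \dashv r^*$ is already a Quillen equivalence before any group action is introduced — this is precisely what \cite[Corollary 6.5]{Ta19} (and the surrounding material) establishes. The role of the $\U(n)$--action is entirely passive here: the stable model structure on $\s[G]$ for any compact Lie group $G$ is transferred from the underlying non-equivariant model structure (weak equivalences and fibrations are created by forgetting the $G$--action), so a Quillen equivalence on underlying categories lifts automatically to $G$--objects. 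Framing the realification comparison as depending on $\U(n)$ "detecting the complex structure" misattributes where the argument's strength comes from, and if you tried to make that heuristic precise you would find it does not hold up; the correct mechanism is simply transfer of an underlying Quillen equivalence to $G$--objects.
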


The added $C_2$--equivariance involved in the calculus with reality makes it convenient to have an extra step in the zig--zag of Quillen equivalences, which does not compose to give a single Quillen pair. The first step in the Quillen equivalence is similar to the orthogonal case, and relates the intermediate category and the category of Real $\U(n)$--spectra, denoted, $\CE_1[\U(n)]$, see \cite[\S\S5.2]{Ta20b}.

The second step in the zig--zag is motivated by Schwede's description of Real spectra, and the relationship between Real spectra and orthogonal spectra, \cite[Example 7.11]{Sch19}. There is a $C_2$-equivariant decomposition $\C^n \cong \R^n \oplus i\R^n$, where $i$ denotes the imaginary unit. It follows that $V_\C = V \otimes \C$ may be decomposed up to natural isomorphism as $V \oplus iV$, where $C_2$ acts on $iV$ via the sign representation. We denote by $S^{iV}$ the one-point compactification of $iV$, with the induced $C_2$-action. There is a functor $\psi : \CE_1[\U(n)] \to \os[C_2 \ltimes \U(n)]$ is given by 
\[
\psi(X)(V) = \Map_*(S^{iV}, X(V_\C)),
\]
which has left adjoint $L_\psi: \os[C_2 \ltimes \U(n)] \to \CE_1[\U(n)]$, given by the enriched coend 
\[
L_\psi(Y)(V_\C) =  \int^{U \in \J_1^\mathbf{O}} \CJ_1(U_\C, V_\C) \wedge Y(U) \wedge S^{iU}.
\]

\begin{prop}[{\cite[Theorem 5.2, Theorem 5.8]{Ta20b}}]
There is a sequence of adjoint pairs,
\[
\xymatrix@C+2cm{
C_2 \ltimes \U(n)\E_n^\mathbf{R}
\ar@<+1ex>[r]^{(\xi_n)_!}
&
\CE_1[\U(n)]
\ar@<+1ex>[l]^{(\xi_n)^*}
\ar@<-1ex>[r]_{\psi}
&
\s^\mathbf{O}[C_2 \ltimes \U(n)]
\ar@<-1ex>[l]_{L_\psi},
}
\]
which yields a zig--zag of Quillen equivalences between the $n$--stable model structure on the intermediate category $C_2 \ltimes \U(n)\E_n^\mathbf{R}$, and the stable model structure on $\s^\mathbf{O}[C_2 \ltimes \U(n)]$.
\end{prop}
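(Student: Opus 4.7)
My plan is to establish the two Quillen pairs independently, since they face in opposite directions and do not compose to a single adjoint pair. In each case the standard two--step strategy applies: first verify that the right adjoint is a right Quillen functor for the relevant model structures, and then show that the derived unit (or counit) is a weak equivalence on cofibrant objects, arguing by transfinite induction from a set of generators.

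For the first adjunction $(\xi_n)_! \dashv (\xi_n)^*$, the argument runs parallel to the orthogonal case \cite[Theorem 8.3]{BO13} and the unitary case \cite[Theorem 5.8]{Ta19}, with added tracking of the $C_2$--equivariance. The topological functor $\xi_n \colon \J_n^\mathbf{R} \to \CJ_1$ is the $C_2\T$--enriched analogue of $\alpha_n$, sending $V_\C$ to $\C^n \otimes V$ on objects and $(f,x)$ to $(\C^n \otimes f, x)$ on morphism spaces. The right adjoint $(\xi_n)^*$, suitably twisted by the internal and external $C_2 \ltimes \U(n)$--actions exactly as in the description of $(\alpha_n)^*$ above, preserves levelwise fibrations and the homotopy--pullback squares defining fibrations of the $n$--stable model structure, hence is right Quillen. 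For Quillen equivalence I would verify that the derived unit on the free generator $\CJ_1(V_\C, -) \wedge S^{iV}$ is a weak equivalence via a direct computation of the defining coend, and then transfer along cell attachments using the standard machinery for cofibrantly generated model categories.

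For the second pair $L_\psi \dashv \psi$, motivated by Schwede's description of Real spectra \cite[Example 7.11]{Sch19}, the right adjoint $\psi(X)(V) = \Map_*(S^{iV}, X(V_\C))$ converts the Real structure on $X$ into a genuine orthogonal $C_2 \ltimes \U(n)$--structure, with $C_2$ acting diagonally by complex conjugation on $V_\C$ and on the sphere $S^{iV}$. Preservation of levelwise weak equivalences and fibrations is immediate from the adjunction $\Sigma^{iV} \dashv \Omega^{iV}$, and preservation of stable fibrations reduces to checking that a square in $\CE_1[\U(n)]$ is a homotopy pullback precisely when its image under $\psi$ is a homotopy pullback in $\os[C_2 \ltimes \U(n)]$, because $\psi$ reindexes rather than alters the structure maps. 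I expect the main obstacle to be verifying that the derived counit $L_\psi \psi Y \to Y$ is a stable equivalence for cofibrant orthogonal $C_2 \ltimes \U(n)$--spectra $Y$: this requires unwinding the coend defining $L_\psi$ and showing that smashing with $S^{iU}$ cancels the $\Omega^{iV}$ built into $\psi$ up to stable weak equivalence, which is essentially the fact that $\Sigma^{iV}\Omega^{iV} \simeq \id$ on stably fibrant objects, but the simultaneous bookkeeping of the $C_2$--action via complex conjugation and the $\U(n)$--action via the regular representation of $\C^n$ will require careful attention.
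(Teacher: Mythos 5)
This proposition is imported from \cite[Theorems 5.2, 5.8]{Ta20b}; the present paper states it with a citation but does not reproduce the proof, so there is no in-paper argument to compare your sketch against directly. At the level of strategy your outline is consistent with the paper's framing: the first adjunction $(\xi_n)_! \dashv (\xi_n)^*$ is the Real analogue of the orthogonal \cite[Theorem 8.3]{BO13} and unitary \cite[Theorem 5.8]{Ta19} comparisons (with $(\xi_n)^*$ twisted by the internal and external $C_2\ltimes\U(n)$--actions exactly as $(\alpha_n)^*$ is), and the second pair $(L_\psi,\psi)$ is modelled on Schwede's account of Real spectra, with $\psi$ and $L_\psi$ exactly as you recall.

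One correction is needed. You propose to verify that ``the derived counit $L_\psi\psi Y \to Y$ is a stable equivalence for cofibrant orthogonal $C_2 \ltimes \U(n)$--spectra $Y$,'' but this does not typecheck: the right adjoint $\psi$ has domain $\CE_1[\U(n)]$ and codomain $\os[C_2\ltimes\U(n)]$, so $\psi Y$ is undefined when $Y\in\os[C_2\ltimes\U(n)]$. The counit $L_\psi\psi\Rightarrow\id$ is an endotransformation of $\CE_1[\U(n)]$ and must be checked on fibrant objects there; for cofibrant $Y\in\os[C_2\ltimes\U(n)]$ the relevant map is the derived unit $Y\to\psi\fibrep L_\psi Y$. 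The loop--suspension cancellation you sketch, pairing the $S^{iU}$ in the coend defining $L_\psi$ against the $\Omega^{iV}$ in $\psi$, is the right idea in either direction, but you should be explicit about which composite you are actually computing and on which side cofibrancy or fibrancy is being imposed, since the two checks unwind to slightly different coends.
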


\subsection{The derivatives of a functor} We now move on to discussing the derivatives of a functor. The derivatives are naturally objects in $G(n)\E_n$. Their definition comes from constructing an adjunction between $\E_0$ and $G(n)\E_n$. The inclusion $\F^m \to \F^n$ onto the first $m$-coordinates induces an inclusion of categories, $i_m^n \colon \J_m \to \J_n$. 

\begin{definition}\label{def: derivative}
Define the \textit{restriction functor} $\res_0^n \colon \E_n \to \E_m$ to be precomposition with $i_m^n$, and define the \textit{induction functor} $\ind_m^n \colon \E_m \to \E_n$ to be the right Kan extension along $i_m^n$. In the case $m=0$, the induction functor $\ind_0^n$ is called the \textit{$n$-th derivative}. 
\end{definition}

Combining this adjunction with the change of group functors from \cite[V.2]{MM02} provides an adjunction
\[
\adjunction{\res_0^n/\Aut(n)}{G(n)\E_n}{\E_0}{\ind_0^n \varepsilon^*}.
\]

This adjunction is a Quillen equivalence between the $n$-homogeneous model structure on $\E_0$ and the $n$-stable model structure on $G(n)\E_n$. 
\begin{prop}[{\cite[Theorem 10.1]{BO13}, \cite[Theorem 6.5]{Ta19}, \cite[Theorem 6.8]{Ta20b}}]
The adjoint pair
\[
\adjunction{\res_0^n/\Aut(n)}{G(n)\E_n}{n\homog\E_0}{\ind_0^n\varepsilon^*}
\]
is a Quillen equivalence between the $n$--stable model structure on $G(n)\E_n$ and the $n$--homogeneous model structure on $\E_0$. 
\end{prop}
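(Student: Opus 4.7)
The plan is to verify the statement in two steps: first establish the Quillen adjunction, then check that the derived unit and counit are weak equivalences. The argument is uniform across the three calculi, provided one carefully tracks the equivariance on the complement bundle $\gamma_n$.

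For the Quillen pair, I would show that the left adjoint $\res_0^n/\Aut(n)$ preserves cofibrations and acyclic cofibrations. By Lemma~\ref{lem: cofibrations of n-homog}, the cofibrations of $n\homog\E_0$ are precisely the projective cofibrations which are also $(n-1)$-polynomial equivalences, so two conditions must be checked. Preservation of projective cofibrations is essentially formal: $\res_0^n$ is left adjoint to the right Kan extension $\ind_0^n$, hence a left Quillen functor between projective model structures, and $/\Aut(n)$ is a left Quillen orbit functor between the relevant equivariant projective structures. The $(n-1)$-polynomial equivalence condition reduces, on the generating cofibrations of the $n$-stable model structure, to showing that functors of the form $\J_n(U,-)/\Aut(n)$ have trivial $(n-1)$-polynomial approximation after restriction to $\J_0$. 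This follows because the Thom spaces of the complement bundles $\gamma_n(U,-)$ carry a free $\Aut(n)$-action on fibres, forcing the relevant $\tau_{n-1}$-terms to vanish.

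For the Quillen equivalence, the key computation is the derived counit: given a fibrant $Y \in n\homog\E_0$ (so $Y$ is $n$-polynomial), one must show that
\[
(\res_0^n(\ind_0^n\varepsilon^* Y)^{\mathrm{cof}})/\Aut(n) \longrightarrow Y
\]
is a $D_n$-equivalence. The lemma to extract here, by unfolding the right Kan extension defining $\ind_0^n$ and tracking the $G(n)$-equivariance throughout, is that $(\res_0^n\ind_0^n\varepsilon^* Y)/\Aut(n)$ is naturally weakly equivalent to $D_n Y$; the counit is then the universal map $D_n Y \to Y$, which is a $D_n$-equivalence. The derived unit for cofibrant $X \in G(n)\E_n$ is handled dually: the $n$-th derivative of the $n$-homogeneous functor associated to $X$ must recover $X$ up to $n\pi_*$-isomorphism, amounting to the statement that $\ind_0^n\varepsilon^*$ applied to such a functor is, after fibrant replacement, an $n\Omega$-spectrum.

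I expect the main obstacle to be the identification $(\res_0^n\ind_0^n\varepsilon^* Y)/\Aut(n) \simeq D_n Y$ with the correct equivariance. In the calculus with reality, where $G(n) = C_2\ltimes \U(n)$, one must distinguish the internal $C_2$-action from complex conjugation on $\C^n$, which acts on the Real bundle $\gamma_n^\mathbf{R}$, from the external $C_2$-action carried by values of functors in $\CE_0$, and then verify that the orbit construction correctly produces homotopy orbits with respect to the full semidirect product group. Once this equivariant identification is in place, the remainder of the proof proceeds uniformly across all three calculi.
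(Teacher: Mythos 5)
Note first that the paper does not prove this proposition: it is recalled from \cite{BO13}, \cite{Ta19}, and \cite{Ta20b} and is used as a black box throughout, so there is no in-paper proof to compare against. Evaluating your sketch on its own terms, the overall shape --- establish the Quillen pair, then compute derived unit and counit --- is correct, but two steps are asserted rather than justified, and both matter.

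First, your claim that $\res_0^n$ is ``a left Quillen functor between projective model structures'' because it is ``left adjoint to the right Kan extension $\ind_0^n$'' is not valid reasoning: being a left adjoint does not confer left Quillen--ness. What must actually be shown is that the right Kan extension $\ind_0^n$ preserves levelwise fibrations and acyclic fibrations; this follows from writing $\ind_0^n F(V)$ as the enriched end $\nat(\J_n(V,-), F)$ and using cofibrancy of the representables $\J_n(V,-)$, or, as in the cited references, by checking the left adjoint directly on the generating cofibrations $\J_n(U,-)\wedge G(n)_+\wedge i$ of the $n$--stable structure together with cofibrancy of $\J_n(U,-)/\Aut(n)$ in $\E_0$. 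The conclusion is true but the reason you give for it is not.

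Second, and more seriously, the identification of $(\res_0^n\ind_0^n\varepsilon^* Y)/\Aut(n)$ (derived) with $D_n Y$ is presented as something obtained ``by unfolding the right Kan extension and tracking the $G(n)$--equivariance.'' It is not a formal unravelling: it is precisely Weiss's classification of $n$--homogeneous functors, recorded as Proposition~\ref{homog characterisation} in the paper, which says that any fibrant object of the $n$--homogeneous model structure is levelwise equivalent to $U\mapsto \Omega^\infty[(S^{nU}\wedge\Psi_Y^n)_{h\Aut(n)}]$. That theorem is the engine of the proof; the verification that the derived counit is a $D_n$--equivalence and the derived unit an $n\pi_*$--isomorphism are consequences of it, not corollaries of a definition chase. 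Your sketch should invoke the classification theorem explicitly as the nontrivial input, rather than treating the identification as a lemma ``to extract.'' Once that is done, the remaining remarks on tracking the internal versus external $C_2$--action for $G(n) = C_2\ltimes\U(n)$ are sensible and do address the genuinely new equivariant point in the ``with reality'' case.
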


\subsection{Classification of $n$-homogeneous functors}\label{subsect: class of homog}
For a functor $F \in \E_0$, denote by $\Psi_F^n$ the spectrum with an action of $G(n)$, which is the derived image of $F$ under the zig--zag of Quillen equivalences between the $n$--homogeneous model structure on $\E_0$ and spectra with an action of $G(n)$. These zig--zags are slightly different for each version of the calculi. For the orthogonal calculus we achieve the following
\[
\xymatrix@C+2cm{
n \homog \E_0^\mathbf{O}
\ar@<-1 ex>[r]_(0.6){\ind_0^n \varepsilon^*}
&
\O(n)\E_n^\mathbf{O}
\ar@<-1ex>[l]_(0.4){\res_0^n/\O(n)}
\ar@<+1ex>[r]^{(\beta_n)_!}
&
\s^\mathbf{O}[\O(n)]
\ar@<+1ex>[l]^{(\beta_n)^*}.
}
\]
For unitary calculus, the zig--zag is given as follows
\[
\xymatrix@C+2cm{
n \homog \E_0^\mathbf{U}
\ar@<-1 ex>[r]_(0.6){\ind_0^n \varepsilon^*}
&
\U(n)\E_n^\mathbf{U}
\ar@<-1ex>[l]_(0.4){\res_0^n/\U(n)}
\ar@<+1ex>[r]^{(\alpha_n \circ r)_!}
&
\us[\U(n)]
\ar@<+1ex>[l]^{(\alpha_n \circ r)^*}.
}
\]
Finally, the calculus with reality is given as follows
\[
\xymatrix@C+.55cm{
n \homog \CE_0
\ar@<-1 ex>[r]_(0.6){\ind_0^n \varepsilon^*}
&
C_2 \ltimes \U(n)\E_n^\mathbf{R}
\ar@<-1ex>[l]_(0.4){\res_0^n/\U(n)}
\ar@<+1ex>[r]^{(\xi_n)_!}
&
\CE_1[\U(n)]
\ar@<+1ex>[l]^{(\xi_n)^*}
\ar@<-1ex>[r]_{\psi}
&
\s^\mathbf{O}[C_2 \ltimes \U(n)]
\ar@<-1ex>[l]_{L_\psi}.
}
\]

\begin{prop}[{\cite[Theorem 7.3]{We95},\cite[Theorem 7.1]{Ta19}, \cite[Theorem 7.1]{Ta20b}}]\label{homog characterisation}
If $F \in \E_0$ is $n$-homogeneous for some $n >0$, then $F$ is levelwise weakly equivalent to the functor defined as 
\begin{equation*}\label{char of homog functors}
U \longmapsto \Omega^\infty [(S^{nU} \wedge \Psi_F^n)_{h\Aut(n)}],
\end{equation*}
where $\Aut(n) = \Aut_\J(\F^n)$, and $\Psi_F^n$ is a spectrum with an action of $G(n)$ defined as the derived image of $F$ under the zig-zag of Quillen equivalences. Conversely, any functor of this form is $n$--homogeneous.
\end{prop}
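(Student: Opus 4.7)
My approach would be to establish both implications of the classification by passing through the zig--zag of Quillen equivalences between $n\homog\E_0$ and the category of spectra with $G(n)$--action described in Subsection \ref{subsect: class of homog}, using the intermediate category $G(n)\E_n$ as the principal pivot. Since $\Psi_F^n$ is, by definition, the derived image of $F$ under this zig--zag, the content of the theorem is to identify the functor represented---after pulling back through the chain---by a fibrant $G(n)$--spectrum with the explicit formula $U \mapsto \Omega^\infty[(S^{nU} \wedge \Psi_F^n)_{h\Aut(n)}]$.

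I would first treat the converse, which is the more concrete half. Given a spectrum $\Psi$ with $G(n)$--action, set $E_\Psi(V) := \Omega^\infty[(S^{nV} \wedge \Psi)_{h\Aut(n)}]$; I would show that $E_\Psi$ is $n$--polynomial and has trivial $(n-1)$--polynomial approximation. The key input is that the functor $V \mapsto S^{nV} \wedge \Psi$ is already $n$--polynomial at the spectrum level, which can be verified by a cross--effect/cofibre sequence argument on iterated fibres of $S^{nV} \to S^{n(V \oplus U)}$ as in Weiss's original polynomial computation in \cite{We95}. Applying $(-)_{h\Aut(n)}$ and then $\Omega^\infty$ preserves both the $n$--polynomial property and triviality of $T_{n-1}$, the latter because $\tau_{n-1}^k(V \mapsto S^{nV})$ becomes arbitrarily highly connected as $k$ grows: each iterate strictly lowers the top--degree contribution of the representation sphere, and $\Omega^\infty$ of a colimit of increasingly highly connected spectra is contractible.

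For the forward direction, take an $n$--homogeneous $F$, cofibrantly replace it, and apply the derivative Quillen equivalence
\[
\adjunction{\res_0^n/\Aut(n)}{G(n)\E_n}{n\homog\E_0}{\ind_0^n \varepsilon^*}
\]
to produce an $n\Omega$--spectrum $\Theta \in G(n)\E_n$ with $F \simeq \ind_0^n \varepsilon^*\Theta$. The spectrum $\Psi_F^n$ is then the image of $\Theta$ under the remaining Quillen equivalences of Subsection \ref{subsect: class of homog}; on the right--adjoint side, this reads $\Theta(V) \simeq \Omega^\infty(S^{nV} \wedge \Psi_F^n)$ with the correct combination of internal and external $G(n)$--actions. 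The induction $\ind_0^n$ is a right Kan extension, which at $V \in \J_0$ computes as $\nat(\J_n(V,-), \Theta(-))$; the role of $\varepsilon^*$ is to convert the external action of $\Aut(n)$ on $\Theta(V)$ into an internal one on $\F^n$, so that after identifying the relevant Thom spaces $\J_n(0,V) \simeq S^{nV}$ with the representation sphere of the regular action, the composite simplifies to exactly the homotopy--orbit formula claimed.

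The principal obstacle I anticipate is precisely this final reduction: showing that $\ind_0^n \varepsilon^*$ applied to a fibrant $\Theta$ yields a \emph{homotopy orbit} rather than, say, homotopy fixed points or an untwisted smash product. This hinges on the interplay between the internal regular representation on $\F^n$ in $\J_n$ and the external $\Aut(n)$--action on $\Theta(V)$; the twist introduced by $\varepsilon^*$ is exactly what forces the combined action to coincide, up to weak equivalence, with the $\Aut(n)$--action used to form homotopy orbits. A secondary subtlety, specific to the Real case $\F = \C \otimes \R$, is to keep track of the $C_2$--twist introduced by $\psi$ and $L_\psi$ in the passage to $\s^{\mathbf{O}}[C_2 \ltimes \U(n)]$, ensuring it combines correctly with the internal $\U(n)$--action so that the resulting formula reflects the derived image $\Psi_F^n$ faithfully.
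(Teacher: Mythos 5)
The paper does not supply its own proof of this proposition; it is cited from \cite{We95}, \cite{Ta19} and \cite{Ta20b}, and those sources do proceed through the Quillen equivalences you indicate, so your strategy is the right one at the top level. Your outline of the converse implication (show the explicit formula is $n$--polynomial with trivial $T_{n-1}$, using a connectivity argument) also matches the standard argument in spirit, though the phrase ``$n$--polynomial at the spectrum level'' is not a notion the framework defines; in the cited proofs this step is run using the sphere--bundle characterisation of $n$--polynomial functors rather than an iterated--fibre cross--effect.

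The genuine gap is in the forward direction, and it is directional. You correctly set $\Theta$ to be a fibrant replacement of $\ind_0^n\varepsilon^* F$ (the right adjoint), but you then try to recover $F$ from $\Theta$ by computing $\ind_0^n$ again as the end $\nat(\J_n(V,-),\Theta(-))$. That is the wrong adjoint. You need the derived \emph{left} adjoint: $F(V)\simeq\bigl(\res_0^n\,\cofrep\Theta\bigr)(V)/\Aut(n) = (\cofrep\Theta)(V)/\Aut(n)$. An end is a limit and naturally yields a homotopy fixed--point or mapping object, which is exactly why you (rightly) worry that your formula wants to be a fixed point rather than an orbit; the resolution is not to massage the end into an orbit but to use the other adjoint, where the strict quotient of a cofibrant $\Theta$ by its free $\Aut(n)$--action already models the homotopy orbit. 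Even after that correction, two substantive lemmas are asserted but not proved: first, that for cofibrant--fibrant $\Theta\in G(n)\E_n$ there is an $\Aut(n)$--equivariant equivalence $\Theta(V)\simeq\Omega^\infty(S^{nV}\wedge\Psi)$ with $\Psi$ the derived image of $\Theta$ under $(\alpha_n)_!$, $r_!$ and, in the Real case, $\psi$ and $L_\psi$, with the internal and external $G(n)$--actions combining correctly; and second, that $\Omega^\infty(S^{nV}\wedge\Psi)_{h\Aut(n)}$ agrees with $\Omega^\infty\bigl[(S^{nV}\wedge\Psi)_{h\Aut(n)}\bigr]$, which is false for general spectra with a group action and requires the connectivity and freeness arguments of the cited sources. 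Your proposal treats both as bookkeeping, but they are the actual content of the theorem.
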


The characterisation allows for a different description of the weak equivalences in the $n$--homogeneous model structure. Such a description will be useful for the recovery of unitary calculus from the calculus with reality.

\begin{prop}[{\cite[Proposition 8.2]{Ta19}}]\label{prop: we in n--homog}
A map $f \colon E \to F$ in $\E_0$ is a $D_n$--equivalence if and only if 
\[
\ind_0^n T_nf \colon \ind_0^n T_n E \longrightarrow \ind_0^n T_n F,
\]
is a levelwise weak equivalence. 
\end{prop}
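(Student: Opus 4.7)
The plan is to reduce the statement to the Quillen equivalence
\[
\adjunction{\res_0^n/\Aut(n)}{G(n)\E_n}{n\homog\E_0}{\ind_0^n\varepsilon^*}
\]
between the $n$--homogeneous model structure and the $n$--stable model structure. In any Quillen equivalence, a map in the source is a weak equivalence if and only if the derived right adjoint sends it to a weak equivalence in the target. Applying this principle, $f \colon E \to F$ is a $D_n$--equivalence if and only if $\mathbb{R}(\ind_0^n\varepsilon^*)(f)$ is an $n\pi_*$--isomorphism in $G(n)\E_n$.

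The next step is to identify the derived right adjoint explicitly. Since fibrant objects in $n\homog\E_0$ are precisely the $n$--polynomial functors and $T_n F$ is $n$--polynomial, I would show that the natural transformation $\eta \colon F \to T_n F$ is a $D_n$--equivalence. This follows from two standard facts about the Taylor tower: first, $T_n T_n F \simeq T_n F$ because $T_n$ is universal among $n$--polynomial approximations; second, $T_{n-1} F \to T_{n-1} T_n F$ is an equivalence because $T_{n-1}$ factors through $T_n$ when applied to $F$. Combining these gives $D_n F \simeq D_n T_n F$, so $\eta$ is a $D_n$--equivalence and $T_n$ serves as a fibrant replacement in the $n$--homogeneous model structure. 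Consequently $\mathbb{R}(\ind_0^n\varepsilon^*)(f) \simeq \ind_0^n \varepsilon^* T_n f$.

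The core technical step, which I expect to be the main obstacle, is to verify that $\ind_0^n$ carries $n$--polynomial functors to $n\Omega$--spectra in $G(n)\E_n$. Using the right Kan extension formula for $\ind_0^n$ along $i_0^n \colon \J_0 \to \J_n$, the adjoint structure map $\ind_0^n G (V) \to \Omega^{nW}\ind_0^n G(V\oplus W)$ can be rewritten as a homotopy limit built from values of $G$ on spaces of the form $U \oplus V \oplus W$. Matching this against the defining condition $G(V) \xrightarrow{\sim} \underset{U \in \mathbf{F}_{\leq n+1}}{\holim} G(U \oplus V)$ for $G$ to be $n$--polynomial produces the required weak equivalence, exhibiting $\ind_0^n T_n E$ and $\ind_0^n T_n F$ as fibrant in the $n$--stable model structure.

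Once both sides are known to be $n\Omega$--spectra, the $n\pi_*$--isomorphism condition on $\ind_0^n \varepsilon^* T_n f$ is equivalent to being a levelwise weak equivalence, by the standard Whitehead--type argument between stably fibrant objects. Finally, $\varepsilon^*$ merely forgets the $G(n)$--action and weak equivalences in $G(n)\T$ are detected on underlying non--equivariant spaces, so levelwise weak equivalence of $\ind_0^n \varepsilon^* T_n f$ is equivalent to levelwise weak equivalence of $\ind_0^n T_n f$. Chaining these equivalences yields the claimed characterisation of $D_n$--equivalences.
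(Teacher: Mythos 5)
Your reduction to the Quillen equivalence is the right framework and, as far as I can tell, matches the spirit of the cited proof. A few corrections and simplifications are in order.

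First, the step you flag as the ``core technical obstacle''---that $\ind_0^n$ carries $n$--polynomial functors to $n\Omega$--spectra---is formal once the Quillen equivalence is available as a black box, and the direct verification you sketch is not needed (and likely would not go through as described). The fibrant objects of the $n$--homogeneous model structure coincide with those of the $n$--polynomial model structure (a general feature of right Bousfield localisation), so $T_nE$ and $T_nF$ are fibrant in $n\homog\E_0$. Since $\ind_0^n\varepsilon^*$ is the right adjoint of a Quillen pair, it preserves fibrant objects, and therefore $\ind_0^n\varepsilon^*T_nE$ and $\ind_0^n\varepsilon^*T_nF$ are fibrant in the $n$--stable model structure, i.e.\ $n\Omega$--spectra. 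As for your sketch: the right Kan extension expresses $\ind_0^n G(V)$ as the space of natural transformations $\nat(\J_n(V,-),G)$, while the polynomial condition is a homotopy limit over the poset $\mathbf{F}_{\leq n+1}$; these do not line up by a straightforward rewriting, and the corresponding statement in Weiss's original development (Theorem~5.3 of his paper) requires a genuinely nontrivial induction. Fortunately you do not need it here.

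Second, your argument that $\eta\colon F \to T_nF$ is a $D_n$--equivalence, via $T_nT_nF\simeq T_nF$ and $T_{n-1}F\to T_{n-1}T_nF$, can be replaced by the observation that $\eta$ is already a weak equivalence in the $n$--polynomial model structure, and the $n$--homogeneous model structure, being a right Bousfield localisation of the $n$--polynomial one, has a larger class of weak equivalences.

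Third, a wording point: $\varepsilon^*$ does not forget the $G(n)$--action; it \emph{inflates} a non-equivariant functor along $\varepsilon\colon G(n)\to\{e\}$, equipping it with the trivial action (the interesting $G(n)$--action on $\ind_0^n\varepsilon^*T_nF$ comes from the $G(n)$--action on the morphism spaces $\J_n(V,W)$). The conclusion you want---that $\ind_0^n\varepsilon^*T_nf$ is a levelwise weak equivalence if and only if $\ind_0^nT_nf$ is---does hold, but the correct justification is that the model structure used on $G(n)\T$ is the coarse one, so weak equivalences in $G(n)\E_n$ are detected on underlying spaces. With these adjustments the chain of equivalences is complete.
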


\section{Recovering the input functors}\label{section: input functors}

Our recovery efforts begin with the input functors for the calculi. We show that starting with a functor with reality, we can produce a unitary functor indexed on the universe $\C \otimes \R^\infty$, and further, every unitary functor is (up to equivalence of categories) of this form.

\subsection{Forgetting the $C_2$--action}

Forgetting the $C_2$--action on $\CE_0$ through the inclusion of the trivial subgroup $i: \{e\} \to C_2$, defines a functor 
\[
i^* : \CE_0 \longrightarrow \RE_0, F \longmapsto i^* F,
\]
where $(i^*F)(V) = i^*F(V)$ is the underlying space of $F(V)$. This functor has a left adjoint given by the free--construction
\[
\xymatrix@C+2cm{
\CE_0 \ar@<-1ex>[r]_{i^*} & \RE_0 \ar@<-1ex>[l]_{(C_2)_+ \wedge (-)}. \\
}
\]

The category $\RE_0$ can be viewed as an intermediate category between the input category of calculus with reality and the input category for unitary calculus. We shall see throughout this paper that one can build unitary calculus on $\RE_0$, and that this theory is equivalent to the unitary calculus built on $\E_0^\bU$. The above adjunction forms one half of the relationship between $\CE_0$ and $\E_0^\bU$, and moreover, the above adjunction is a Quillen adjunction with respect to the projective model structures.

\begin{prop}\label{prop: QA for forget on input}
The adjoint pair
\[
\xymatrix@C+2cm{
\CE_0 \ar@<-1ex>[r]_{i^*} & \RE_0 \ar@<-1ex>[l]_{(C_2)_+ \wedge (-)}, \\
}
\]
is Quillen adjunction between the projective model structure on $\CE_0$ and the projective model structure on $\RE_0$.
\end{prop}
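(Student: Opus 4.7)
The plan is to verify the Quillen adjunction by showing that the right adjoint $i^*$ preserves both fibrations and acyclic fibrations. Since the projective model structures on $\CE_0$ and $\RE_0$ are defined levelwise, the argument reduces to a statement about the underlying enriching categories: fibrations and weak equivalences in $\CE_0$ are, by definition, maps whose evaluation at each $V \in \CJ_0$ is a fibration, respectively weak equivalence, in $C_2\T$ with the coarse model structure. By the description of the coarse model structure recalled just before Proposition~2.4 of the excerpt, these are precisely the maps whose underlying non-equivariant maps are Serre fibrations, respectively weak homotopy equivalences. So $i^*$ preserves both classes levelwise, and hence preserves fibrations and acyclic fibrations when viewed as a functor $\CE_0 \to \RE_0$.

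As a sanity check, I would also verify the dual statement on generators for the left adjoint $(C_2)_+ \wedge (-)$. A generating (acyclic) cofibration of $\RE_0$ has the form $\J_0(U,-) \wedge i$ for $i$ a generating (acyclic) cofibration in $\T$. Smashing with $(C_2)_+$ commutes with the tensoring $\J_0(U,-) \wedge (-)$ (since the smash product is taken over unpointed spaces and $\CJ_0(U,V)$ carries its own $C_2$-action independently), so the image is
\[
(C_2)_+ \wedge \J_0(U,-) \wedge i \;=\; \CJ_0(U,-) \wedge (C_2)_+ \wedge i,
\]
which is exactly the shape of a generating (acyclic) cofibration in $\CE_0$. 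This confirms the Quillen condition from the cofibration side.

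There is no real obstacle: the two model structures are transferred from the coarse model structure on $C_2\T$ and the Quillen model structure on $\T$ via the same diagram-category construction, and the $(C_2)_+ \wedge (-) \dashv i^*$ adjunction already lies at the level of these enriching categories as a Quillen pair. Applying it levelwise is automatic.
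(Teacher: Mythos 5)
Your argument is correct and is essentially the paper's own proof: both hinge on the observation that the projective model structure on $\CE_0$ is transferred from the coarse model structure on $C_2\T$, whose fibrations and weak equivalences are created by forgetting the $C_2$-action, so $i^*$ preserves levelwise fibrations and weak equivalences and is therefore right Quillen. The additional "sanity check" on generating cofibrations is logically redundant (and glosses over a small twist in how the $C_2$-action is distributed in $(C_2)_+ \wedge \J_0(U,-) \wedge i$), but the main argument stands on its own.
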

\begin{proof}
As we are using the underlying model structure on $\CE_0$, the forgetful functor $i^*$ preserves levelwise weak equivalence and levelwise fibrations, hence the forgetful functor is a right Quillen functor. 
\end{proof}

\subsection{Change of universe}

Forgetting the $C_2$-action does not result in a unitary functor, it only permits functors evaluated on complexified real inner product spaces. To recover all of unitary calculus, we use the inclusion of the universe $\C \otimes \R^\infty \hookrightarrow \C^\infty$ to construct change of universe functors between $\RE_0$ and $\E_0^\mathbf{U}$, which will give an equivalence between the input functors for unitary calculus indexed on $\C^\infty$ and the input functors for unitary calculus indexed on $\C \otimes \R^\infty$. On the level of vector spaces, the inclusion of categories $j : \CJ_0 \to \UJ_0$, is a strong symmetric monoidal functor, hence, as in \cite[V.1.3]{MM02}, we can define a change of universe functor 
\[
\IUR \colon \E_0^\mathbf{U} \longrightarrow \RE_0,
\]
given by $(\IUR F)(V_\C) = F(j(V_\C))$. We can define (as in \cite[V.1.2]{MM02}) a change of universe functor in the other direction
\[
\IRU \colon \RE_0\longrightarrow \E_0^\mathbf{U}, 
\]
given by $\IRU  E(V) = \UJ_0(\C^n, V)\wedge_{\U(n)} E(\C \otimes \R^n)$, whenever $\dim_\C(V) = n$. These are inverse equivalences of categories by \cite[V.1.5]{MM02}.

\begin{prop}[{\cite[V.1.5]{MM02}}]
There is a diagram of adjoint functors
\[
\xymatrix@C+2cm{
\RE_0 \ar@<3ex>[r]|{\IRU } \ar@<-3ex>[r]|{\IRU }  & \E_0^\mathbf{U} \ar[l]|{\IUR },
}
\]
such that each adjoint pair defines an equivalence of categories. 
\end{prop}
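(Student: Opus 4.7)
The plan is to reduce everything to the observation that the inclusion $j : \CJ_0 \to \UJ_0$ is an equivalence of $\T$-enriched categories. First I would verify this directly: $j$ is full and faithful essentially by construction, since the morphism spaces $\CJ_0(V_\C, W_\C)$ and $\UJ_0(j V_\C, j W_\C)$ are both the (based) Stiefel manifold of complex linear isometries $V_\C \hookrightarrow W_\C$; and $j$ is essentially surjective because every complex inner product subspace $W \subset \C^\infty$ of complex dimension $n$ admits a unitary isomorphism with $\C \otimes \R^n$, obtained by choosing any orthonormal basis.

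Next I would identify the functors with standard enriched Kan extension constructions. The functor $\IUR$ is enriched precomposition $j^*$, and the coend formula
\[
(\IRU E)(V) = \UJ_0(\C^n, V) \wedge_{\U(n)} E(\C \otimes \R^n)
\]
is a skeletal reformulation of the enriched left Kan extension $j_!$ along $j$: every object of $\CJ_0$ is isomorphic to $\C \otimes \R^n$ for some $n$, so the coend over $\CJ_0$ collapses to the orbit under the automorphism group $\U(n) = \Aut_{\CJ_0}(\C \otimes \R^n)$. The enriched Kan extension adjunction then provides $\IRU \dashv \IUR$.

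For the second adjunction $\IUR \dashv \IRU$, I would invoke the principle that precomposition along an equivalence of enriched categories is itself an equivalence of functor categories, so any quasi-inverse is simultaneously left and right adjoint, with unit and counit mutually inverse. Concretely, the counit $(\IRU \circ \IUR) F \to F$ evaluated at $V \in \UJ_0$ of complex dimension $n$ is the map
\[
\UJ_0(\C^n, V) \wedge_{\U(n)} F(\C^n) \longrightarrow F(V)
\]
induced by the action of $\UJ_0$ on $F$, which is a based homeomorphism because $\UJ_0(\C^n, V)$ is a free transitive $\U(n)$-space once any unitary isomorphism $\C^n \cong V$ is chosen. The unit $E \to (\IUR \circ \IRU) E$ is checked analogously on objects of the form $\C \otimes \R^n$, where it reduces to the identity after identifying $\UJ_0(\C^n, \C^n) = \U(n)_+$.

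The main obstacle is simply the bookkeeping involved in identifying the skeletal coend formula with both Kan extensions simultaneously, but since this is precisely the content of \cite[V.1.5]{MM02} applied to our context, the verification amounts to transcribing those arguments. The $C_2$-equivariance plays no role at this stage since we are working with the underlying non-equivariant categories $\RE_0$ and $\E_0^\bU$; it will re-enter in later sections when we lift back to $\CE_0$.
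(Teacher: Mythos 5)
Your proposal is correct and takes essentially the same approach as the paper, which simply cites \cite[V.1.5]{MM02} without supplying an independent argument; you unpack that citation by exhibiting $j \colon \CJ_0 \to \UJ_0$ as a $\T$-enriched equivalence and identifying $\IUR$ and $\IRU$ with enriched precomposition $j^*$ and (skeletalised) left Kan extension $j_!$, then invoking the standard adjoint-equivalence fact. The only minor imprecision is that the unit must be checked on every object $V_\C \in \CJ_0$, not just on $\C \otimes \R^n$, but the verification there is the same coend-collapse argument you already give for the counit.
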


These are also Quillen adjunctions between the projective model structures, as the functor $\IUR $ is both  left and right Quillen; it preserves levelwise weak equivalences, levelwise fibrations and generating cofibrations, hence also preserves projective cofibrations. 

\begin{prop}[{\cite[V.1.6]{MM02}}]\label{prop: QA for R and U}
The adjoint functors
\[
\xymatrix@C+2cm{
\RE_0 \ar@<3ex>[r]|{\IRU } \ar@<-3ex>[r]|{\IRU }  & \E_0^\mathbf{U} \ar[l]|{\IUR },
}
\]
define Quillen adjoint pairs between the projective model structures. 
\end{prop}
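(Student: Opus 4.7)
The approach is to exploit the fact, recorded in the preceding proposition, that $\IUR$ and $\IRU$ are mutually inverse equivalences of categories, and so reduce both Quillen adjunctions to a single verification: that $\IUR$ preserves projective weak equivalences, projective fibrations, and projective cofibrations. Given this, $\IRU$ (which agrees with the inverse of $\IUR$ up to natural isomorphism) will preserve each of these classes as well, whence both adjoint pairs automatically become Quillen adjunctions.

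First I would observe that the projective weak equivalences and fibrations in both $\RE_0$ and $\E_0^\mathbf{U}$ are defined objectwise, and that $\IUR$ is defined by precomposition with the inclusion $j : \CJ_0 \to \UJ_0$. It is therefore immediate that $\IUR$ preserves levelwise weak equivalences and levelwise Serre fibrations. This alone already verifies that the pair $\IRU \dashv \IUR$ (in which $\IRU$ is the left adjoint) is a Quillen adjunction.

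For the pair $\IUR \dashv \IRU$, it remains to check that $\IUR$ is left Quillen. Since $\IUR$ already preserves all weak equivalences, it suffices to show that $\IUR$ sends the generating cofibrations $\UJ_0(U, -) \wedge i$ of $\E_0^\mathbf{U}$ to projective cofibrations in $\RE_0$; the image is $\UJ_0(U, j(-)) \wedge i$. The key observation is that every $U \in \UJ_0$ is isometrically isomorphic to some $\C^n = j(\C \otimes \R^n)$, and because $j$ is fully faithful, $\UJ_0(j(V_\C), j(-)) = \CJ_0(V_\C, -)$ as $\T$-enriched functors on $\CJ_0$. Consequently, the image under $\IUR$ of each generating cofibration of $\E_0^\mathbf{U}$ is isomorphic to a generating cofibration of $\RE_0$, and by the standard cell-attachment and retract preservation argument $\IUR$ sends arbitrary projective cofibrations to projective cofibrations. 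An identical argument handles the generating acyclic cofibrations.

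The only substantive step is the identification of the image of a generating cofibration, and this rests jointly on the fully faithfulness of $j$ on morphism spaces and its essential surjectivity onto $\UJ_0$ up to isomorphism; neither point poses any real difficulty once the change of universe functors have been set up as in \cite[V.1]{MM02}.
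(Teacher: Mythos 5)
Your proposal is correct and follows essentially the same approach as the paper: show that $\IUR$, being given by precomposition, preserves levelwise weak equivalences and fibrations (making it right Quillen), and preserves generating cofibrations by the essential-surjectivity and full-faithfulness of $j$ (making it left Quillen). You have simply spelled out the identification $\UJ_0(U, j(-)) \cong \CJ_0(V_\C, -)$ that the paper leaves implicit in its one-line justification.
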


\subsection{Complete picture}

We have seen that unitary calculus defined on $\C^\infty$ and unitary calculus defined on $\C \otimes \R^\infty$ have equivalent input categories. On the homotopical side, the following result is a combination of Proposition \ref{prop: QA for forget on input} and Proposition \ref{prop: QA for R and U}, and summarises this section.

\begin{prop}
The adjoint functors
\[
\xymatrix@C+2cm{
\CE_0 \ar@<-1.5ex>[r]_{i^*} & \ar@<-1.5ex>[l]_{(C_2)_+ \wedge (-)} \RE_0 \ar@<3ex>[r]|{\IRU } \ar@<-3ex>[r]|{\IRU }  & \E_0^\mathbf{U} \ar[l]|{\IUR },
}
\]
define Quillen adjoint pairs between the projective model structures. 
\end{prop}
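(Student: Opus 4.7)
The plan is to observe that this proposition is simply a concatenation of the two Quillen adjunctions already established in this section, and therefore admits an essentially one-line proof once the relevant references are invoked.

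First I would invoke Proposition~\ref{prop: QA for forget on input}, which establishes that the adjunction $(C_2)_+ \wedge (-) \dashv i^*$ between $\CE_0$ and $\RE_0$ is a Quillen adjunction between the projective model structures. The underlying point, as noted there, is that $\CE_0$ is equipped with the projective model structure transferred from the coarse model structure on $C_2\T$, so that the forgetful functor $i^*$ preserves both levelwise weak equivalences and levelwise Serre fibrations, and is therefore right Quillen.

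Next I would invoke Proposition~\ref{prop: QA for R and U}, which establishes that both Quillen pairs built from the change of universe functors $\IUR$ and $\IRU$ are Quillen adjunctions between the projective model structures on $\RE_0$ and $\E_0^\bU$. The essential ingredient there is that $\IUR$, being precomposition with the strong symmetric monoidal inclusion $j \colon \CJ_0 \to \UJ_0$, preserves levelwise weak equivalences and levelwise fibrations and sends generating projective cofibrations to generating projective cofibrations; it is thus simultaneously left and right Quillen, witnessing both of the displayed adjunctions with $\IRU$.

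The main ``obstacle'', if one can call it such, is purely presentational: one checks that the orientations of the adjoints in the concatenated diagram match those in Propositions~\ref{prop: QA for forget on input} and~\ref{prop: QA for R and U}, and then reads off that each named adjoint pair in the diagram is Quillen between the projective model structures. There is no new homotopical input required beyond splicing the two diagrams together; this is the standard closing summary lemma of the section, and its value is in assembling the comparison diagram for subsequent use, not in any further verification.
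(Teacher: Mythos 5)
Your proposal is correct and matches the paper exactly: the text introducing this proposition states explicitly that it "is a combination of Proposition \ref{prop: QA for forget on input} and Proposition \ref{prop: QA for R and U}, and summarises this section," which is precisely the concatenation you describe. No further verification is needed beyond citing those two results.
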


\begin{rem}
Since the category of input functors for unitary calculus is equivalent to the category $\RE_0$, unitary calculus may be constructed using $\RE_0$ in place of $\E_0^\mathbf{U}$. In such an instance, the unitary functors may be equipped with a $C_2$--action via complex conjugation on the vector space level, and hence be considered as objects of $\CE_0$. As such, the input functors for unitary calculus are completely recovered  (up to equivalence of categories) by forgetting the $C_2$--action on a functor with reality. 
\end{rem}

\section{Recovering the derivatives}\label{section: derivatives} 

One of the key components of any theory of calculus is that of derivatives. We start by considering how the derivatives of functors with reality relate to the derivatives of unitary functors. This is a significantly different approach than previously applied to comparing calculi; e.g. both Barnes and Eldred \cite{BE16} and the author \cite{Ta20} compare calculi by first comparing the homogeneous functors via their characterisations as infinite loop spaces.

\subsection{Forgetting the $C_2$--action}

We have seen that forgetting the $C_2$--action defines a functor $i^* \colon \CE_0 \longrightarrow \RE_0.$ In fact, this further defines a functor $i^*: \CE_n \to \RE_n$ for all $n \geq 0$.

\begin{lem}\label{lem: derivatives and forget}
For $F \in \CE_m$, there is a natural isomorphism $\ind_m^n (i^*F) \cong i^* (\ind_m^n F)$. That is, for $E \in \CE_0$, the $n$--th derivative of $i^*E \in \RE_0$ is naturally isomorphic to the image of the $n$--th derivative of $E$ under the forgetful functor $i^* : \CE_n \to \RE_n$. 
\end{lem}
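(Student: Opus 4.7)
The plan is to exploit the explicit end formula for the right Kan extension defining $\ind_m^n$ and observe that the forgetful functor $i^*$ is compatible with this formula, both because it preserves limits (having a left adjoint $(C_2)_+ \wedge (-)$) and because it is a strong symmetric monoidal functor that preserves internal mapping objects. The key point is the identification of the underlying non-equivariant category: on objects and morphism spaces, $i^*\CJ_n$ is precisely $\RJ_n$, so that applying $i^*$ at the level of enriched hom objects recovers the $\T$-enriched mapping spaces used to define $\ind_m^n$ in $\RE_n$.

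First, I would record the formula
\[
(\ind_m^n F)(V) = \int_{U \in \CJ_m} \Map_{C_2\T}\bigl(\CJ_n(i_m^n U, V),\ F(U)\bigr),
\]
where the end is computed in $C_2\T$-enriched functors. Since $i^*$ is a right adjoint, it preserves all limits, and in particular preserves ends; moreover, viewing $C_2\T$ as enriched over itself, the underlying space of the internal mapping object $\Map_{C_2\T}(X,Y)$ is exactly $\Map_\T(i^*X, i^*Y)$, because the $C_2$-space of all (non-equivariant) based maps between $C_2$-spaces, with $C_2$ acting by conjugation, forgets to the ordinary mapping space. Combining these two facts, one obtains
\[
i^*(\ind_m^n F)(V) \;\cong\; \int_{U \in \CJ_m} \Map_{\T}\bigl(i^*\CJ_n(i_m^n U, V),\ i^*F(U)\bigr).
\]

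Next, I would appeal to the cofinality result of Proposition \ref{prop: cofinality of ends}, together with the identification $i^*\CJ_m \cong \RJ_m$ and $i^*\CJ_n \cong \RJ_n$ of underlying $\T$-enriched categories, to rewrite the $\T$-enriched end on the right-hand side as
\[
\int_{U \in \RJ_m} \Map_{\T}\bigl(\RJ_n(i_m^n U, V),\ (i^*F)(U)\bigr),
\]
which is precisely $(\ind_m^n (i^* F))(V)$ by definition. Naturality in $F$ is immediate from the naturality of each step.

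The main obstacle I anticipate is the compatibility between the $C_2\T$-enriched end and the $\T$-enriched end on underlying spaces; this is exactly the subtle cofinality statement that Proposition \ref{prop: cofinality of ends} is designed to handle, and once it is invoked the identification is formal. The verification that $i^*$ preserves internal hom objects (i.e.\ that forgetting the $C_2$-action commutes with the full mapping-space construction) is essentially a consequence of the closed symmetric monoidal structure of $C_2\T$, so the bulk of the work reduces to a careful bookkeeping of enrichments rather than any substantive homotopical argument.
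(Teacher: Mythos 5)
Your proposal takes essentially the same route as the paper's proof: write $\ind_m^n$ as an enriched end and pass $i^*$ through it, using that $i^*$ is a limit-preserving right adjoint compatible with the closed monoidal structures. There are two points worth correcting, though. First, the displayed end formula has the arguments of $\CJ_n$ the wrong way round: the right Kan extension of $F$ along $i_m^n$ is
\[
(\ind_m^n F)(V) \;=\; \int_{U \in \CJ_m} \Map\bigl(\CJ_n(V, i_m^n U),\, F(U)\bigr),
\]
not $\Map\bigl(\CJ_n(i_m^n U, V),\, F(U)\bigr)$; as written your integrand is covariant in $U$ in both slots and does not define an end at all. Second, the appeal to Proposition \ref{prop: cofinality of ends} is misdirected. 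Forgetting the $C_2$-action does not change the indexing category — the paper uses the same symbol $\CJ_m$ for both, because the $\T$-enriched category underlying the $C_2\T$-category $\CJ_m$ is literally the category indexing $\RE_m$ (same objects, same underlying morphism spaces, only the enrichment level drops). There is therefore no comparison functor whose cofinality needs to be checked; the cofinality proposition is only needed in the paper for the change-of-universe functors $\IRU$, $\IUR$, where the indexing categories genuinely differ ($\CJ$ versus $\UJ$). Once the formula is corrected and the cofinality step removed, what remains — $i^*$ preserves the end, and $i^*\Map_{C_2\T}(X,Y) \cong \Map_\T(i^*X, i^*Y)$ — is exactly the paper's proof.
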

\begin{proof}
The space $\ind_m^n (i^*F)(V)$ is given by the space of $\T$--enriched natural transformations $\RE_m(\CJ_n(V,-), i^*F)$, which can be expressed as the $\T$-enriched end
\[
\int_{W \in \CJ_m} \Map(\CJ_n(V,W) , i^*F(W)).
\]
On the other hand, at $V \in \CJ_m$, $\ind_m^n F(V)$ is given by the $C_2$--space of natural transformations 
\[
\CE_m(\CJ_n(V,-), F),
\]
which is the $C_2\T$-enriched end
\[
\int_{W \in \CJ_m} \Map(\CJ_n(V,W) , F(W)),
\]
where $C_2$ acts by conjugation. Applying the forgetful functor gives a $\T$-enriched end
\[
\int_{W \in \CJ_m} i^*\Map(\CJ_n(V,W) , F(W)),
\]
which is clearly homeomorphic to 
\[
\int_{W \in \CJ_m} \Map(\CJ_n(V,W) , i^*F(W)). \qedhere
\]
\end{proof}

\subsection{Change of universe}
To see how the change of universe functors interact with the derivatives, we use the description of the derivative as a right Kan extension along an inclusion of categories (see Definition \ref{def: derivative}). In particular, this describes the derivative of a functor as a right adjoint to a restriction functor, which in turn is defined by precomposition with the inclusion of categories. It is this adjoint pair which we consider with the change of universes.

\begin{lem}\label{lem: restriction and change of universe}
The diagram of adjoint pairs
\[
\xymatrix@C+1cm@R+1cm{
\RE_m \ar@<-1ex>[r]_{\ind_m^n}  \ar@<1ex>[d]^{\IRU }  &  \ar@<-1ex>[l]_{\res_m^n}  \ar@<1ex>[d]^{\IRU }  \RE_n \\
\E_m^\mathbf{U} \ar@<-1ex>[r]_{\ind_m^n}  \ar@<1ex>[u]^{\IUR }  &  \ar@<-1ex>[l]_{\res_m^n}   \ar@<1ex>[u]^{\IUR } \E_n^\mathbf{U} \\
}
\]
commutes up to natural isomorphism.
\end{lem}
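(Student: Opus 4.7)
The plan is to exploit the fact that both $\res_m^n$ and $\IUR$ are given by precomposition along inclusions of enriched categories, and then to propagate the resulting strict commutativity through the adjunctions using the mate correspondence together with the fact that $\IUR$ and $\IRU$ are mutually inverse equivalences of categories.

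First I would extend the change of universe $\IUR$ from level $0$ to the intermediate categories. The inclusion $j \colon \CJ_0 \to \UJ_0$ lifts to a $\T$-enriched functor $j \colon \CJ_n \to \UJ_n$ which is the identity on objects and which sends a morphism $(f,x)$ to $(j(f),x)$; the complement vector bundle is preserved because $j$ is an isometric inclusion. This lifted $j$ is fully faithful and essentially surjective at every level, so $\IUR = j^{\ast} \colon \E_n^\bU \to \RE_n$ and its quasi-inverse $\IRU$ are mutually inverse equivalences of enriched categories. Moreover, the inclusions $i_m^n$ and $j$ commute strictly: $j \circ i_m^n = i_m^n \circ j$ as enriched functors $\CJ_m \to \UJ_n$, since both send a pair $(f,x)$ from $\CJ_m$ to the pair $(j(f),x)$ in $\UJ_n$.

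From this, contravariance of precomposition immediately yields the strict equality
\[
\res_m^n \circ \IUR = \IUR \circ \res_m^n,
\]
which is the left-adjoint square of the diagram for the vertical adjunction $\IUR \dashv \IRU$. Applying the mate correspondence to $\res_m^n \dashv \ind_m^n$ and $\IUR \dashv \IRU$, this strict equality gives a natural isomorphism $\ind_m^n \circ \IRU \cong \IRU \circ \ind_m^n$ between the right adjoints, since both are right adjoints to equal functors and right adjoints are unique up to natural isomorphism.

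For the other vertical adjunction $\IRU \dashv \IUR$, I would conjugate the strict equality by the equivalence, using that the unit $\id \Rightarrow \IRU \circ \IUR$ and counit $\IUR \circ \IRU \Rightarrow \id$ are natural isomorphisms, to extract $\IRU \circ \res_m^n \cong \res_m^n \circ \IRU$; a second application of the mate correspondence then yields $\ind_m^n \circ \IUR \cong \IUR \circ \ind_m^n$. Together these confirm commutativity for both choices of vertical adjoint pair, as required by the convention introduced at the end of the introduction. The main technical point is the initial extension of $\IUR$ and $\IRU$ to the jet categories $\J_n$, where the morphism spaces are Thom spaces of complement bundles rather than merely Stiefel manifolds; this reduces to the straightforward compatibility check above between the complement bundle and the isometric inclusion $j$, after which the remainder of the argument is purely formal.
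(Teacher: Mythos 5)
Your proof is correct and follows essentially the same route as the paper's: identify the strict commutativity $j\circ i_m^n = i_m^n\circ j$ at the level of jet categories, deduce that the precomposition functors $\IUR=j^*$ and $\res_m^n=(i_m^n)^*$ commute, and pass to adjoints. The one place where you go further than the paper's own argument is in verifying commutativity for \emph{both} interpretations of the vertical $(\IUR,\IRU)$ pair as required by the paper's commutativity convention; the paper's proof of the lemma only establishes the $\IUR\dashv\IRU$ half and defers the conjugation-by-equivalence step giving $\res_m^n\circ\IRU\cong\IRU\circ\res_m^n$ to the proof of Corollary~\ref{cor: derivatives commute with change of universe}, whereas you fold it into the proof of the lemma itself. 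Your remark that the nontrivial content is the extension of $j$ and hence $\IUR$ to the Thom-space jet categories $\J_n$ is accurate, and the check you give (that $j$ is an isometric inclusion compatible with the complement bundles) is the right one.
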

\begin{proof}
We show that the left adjoints commute up to natural isomorphism. Consider the diagram (see Definition \ref{def: derivative})
\[
\xymatrix{
\CJ_m \ar[r]^{i_m^n} \ar[d]_{j} & \CJ_n \ar[d]^{j} \\
\J_m^\mathbf{U}  \ar[r]_{i_m^n} & \J_n^\mathbf{U}
}
\]
which is commutative on both objects and morphisms. It follows that there is a natural isomorphism 
\[
j^* \circ (i_m^n)^* = (i_m^n \circ j)^* \cong (j \circ i_m^n)^* = (i_m^n)^* \circ j^*,
\]
and hence by definition a natural isomorphism
\[
\IUR  \circ \res_m^n \cong \res_m^n \circ \IUR . \qedhere
\]
\end{proof}

As corollaries, we see that change of universe functors commute with derivatives in both directions. 

\begin{cor}\label{cor: derivatives and change of universe}
If $F \in \RE_m$, then there is a natural isomorphism $\ind_m^n (\IRU  F) \cong \IRU  (\ind_m^n F)$. In particular, for $E \in \RE_0$, the $n$-th derivative of $\IRU  E$ is naturally isomorphic to the image of the $n$-th derivative of $E$ under the change of universe functor $\IRU $.
\end{cor}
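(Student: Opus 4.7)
The plan is to deduce the corollary formally from Lemma \ref{lem: restriction and change of universe} by passing to right adjoints. First I would recall that $\ind_m^n$ is, by construction, the right adjoint of $\res_m^n$ (right Kan extension is right adjoint to precomposition), and that $\IRU$ may be regarded as right adjoint to $\IUR$ by Proposition \ref{prop: QA for R and U}, since $(\IRU, \IUR)$ forms an equivalence of categories and each is therefore both a left and a right adjoint of the other.

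Given these adjunctions, the composite $\ind_m^n \circ \IRU \colon \RE_m \to \E_n^{\mathbf{U}}$ is a composition of right adjoints whose left adjoint is $\IUR \circ \res_m^n$, while $\IRU \circ \ind_m^n$ is likewise a right adjoint with left adjoint $\res_m^n \circ \IUR$. The (already proved) Lemma \ref{lem: restriction and change of universe} provides the natural isomorphism $\IUR \circ \res_m^n \cong \res_m^n \circ \IUR$ of these left adjoints, so uniqueness of right adjoints up to natural isomorphism yields $\ind_m^n \circ \IRU \cong \IRU \circ \ind_m^n$. Specialising to $m = 0$ gives precisely the stated identification of the $n$-th derivative of $\IRU E$ with $\IRU$ applied to the $n$-th derivative of $E$.

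Since the content of the corollary is purely formal given the lemma, there is no real obstacle; this is essentially the convention in the Notation section that a diagram of adjoint pairs commutes iff both its sub-diagram of left adjoints and its sub-diagram of right adjoints commute up to natural isomorphism. In particular, one does not need to unwind the coend formula for $\IRU$ or verify anything further by hand, and the naturality in $F$ is automatic from the naturality of the adjunction isomorphisms used in the right-adjoint comparison.
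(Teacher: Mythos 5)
Your proof is correct and matches the paper's argument exactly: the paper also deduces the corollary from Lemma \ref{lem: restriction and change of universe} by the principle that commutativity of the square of left adjoints forces commutativity of the square of right adjoints, with $\res_m^n \dashv \ind_m^n$ and $\IUR \dashv \IRU$. Your version is just a more explicit unpacking of the one-line proof in the paper, correctly identifying the two composite right adjoints and their left adjoints and invoking uniqueness of adjoints.
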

\begin{proof}
As the left adjoints in Lemma \ref{lem: restriction and change of universe} commute, so two must the right adjoints, and the result follows.
\end{proof}

In Lemma~\ref{lem: restriction and change of universe} we considered a commutative diagram relating change of universe functors and differentiation in which $\IUR$ acted as the left adjoint. In what follows we consider the analogous diagram in which $\IUR$ is acting as the right adjoint.

\begin{cor}\label{cor: derivatives commute with change of universe}
If $F \in \E_m^\mathbf{U}$, then there is a natural isomorphism $\ind_m^n (\IUR  F) \cong \IUR  (\ind_m^n F)$. In particular, for $E \in \E_0^\bU$, the $n$-th derivative of $\IUR  E$ is naturally isomorphic to the image of the $n$-th derivative of $E$ under the change of universe functor $\IUR $.
\end{cor}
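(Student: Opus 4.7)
The plan is to obtain this natural isomorphism as a direct formal consequence of the previous Corollary \ref{cor: derivatives and change of universe} combined with the fact that $\IRU$ and $\IUR$ form inverse equivalences of categories. Given $F \in \E_m^{\mathbf{U}}$, I would apply Corollary \ref{cor: derivatives and change of universe} to the functor $\IUR F \in \RE_m$ to produce a natural isomorphism
\[
\ind_m^n(\IRU(\IUR F)) \cong \IRU(\ind_m^n(\IUR F)).
\]
Using the equivalence $\IRU \circ \IUR \cong \id$, the left-hand side simplifies to $\ind_m^n F$. Applying $\IUR$ to both sides of the resulting isomorphism and using the other half of the equivalence $\IUR \circ \IRU \cong \id$ then yields the claimed natural isomorphism $\IUR(\ind_m^n F) \cong \ind_m^n(\IUR F)$.

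Alternatively, the same conclusion can be reached by mirroring the strategy of Corollary \ref{cor: derivatives and change of universe} but working with the opposite adjunction in the triple $\IRU \dashv \IUR \dashv \IRU$: in the adjunction $\IUR \dashv \IRU$, the functor $\IUR$ is a left adjoint with right adjoint $\IRU$. Taking right adjoints of the natural isomorphism $\IUR \circ \res_m^n \cong \res_m^n \circ \IUR$ proven inside Lemma \ref{lem: restriction and change of universe}, now with respect to this adjunction (and with respect to $\res_m^n \dashv \ind_m^n$), immediately yields $\ind_m^n \circ \IUR \cong \IUR \circ \ind_m^n$. This is simply the statement that, in the convention introduced in the Notation and Conventions subsection, the four-term diagram of Lemma \ref{lem: restriction and change of universe} commutes in \emph{both} of the senses required by a triple adjunction, not only in the sense already used for Corollary \ref{cor: derivatives and change of universe}.

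There is no serious obstacle in this argument; the entire content is formal bookkeeping with the triple adjunction. The essential mathematical input is already contained in Lemma \ref{lem: restriction and change of universe}, which provides the underlying strict commutativity of the relevant indexing categories, together with the fact that $\IUR$ is simultaneously a left and a right adjoint to $\IRU$. No further direct computation with the Kan-extension formula defining $\IRU$ is needed, which is a pleasant consequence of the equivalence of categories and would otherwise be the main technical difficulty.
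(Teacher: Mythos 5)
Your proof is correct and essentially the same as the paper's, just phrased at a slightly higher level. The paper works at the level of the square of Lemma \ref{lem: restriction and change of universe}: it takes the natural isomorphism $\IUR\circ\res_m^n\cong\res_m^n\circ\IUR$ established there, pre- and post-composes with $\IRU$ using the equivalence of categories to obtain $\res_m^n\circ\IRU\cong\IRU\circ\res_m^n$, and then passes to right adjoints of the adjunction $(\IRU,\IUR)$ to get the statement about $\ind_m^n$. Your first argument instead applies Corollary \ref{cor: derivatives and change of universe} to $\IUR F$ and cancels with the equivalence; your second argument is exactly the paper's move in the guise of ``take adjoints of the other commuting square.'' Both collapse to the same formal content, and you correctly identified that the only real inputs are Lemma \ref{lem: restriction and change of universe} and the two-sided adjoint equivalence.
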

\begin{proof}
We show that the diagram
\[
\xymatrix@C+1cm@R+1cm{
\RE_m \ar@<-1ex>[r]_{\ind_m^n}  \ar@<-1ex>[d]_{\IRU }  &  \ar@<-1ex>[l]_{\res_m^n}  \ar@<-1ex>[d]_{\IRU }  \RE_n \\
\E_m^\mathbf{U} \ar@<-1ex>[r]_{\ind_m^n}  \ar@<-1ex>[u]_{\IUR }  &  \ar@<-1ex>[l]_{\res_m^n}   \ar@<-1ex>[u]_{\IUR } \E_n^\mathbf{U}. \\
}
\]
commutes up to natural isomorphism by considering the left adjoints, and the result will follow from the right adjoints commuting up to natural isomorphism. By Lemma \ref{lem: restriction and change of universe}, there is a natural isomorphism $\IUR  \circ \res_m^n \cong \res_m^n \circ \IUR $. Postcomposition with $\IRU $ gives a natural isomorphism 
\[
\res_m^n \cong \IRU  \circ \IUR  \circ \res_m^n \cong \IRU  \circ \res_m^n \circ \IUR ,
\]
since $\IRU $ is the inverse equivalence of categories to $\IUR $. Precomposition with $\IRU $ further produces a natural isomorphism
\[
\res_m^n \circ \IRU  \cong  \IRU  \circ  \res_m^n \circ \IUR  \circ \IRU  \cong  \IRU   \circ \res_m^n,
\]
hence the left adjoints in the above diagram commute up to natural isomorphism.
\end{proof}

\subsection{Complete picture}

We gather together the process of forgetting the $C_2$--action and the change of universe to achieve the following result relating derivatives in calculus with reality and derivatives in unitary calculus. 

\begin{prop}\label{thm: recover derivatives}
If $F \in \CE_m$, then there is a natural isomorphism 
\[
\ind_m^n(\IRU  (i^*F)) \cong \IRU (i^*(\ind_m^n F)),
\]
that is, for $E \in \RE_0$, the $n$--th derivative of $\IRU  (i^*E)$ is naturally isomorphic to the image of the $n$--th derivative of $E$ under forgetting the $C_2$-action and the change of universe $\IRU$. 
\end{prop}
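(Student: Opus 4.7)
The plan is to obtain the desired natural isomorphism by chaining together the two compatibility results proved earlier in this section. Corollary \ref{cor: derivatives and change of universe} tells us that the change of universe functor $\IRU$ commutes up to natural isomorphism with the induction functor $\ind_m^n$, while Lemma \ref{lem: derivatives and forget} tells us that the forgetful functor $i^*$ likewise commutes with $\ind_m^n$. Since the composite $\IRU \circ i^*$ is the comparison functor appearing in the statement, a two-step application of these isomorphisms should yield the result immediately.

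Concretely, I would first apply Corollary \ref{cor: derivatives and change of universe} to the functor $i^*F \in \RE_m$, obtaining
\[
\ind_m^n(\IRU(i^*F)) \cong \IRU(\ind_m^n(i^*F)),
\]
and then apply Lemma \ref{lem: derivatives and forget} inside the argument to $\IRU$ to get
\[
\IRU(\ind_m^n(i^*F)) \cong \IRU(i^*(\ind_m^n F)).
\]
Composing these two natural isomorphisms produces the required identification, with naturality in $F$ inherited directly from the naturality of the individual isomorphisms.

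I do not anticipate a genuine obstacle, as the real content has already been discharged by the two preceding results; the present proposition is essentially a formal corollary of them. The one subtle point worth a brief remark in the write-up is that $\IRU$ and $i^*$ do not commute with each other in general, since they concern different structure (the underlying universe of inner product spaces, and the $C_2$-action, respectively). For this reason the composite $\IRU \circ i^*$ must be treated as a unit, and the two compatibility isomorphisms applied sequentially, rather than trying to interchange $i^*$ and $\IRU$ at any stage.
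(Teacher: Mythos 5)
Your proof is correct and matches the paper's almost word for word: both combine Corollary \ref{cor: derivatives and change of universe} applied to $i^*F$ with the image under $\IRU$ of the isomorphism from Lemma \ref{lem: derivatives and forget}, merely presenting the two composands in opposite order. Your closing remark about treating $\IRU \circ i^*$ as a unit rather than interchanging the two is a sensible caution, and consistent with how the paper organises the argument.
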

\begin{proof}
Lemma \ref{lem: derivatives and forget} provides a natural isomorphism, $\ind_m^n (i^*F) \cong i^*(\ind_m^nF)$ in $\RE_n$. The change of universe functor preserves natural isomorphisms, hence there is a natural isomorphism 
\[
\IRU (\ind_m^n (i^*F)) \cong \IRU ( i^*(\ind_m^nF)),
\]
which composed with the natural isomorphism, $\ind_m^n (\IRU  i^*F) \cong \IRU  (\ind_m^n (i^*F))$, of Corollary \ref{cor: derivatives and change of universe} applied to $i^* F \in \RE_m$, yields the result. 
\end{proof}

\section{Recovering polynomial functors}\label{section: polynomials}

We now turn our attention to the recovery of unitary polynomial functors. The real strength in the calculus with reality is highlighted by this approach. To compare the polynomial functors from Goodwillie calculus and the polynomial functors from orthogonal calculus, Barnes and Eldred had to first understand how the homogeneous functors compared, and use an induction argument on the polynomial degree, see \cite[\S3]{BE16}. Further conditions were imposed on the functors to directly compare polynomial approximations (see \cite[Proposition 3.4]{BE16}), and this was mirrored in the comparisons between orthogonal and unitary calculus (see \cite[Section 5]{Ta20}). However, we will see that there are direct comparisons between the polynomial functors (see Proposition \ref{thm: recover poly property}) and polynomial approximations (see Theorem \ref{thm: recover polys}) in the calculus with reality and in unitary calculus.

One characterisation of when a functor is $n$--polynomial (see \cite[Proposition 5.2]{We95}) involves the space of enriched natural transformations from the sphere bundle $S\gamma_n(V,-)$, of the $n$--th complement vector bundle, to the functor in question. These spaces of enriched natural transformations admit a description as enriched ends. The following criterion for cofinality of enriched ends is well known to the experts, but we gather it here for the reader's convenience. It will be instrumental in recovering unitary $n$--polynomial functors from $n$--polynomial functors with reality. Given a ($\cC$-enriched) functor $F: \J \to \cD$, and a ``weight'', $W: \J \to \cC$, the limit of $F$, weighted by $W$, denoted $\lim_W F$, is, if it exists, the object in $\cD$ which satisfies the universal property
\[
\cD (D, \sideset{}{_{W}}\lim F) \cong \nat(W(-), \cD(D, F(-))),
\]
natural in $D$, where $\nat(-,-)$ is the space of $\cC$--enriched natural transformations. Similarly to limits, weighted limits have a cofinality statement, which we have re-phrased from Kelly, \cite[Proposition 4.57]{Ke05}.

\begin{prop}[{\cite[Proposition 4.57]{Ke05}}]
Let $u \colon \mathcal{I} \to \J$ be a ($\cC$--enriched) functor,  $W \colon \mathcal{I} \to \cC$ and $W' \colon \J \to \cC$ be ($\cC$--enriched) weights, and $w \colon W u \to W'$ a ($\cC$--enriched) natural transformation. The following are equivalent.
\begin{enumerate}
\item The functor $u : \mathcal{I} \to \J$ is cofinal. That is,for any ($\cC$--enriched) functor $F \colon \J \to \cC$, 
\[
 \sideset{}{_{W'}}\lim F \cong  \sideset{}{_{W}}\lim Fu,
\]
via the ($\cC$--enriched) natural transformation $w \colon Wu \to W'$, either side existing when the other does.
\item The ($\cC$--enriched) natural transformation $w  \colon Wu \to W'$ exhibits $W'$ as the left Kan extension of $W$ along $u$. 
\end{enumerate}
\end{prop}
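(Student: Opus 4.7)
The plan is to prove the equivalence by reducing both conditions to a single statement: that precomposition with $w$ induces a natural isomorphism on enriched natural transformation spaces. Explicitly, for any $\cC$--enriched functor $X \colon \J \to \cC$, precomposition with $u$ and $w$ should yield a comparison morphism
\[
w^* \colon \nat_\J(W', X) \longrightarrow \nat_\mathcal{I}(W, Xu).
\]
The observation driving the proof is that condition (2) is, by the very definition of the left Kan extension as left adjoint to precomposition by $u$, equivalent to $w^*$ being an isomorphism for every such $X$. The task is then to show that condition (1) is equivalent to this ``nat-space level'' statement.

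For the direction (2) $\Rightarrow$ (1), I would apply the nat-space isomorphism $w^*$ to the representable-type functor $X(-) = \cD(D, F(-))$, for arbitrary $F \colon \J \to \cD$ and $D \in \cD$. By the universal property of weighted limits recalled immediately before the statement of the proposition, both sides of $w^*$ become hom-objects in $\cD$, yielding
\[
\cD(D, \sideset{}{_{W'}}\lim F) \cong \cD(D, \sideset{}{_{W}}\lim Fu),
\]
natural in $D$. The enriched Yoneda lemma then produces the desired natural isomorphism $\lim_{W'} F \cong \lim_W Fu$, mediated by $w$, which is precisely condition (1); either side exists in $\cD$ whenever the other does, since both are representing objects of isomorphic functors $\cD^{\mathrm{op}} \to \cC$.

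For the converse (1) $\Rightarrow$ (2), I would run the argument backwards by a judicious choice of test functor: take $\cD = \cC$ and $F = X$ for an arbitrary $X \colon \J \to \cC$. Condition (1) supplies a natural isomorphism $\lim_{W'} X \cong \lim_W Xu$ (with the convention that one side exists iff the other does), and unpacking this at the representing object via the universal property of weighted limits recovers the isomorphism $w^* \colon \nat_\J(W', X) \cong \nat_\mathcal{I}(W, Xu)$, which is exactly condition (2). The main obstacle I anticipate is careful bookkeeping of naturality in the enriched setting, where $\nat$ is itself an object of $\cC$ computed by an end, together with the existence hypothesis in (1); however, no genuinely new ideas are required beyond the enriched Yoneda lemma and the defining universal properties of $\lim_W$ and $\mathrm{Lan}_u$.
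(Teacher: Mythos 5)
The paper does not supply a proof of this proposition---it is cited verbatim from Kelly---so there is no in-paper argument to compare against; I will assess your proof on its own terms. Your argument is correct and is essentially the standard one. Reducing both conditions to the single assertion that the induced map $w^* \colon \nat_\J(W', X) \to \nat_\mathcal{I}(W, Xu)$ is an isomorphism, naturally in $X \colon \J \to \cC$, is exactly the right pivot: for $(2) \Rightarrow (1)$ you set $X = \cD(D, F(-))$, invoke the universal property of weighted limits on both sides, and conclude by Yoneda in $D$, with the existence clause following from representability as you observe; for $(1) \Rightarrow (2)$ you set $\cD = \cC$ and $F = X$, using the canonical identification $\lim_{W'} X \cong \nat_\J(W', X)$ for $\cC$--valued $X$ (a consequence of $\cC$ being closed, so that $\nat_\J(W', \cC(D, X(-))) \cong \cC(D, \nat_\J(W', X))$), which turns condition $(1)$ literally into the nat-space isomorphism. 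Two small remarks. First, the paper's $w \colon Wu \to W'$ does not type-check, since $W$ has domain $\mathcal{I}$ and $u \colon \mathcal{I} \to \J$; the intended statement is $w \colon W \to W'u$, which is also the direction your $w^*$ implicitly assumes. Second, the universal property you invoke for the left Kan extension is the global one (left adjoint to $u^*$), whereas Kelly's Proposition 4.57 is phrased in terms of the pointwise Kan extension; for $\cC$--valued weights these coincide whenever they exist, so nothing is lost, but for targets other than $\cC$ the two notions differ, and it is worth being explicit about which one is in play.
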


Applying this to enriched ends, which are examples of enriched weighted limits, yields the following. 

\begin{prop}\label{prop: cofinality of ends}
Let $u : \mathcal{I} \to \J$ be a ($\cC$--enriched) functor. The following are equivalent.
\begin{enumerate}
\item For every ($\cC$--enriched) functor $F: \J^\op \times \J \to \cC$, for which the ($\cC$--enriched) end $\int_{j \in \J} F(j,j) $ exists, there is a natural ($\cC$--enriched) isomorphism 
\[
\int_{j \in \J} F(j,j) \longrightarrow \int_{i \in \mathcal{I}} F(u(i), u(i)).
\]
\item For every $j, j' \in \J$, the canonical map of ($\cC$--enriched) coends, induced by composition
\[
\int^{i \in \mathcal{I}} \Hom_\J(j, u(i)) \times \Hom_\J(u(i), j') \longrightarrow \Hom_\J(j, j'),
\]
is a natural ($\cC$--enriched) isomorphism. 
\end{enumerate}
\end{prop}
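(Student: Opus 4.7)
The plan is to realise enriched ends as specific weighted limits and then invoke the preceding proposition (Kelly's cofinality criterion) twice, once for each direction of the equivalence. For any $F : \J^\op \otimes \J \to \cC$, the enriched end $\int_{j \in \J} F(j, j)$ is the weighted limit of $F$ by the Hom-bifunctor $\Hom_\J : \J^\op \otimes \J \to \cC$, as a direct consequence of the enriched Yoneda lemma for ends; pulling back along $u^\op \otimes u : \mathcal{I}^\op \otimes \mathcal{I} \to \J^\op \otimes \J$ identifies $\int_{i \in \mathcal{I}} F(u(i), u(i))$ with the analogous weighted limit of $F \circ (u^\op \otimes u)$ by the restricted weight $\Hom_\J(u-, u-) = \Hom_\J \circ (u^\op \otimes u)$. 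Applying the preceding proposition with these data and the identity natural transformation of weights, condition (1) is equivalent, via the coend formula for enriched left Kan extensions, to the ``two-variable LKE iso''
\[
\int^{(i, i')} \Hom_\J(j, u(i)) \otimes \Hom_\J(u(i), u(i')) \otimes \Hom_\J(u(i'), j') \longrightarrow \Hom_\J(j, j'),
\]
required to be a $\cC$-enriched isomorphism for every $j, j' \in \J$.

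The direction (2) $\Rightarrow$ (1) follows from Fubini for coends: the two-variable coend rewrites as $\int^{i'} \Hom_\J(u(i'), j') \otimes \left(\int^{i} \Hom_\J(j, u(i)) \otimes \Hom_\J(u(i), u(i'))\right)$, and two applications of (2), first with $j'$ replaced by $u(i')$ to collapse the inner coend and then as stated to collapse the outer, reduce it to $\Hom_\J(j, j')$.

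For the converse (1) $\Rightarrow$ (2), invoke the preceding proposition a second time, now applied to $u : \mathcal{I} \to \J$ itself with weights $W = \Hom_\J(j, u-) : \mathcal{I} \to \cC$ and $W' = \Hom_\J(j, -) : \J \to \cC$, for a fixed but arbitrary $j \in \J$, and the identity $W = W' \circ u$. The LKE side of the criterion in this second application reads $\Hom_\J(j, j') \cong \int^{i} \Hom_\J(u(i), j') \otimes \Hom_\J(j, u(i))$ for every $j'$, which is exactly (2). The cofinality side reads $G(j) \cong \int_i [\Hom_\J(j, u(i)), G(u(i))]$ for every $G : \J \to \cC$; this is a specialization of (1) applied to the test bifunctor $F(j_1, j_2) := [\Hom_\J(j, j_1), G(j_2)]$, using the enriched Yoneda lemma for ends to identify $\int_{j_1} F(j_1, j_1) = G(j)$ and reading off the restricted end as $\int_i [\Hom_\J(j, u(i)), G(u(i))]$. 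Hence (1) yields this cofinality, which by the second application of the preceding proposition yields (2).

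The principal subtlety of the argument lies in recognising that the preceding proposition must be invoked twice with different weight choices: once with the Hom-bifunctor weight to translate condition (1) into a two-variable LKE condition, and once with the representable Hom-weight to translate condition (2) into a ``density-style'' cofinality extractable from (1) via the test bifunctor $[\Hom_\J(j, -), G(-)]$. Once these structural choices are made, the remainder of the proof reduces to Fubini for coends, enriched Yoneda, and iterated applications of the hypothesis in each direction.
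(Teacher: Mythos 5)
The step that identifies $\int_{i\in\mathcal{I}} F(u(i),u(i))$ with the limit of $F\circ(u^\op\otimes u)$ weighted by the \emph{restricted} weight $\Hom_\J(u-,u-)$ is incorrect, and it undermines your $(2)\Rightarrow(1)$ direction. For any bifunctor $H:\mathcal{I}^\op\otimes\mathcal{I}\to\cC$, the end $\int_i H(i,i)$ is the limit weighted by the hom-functor $\Hom_\mathcal{I}$ of the \emph{indexing} category, that is $\int_{(i,i')}[\Hom_\mathcal{I}(i,i'),H(i,i')]$; it is not $\int_{(i,i')}[\Hom_\J(u(i),u(i')),H(i,i')]$ unless $u$ is fully faithful. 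Consequently, applying the preceding proposition with your choice of weights does not translate condition (1) into your ``two-variable LKE iso'', and collapsing that coend via Fubini and two applications of (2) does not yield (1) --- you would only recover the cofinality statement corresponding to the \emph{wrong} weight.

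The paper's proof is a single application of the preceding proposition to $u^\op\otimes u$ with $W=\Hom_\mathcal{I}$ and $W'=\Hom_\J$ (and the unit $\Hom_\mathcal{I}\to\Hom_\J\circ(u^\op\otimes u)$). With the correct weight, the left Kan extension coend $\int^{(i,i')}\Hom_\J(j,u(i))\otimes\Hom_\J(u(i'),j')\otimes\Hom_\mathcal{I}(i,i')$ collapses directly by co-Yoneda to $\int^i\Hom_\J(j,u(i))\otimes\Hom_\J(u(i),j')$, which is precisely the coend of condition (2); Kelly then gives $(1)\Leftrightarrow(2)$ in one step, with no Fubini gymnastics or iterated use of the hypothesis. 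Your $(1)\Rightarrow(2)$ argument --- a second application of Kelly to $u$ with the representable weights $\Hom_\J(j,u-)$ and $\Hom_\J(j,-)$ together with the test bifunctor $F(j_1,j_2)=[\Hom_\J(j,j_1),G(j_2)]$ --- is correct and a pleasant alternative for that one direction, but to repair $(2)\Rightarrow(1)$ you must replace your weight with $\Hom_\mathcal{I}$, after which the single-application proof handles both directions simultaneously.
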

\begin{proof}
If $F: \J^\op \times \J \to \cC$ is a ($\cC$--enriched) functor, then the end $\int_{j \in \J} F(j,j)$ is the weighted limit, $\lim_{\Hom_\J} F$, of $F$ weighted by the Hom--functor $\Hom_\J : \J^\op \times \J \to \cC$. The result then follows by cofinality of ($\cC$--enriched) weighted limits, under the weights $W = \Hom_\mathcal{I}$, and $W' = \Hom_\J$, and writing the left Kan extension of $W$ along $u$ as a coend. 
\end{proof}

The relevant example for our purposes is the following. 

\begin{ex}\label{ex: cofinal R to U}
The inclusion functor $j : \CJ_0 \to \UJ_0$ is cofinal, and hence for any functor
\[
F: ({\UJ})^\op \times \UJ \to \T,
\]
for which the end $\int_{V \in \J} F(V,V) $ exists, there is a natural isomorphism, 
\[
\int_{V\in \UJ_0} F(V,V) \longrightarrow \int_{U_\C \in \CJ_0} F(j(U_\C), j(U_\C)).
\]
\end{ex}
\begin{proof}
By Proposition \ref{prop: cofinality of ends}, it suffices to show that for every $V, W \in \UJ_0$ the canonical map of $\T$--enriched coends
\[
\int^{ U_\C \in \CJ_0} \UJ_0(V,j(U_\C)) \wedge \UJ_0(j(U_\C), W) \longrightarrow \UJ_0(V, W).
\]
is a homeomorphism. Every $V \in \UJ_0$ is isometrically isomorphic to an object of $V_\C' \in \CJ_0$, hence the coend of the domain is isomorphic to the coend
\[
\int^{ U_\C \in \CJ_0} \CJ_0(V_\C', U_\C) \wedge \CJ_0(U_\C, W_\C'),
\]
and the result follows. 
\end{proof}

\subsection{Forgetting the $C_2$--action}
Our use of the underlying model structure on $C_2\T$ gives a strong relationship between polynomial functors in the calculus with reality and polynomial functors in unitary calculus indexed on $\C \otimes \R^\infty$. 

\begin{lem}\label{lem: poly and forget}
A functor $F \in \CE_0$ is $n$--polynomial if and only if $i^*F \in \RE_0$ is $n$--polynomial.
\end{lem}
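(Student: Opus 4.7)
The plan is to exploit two features of our setup: the forgetful functor $i^* \colon \CE_0 \to \RE_0$ both preserves and reflects weak equivalences (since the model structure on $C_2\T$ we use is the coarse one, whose weak equivalences are defined on underlying spaces), and $i^*$ commutes, up to natural isomorphism, with the homotopy limit defining $\tau_n$. Given these two facts, the equivalence of the polynomial property is immediate: the canonical map $F(V) \to \tau_n F(V)$ being a weak equivalence in $C_2\T$ is detected by $i^*$, and $i^*$ transports it to the canonical map $(i^*F)(V) \to \tau_n(i^*F)(V)$.

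Concretely, I would carry out the following steps. First, I would observe that $i^*$ is right adjoint to $(C_2)_+ \wedge (-)$, and that in the coarse model structure on $C_2\T$ every object is fibrant and $i^*$ takes fibrations and weak equivalences to Serre fibrations and weak homotopy equivalences respectively; in particular $i^*$ is a right Quillen functor preserving and reflecting weak equivalences. Second, since the homotopy limit
\[
\tau_n F(V) = \underset{U \in \mathbf{R}_{\leq n+1}}{\holim} F(U \oplus V)
\]
is computed in the coarse model structure on $C_2\T$, the right Quillen functor $i^*$ commutes with it up to natural isomorphism, giving
\[
i^*(\tau_n F)(V) \cong \tau_n(i^*F)(V),
\]
naturally in $V \in \CJ_0$; moreover the canonical assembly map $F(V) \to \tau_n F(V)$ is sent by $i^*$ to the canonical assembly map $(i^*F)(V) \to \tau_n(i^*F)(V)$. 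Third, since $i^*$ reflects weak equivalences, the first map is a levelwise weak equivalence (in $C_2\T$) if and only if the second is a levelwise weak homotopy equivalence (in $\T$), which by Definition \ref{def: polynomial} is exactly the statement that $F$ is $n$-polynomial if and only if $i^*F$ is.

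The only step that requires a little care is identifying $i^*(\tau_n F)$ with $\tau_n(i^*F)$. This is not a deep obstacle, however; it is formal from the right adjointness of $i^*$ combined with the observation that homotopy limits in the coarse model structure agree, under $i^*$, with homotopy limits of underlying diagrams. Everything else is definitional.
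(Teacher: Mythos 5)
Your proof is correct and follows essentially the same strategy as the paper's: use that the coarse model structure on $C_2\T$ makes $i^*$ detect weak equivalences, and that $i^*$ commutes with the homotopy limit defining $\tau_n$. One minor quibble: the identification $i^*(\tau_n F) \cong \tau_n(i^*F)$ is not a formal consequence of $i^*$ being right Quillen (right Quillen functors only preserve homotopy limits up to weak equivalence on fibrant diagrams); rather, it holds strictly here because $i^*$ is a limit-preserving right adjoint compatible with the cotensor, so it commutes with the Bousfield--Kan end computing $\holim$ on the nose — which is also the implicit content of the paper's remark that "forgetting the $C_2$-action on the homotopy limit is equivalent to the homotopy limit of the diagram after forgetting the $C_2$-action."
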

\begin{proof}
A functor $F$ in $\CE_0$ is $n$--polynomial if and only if the map 
\[
F(V) \longrightarrow \underset{U \in \bR_{\leq n+1}}{\holim}~ F(V \oplus U),
\]
is a weak homotopy equivalence in $C_2\T$ for all $V \in \CJ_0$. By our choice of model structure on $C_2\T$, this map is a weak equivalence if and only if, for all $V \in \CJ_0$, the map 
\[
i^*F(V) \longrightarrow i^*\underset{U \in \bR_{\leq n+1}}{\holim}~ F(V \oplus U),
\]
is a weak equivalence. Forgetting the $C_2$--action on the homotopy limit is equivalent to the homotopy limit of the diagram after forgetting the $C_2$--action, hence the above map is a weak equivalence if and only if, for all $V \in \CJ_0$, the map 
\[
i^*F(V) \longrightarrow \underset{U \in \bR_{\leq n+1}}{\holim}~ i^*F(V \oplus U),
\]
is a weak equivalence, that is, if and only if $i^*F$ is $n$--polynomial.
\end{proof}

As a direct corollary we achieve the following. 

\begin{cor}\label{cor: forget and tau}
For $F \in \CE_0$, there is a natural isomorphism $i^* (\tau_nF) \cong \tau_n (i^*F)$.
\end{cor}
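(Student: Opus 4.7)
The plan is to unwind both sides from the definitions and reduce to the fact that the forgetful functor $i^* \colon C_2\T \to \T$ commutes with homotopy limits. By Definition \ref{def: polynomial}, for a functor with reality $F \in \CE_0$ and $V \in \CJ_0$, one has
\[
(\tau_n F)(V) = \underset{U \in \bR_{\leq n+1}}{\holim}~ F(V \oplus U),
\]
where the homotopy limit is formed in $C_2\T$ (with the $C_2$--action induced levelwise from the actions on the $F(V \oplus U)$). Applying $i^*$ term by term, the candidate natural isomorphism at $V$ is simply
\[
i^*\!\left(\underset{U \in \bR_{\leq n+1}}{\holim}~ F(V \oplus U)\right) \longrightarrow \underset{U \in \bR_{\leq n+1}}{\holim}~ i^*F(V \oplus U),
\]
whose right--hand side is, by definition, $\tau_n(i^*F)(V)$.

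First I would record that the forgetful functor $i^* \colon C_2\T \to \T$ is a right adjoint (to $(C_2)_+ \wedge (-)$) and therefore strictly preserves limits; combined with the fact that our model of homotopy limits is built from categorical limits of cotensors with simplicial resolutions of the indexing diagram, the same functor commutes with homotopy limits up to canonical isomorphism. This is exactly the ``forgetting the $C_2$--action on the homotopy limit is equivalent to the homotopy limit of the diagram after forgetting the $C_2$--action'' identification already used inside the proof of Lemma \ref{lem: poly and forget}, applied now at the level of the constructions (not just of weak equivalences). So the displayed map above is a natural isomorphism for each $V$.

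Next I would verify naturality in $V \in \CJ_0$ and in $F \in \CE_0$. Both sides are built functorially out of $F$ by evaluating at $V \oplus U$ and assembling a homotopy limit over $U \in \bR_{\leq n+1}$; the forgetful functor $i^*$ being a strict functor, it commutes with the evaluation maps induced by morphisms $V \to V'$ in $\CJ_0$ and by maps $F \to F'$ in $\CE_0$, so the isomorphism assembles into one in $\RE_0$. The only point that requires a moment's care is that the indexing poset $\bR_{\leq n+1}$ used in $\CE_0$ really is the same poset used in $\RE_0$ (both parametrise non--zero complexified subspaces of $\C \otimes \R^{n+1}$), so that the homotopy limits on the two sides are indexed by the identical diagram shape after forgetting $C_2$--actions. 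No real obstacle is expected: the only ingredient beyond formal manipulation of ends/homotopy limits is the preservation of homotopy limits by the right adjoint $i^*$, which is standard for the coarse model structure on $C_2\T$.
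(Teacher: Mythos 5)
Your proposal is correct and matches the paper's own reasoning: the corollary is derived from exactly the observation you isolate — that $i^*$, being a right adjoint that also commutes with cotensors, carries the homotopy limit defining $\tau_n F$ isomorphically to the homotopy limit defining $\tau_n(i^*F)$ — which is the same identification the paper invokes inside the proof of Lemma~\ref{lem: poly and forget}. The paper merely labels the result ``a direct corollary'' without spelling this out, and your write-up supplies precisely the missing justification.
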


This allows us to relate the polynomial approximations for calculus with reality and unitary calculus indexed on the universe $\C \otimes \R^\infty$,  since if two diagrams are term--wise naturally isomorphic, so are the homotopy colimits of the diagrams. 

\begin{lem}\label{lem: forget and poly approx}
For $F \in \CE_0$, there is a natural isomorphism $i^* (T_nF) \cong T_n(i^*F)$ in $\RE_0$. 
\end{lem}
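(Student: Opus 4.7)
The plan is to leverage Corollary \ref{cor: forget and tau} inductively along the defining sequential diagram for $T_n$, and then commute the forgetful functor $i^*$ past the homotopy colimit. Recall that, by Definition \ref{def: poly approx}, $T_n F = \hocolim_k \tau_n^k F$, where the colimit is taken along the natural maps $\rho : \tau_n^k F \to \tau_n^{k+1} F$.

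First, iterating Corollary \ref{cor: forget and tau} yields natural isomorphisms $i^*(\tau_n^k F) \cong \tau_n^k(i^* F)$ for every $k \geq 0$, compatible with the transition maps $\rho$, since each $\rho$ is natural in its input and the isomorphism of Corollary \ref{cor: forget and tau} is itself natural. Consequently, the two sequential diagrams
\[
i^* F \longrightarrow i^*(\tau_n F) \longrightarrow i^*(\tau_n^2 F) \longrightarrow \cdots
\]
and
\[
i^* F \longrightarrow \tau_n(i^* F) \longrightarrow \tau_n^2(i^* F) \longrightarrow \cdots
\]
are naturally isomorphic as diagrams in $\RE_0$.

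Second, I would argue that $i^*$ commutes with the sequential homotopy colimit defining $T_n$. Because we are using the coarse (i.e.\ underlying) model structure on $C_2 \T$, both limits and colimits in $\CE_0$ are computed on underlying spaces, and the standard model for a sequential homotopy colimit (a mapping telescope) is built levelwise out of pushouts, smash products with intervals, and sequential colimits, all of which $i^*$ preserves since it is simultaneously a left and a right adjoint (with left adjoint $(C_2)_+ \wedge -$ and right adjoint the cofree construction). Hence there is a natural isomorphism
\[
i^*(T_n F) \;=\; i^*\bigl(\hocolim_k \tau_n^k F\bigr) \;\cong\; \hocolim_k i^*(\tau_n^k F) \;\cong\; \hocolim_k \tau_n^k(i^* F) \;=\; T_n(i^* F),
\]
as desired.

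The only subtle point is confirming that $i^*$ genuinely preserves the specific homotopy colimit model used to define $T_n$, rather than merely the homotopy type. This is the main step one must check carefully, but it is immediate from the coarse model structure: $i^*$ preserves the generating (acyclic) cofibrations described in the projective model structure on $\CE_0$ and commutes with the tensoring over $\T$, so the telescope construction (a countable iterated pushout along cofibrations) is preserved on the nose.
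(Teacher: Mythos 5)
Your proof is correct and follows essentially the same route as the paper's: iterate Corollary \ref{cor: forget and tau} to identify the defining sequential diagrams, then commute $i^*$ past the homotopy colimit (which, as you note, follows because the coarse model structure makes $i^*$ preserve the relevant colimit constructions). The paper states this more tersely but the content is identical.
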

\begin{proof}
The forgetful functor commutes with homotopy colimits, hence the result follows by Corollary \ref{cor: forget and tau}, and the definition of $T_nF$ as the sequential homotopy colimit over iterative applications of $\tau_n$. 
\end{proof}

We can also compare the model structures. 

\begin{prop}\label{lem: forget QA on n-poly}
The adjoint pair
\[
\xymatrix@C+2cm{
n\poly\CE_0 \ar@<-1ex>[r]_{i^*} & n\poly\RE_0 \ar@<-1ex>[l]_{(C_2)_+ \wedge (-)}, \\
}
\]
is a Quillen adjunction between the $n$--polynomial model structures.
\end{prop}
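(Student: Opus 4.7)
The plan is to verify that the right adjoint $i^*$ is right Quillen for the $n$--polynomial model structures, leveraging two facts already established: that $i^*$ is right Quillen at the projective level (Proposition \ref{prop: QA for forget on input}), and that $i^* T_n \cong T_n i^*$ naturally (Lemma \ref{lem: forget and poly approx}). With the characterisations of fibrations and acyclic fibrations in the $n$--polynomial model structure in hand, the argument splits into two cases, one very short and the other requiring a small compatibility check.

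For the acyclic fibrations the argument is immediate: the $n$--polynomial model structure is a left Bousfield localisation of the projective model structure, and such a localisation leaves the class of acyclic fibrations unchanged. Hence the acyclic fibrations in $n\poly\CE_0$ coincide with the projective acyclic fibrations, and these are preserved by $i^*$ via Proposition \ref{prop: QA for forget on input}.

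For the fibrations, I would take $f \colon E \to F$ to be an $n$--polynomial fibration in $\CE_0$, so that $f$ is a projective fibration and the $\eta$--naturality square for $T_n$ is a homotopy pullback in $\CE_0$. Applying $i^*$ and invoking Lemma \ref{lem: forget and poly approx}, the resulting square in $\RE_0$ is naturally isomorphic to the $\eta$--naturality square for $i^*f$; this uses that the natural isomorphism $i^* T_n \cong T_n i^*$ is compatible with the unit $\eta$, which in turn follows because $\eta$ is assembled as the canonical map into the sequential homotopy colimit defining $T_n$, each stage of which commutes with $i^*$ by Corollary \ref{cor: forget and tau}. Since the coarse model structure on $C_2\T$ has underlying weak equivalences and fibrations, homotopy pullbacks in the projective model structure on $\CE_0$ are computed levelwise in $C_2\T$ and reduce to homotopy pullbacks of the underlying diagrams in $\T$; consequently $i^*$ preserves homotopy pullbacks. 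It follows that $i^*f$ is a projective fibration satisfying the $T_n$ homotopy pullback condition, and hence is an $n$--polynomial fibration.

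The main (mild) subtlety is precisely these two compatibility statements: that the isomorphism $i^* T_n \cong T_n i^*$ intertwines the units $\eta$, and that $i^*$ preserves homotopy pullbacks. Both are formal consequences of the explicit construction of $T_n$ as a sequential homotopy colimit of iterates of $\tau_n$ and of the choice of the coarse model structure on $C_2\T$, so no essentially new computation is needed.
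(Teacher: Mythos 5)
Your proof is correct and in the same spirit as the paper's, with the same key ingredient (Lemma \ref{lem: forget and poly approx} giving $i^*T_n \cong T_n i^*$), but you verify a slightly different sufficient condition and supply more detail. The paper shows $i^*$ preserves $T_n$--equivalences (i.e.\ all weak equivalences of the localised structure) and then asserts that ``a similar argument shows the right adjoint preserves fibrations.'' You instead check preservation of acyclic fibrations and of fibrations directly, which is the textbook criterion. Your acyclic fibration case is particularly clean: since a left Bousfield localisation does not change the acyclic fibrations, these are just the projective ones, and Proposition \ref{prop: QA for forget on input} handles them immediately. Your fibration case is precisely the ``similar argument'' the paper leaves to the reader, made explicit: you use the stated characterisation of $n$--polynomial fibrations as projective fibrations whose $\eta$--square is a homotopy pullback, the compatibility of the isomorphism $i^*T_n \cong T_n i^*$ with the unit $\eta$ (which, as you note, follows from the construction of $T_n$ as a sequential homotopy colimit of $\tau_n$--iterates and Corollary \ref{cor: forget and tau}), and the fact that $i^*$ preserves homotopy pullbacks because the coarse model structure on $C_2\T$ has underlying weak equivalences and fibrations. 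Nothing is missing; if anything your version is the more complete write-up.
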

\begin{proof}
Let $f \colon E \to F$ be a weak equivalence in $n\poly\CE_0$, then $T_nf \colon T_nE \to T_nF$ is a levelwise weak equivalence. The right adjoint preserves polynomial approximations by Lemma \ref{lem: forget and poly approx}, hence 
\[
T_ni^*f \colon T_n(i^*E) \to T_n(i^*F),
\]
is a levelwise weak equivalence in $\RE_0$. A similar argument shows that the right adjoint preserves fibrations.
\end{proof}

\subsection{Change of universe}

There are two change of universe functors which we wish to consider. We start with the more straightforward change of universe functor, $\IUR : \E_0^\bU \to \E_0^\bR$, defined via precomposition with the inclusion of categories $j: \J_0^\bR \to \J_0^\bU$.

\begin{lem}\label{lem: agreement of tau_n}
If $F \in \E_0^\mathbf{U}$, then the inclusion of categories $\CJ_0 \hookrightarrow \J_0^\bU$ induces a natural isomorphism between $\IUR  \tau_n^\mathbf{U} F$ and $\tau_n^\mathbf{R} \IUR  F$.
\end{lem}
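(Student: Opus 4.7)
The plan is to express both sides as enriched ends over the respective indexing categories, and then apply the cofinality statement from Example \ref{ex: cofinal R to U} together with the Weiss-style reformulation of $\tau_n$ in terms of mapping spaces out of the sphere bundle $S\gamma_{n+1}$. By \cite[Proposition 5.2]{We95} (and its analogues for the other calculi), $\tau_n^\bF G(V)$ admits a description as the $\T$-enriched end
\[
\tau_n^\bF G(V) \;\cong\; \int_{W \in \J_0^\bF} \Map_\ast\!\bigl(S\gamma_{n+1}^\bF(V,W)_+,\, G(W)\bigr),
\]
for any $G \in \E_0^\bF$. This is the key reformulation that converts the homotopy limit over $\mathbf{F}_{\leq n+1}$ into something cofinality-friendly.

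Specialising to $F \in \E_0^\bU$ and $V_\C \in \CJ_0$, and writing $V = j(V_\C)$, we obtain
\[
(\IUR \tau_n^\bU F)(V_\C) \;=\; (\tau_n^\bU F)(jV_\C) \;\cong\; \int_{W \in \UJ_0} \Map_\ast\!\bigl(S\gamma_{n+1}^\bU(jV_\C,W)_+,\, F(W)\bigr).
\]
I would then apply Example \ref{ex: cofinal R to U} (cofinality of the inclusion $j : \CJ_0 \hookrightarrow \UJ_0$ for $\T$-enriched ends) to rewrite this as an end indexed by $\CJ_0$:
\[
\int_{W_\C \in \CJ_0} \Map_\ast\!\bigl(S\gamma_{n+1}^\bU(jV_\C, jW_\C)_+,\, F(jW_\C)\bigr).
\]

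The final step is to identify the integrand with the corresponding one in calculus with reality. By direct inspection of Definition \ref{def: jet categories}, the total space $\gamma_{n+1}^\bU(jV_\C, jW_\C)$ is defined by exactly the same formula as $\gamma_{n+1}^\bR(V_\C, W_\C)$, both consisting of pairs $(f,x)$ with $f$ a complex-linear isometry and $x \in \C^{n+1} \otimes_\C f(V_\C)^\perp$; in particular the underlying sphere bundles agree. Combined with $F(jW_\C) = (\IUR F)(W_\C)$ by definition of $\IUR$, this yields
\[
\int_{W_\C \in \CJ_0} \Map_\ast\!\bigl(S\gamma_{n+1}^\bR(V_\C,W_\C)_+,\, (\IUR F)(W_\C)\bigr) \;\cong\; \tau_n^\bR(\IUR F)(V_\C),
\]
and naturality in $V_\C$ and in $F$ follows from the naturality of each intermediate isomorphism. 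The only technical obstacle is to ensure the enriched end reformulation of $\tau_n$ is available in all three calculi; this is a standard reworking of Weiss's sphere-bundle description, but once in place, cofinality of $j$ does all the real work and gives the natural isomorphism directly, with no need for further induction or hypotheses on $F$.
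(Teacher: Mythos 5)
Your proposal is correct and matches the paper's own argument essentially line for line: both invoke the Weiss sphere-bundle reformulation of $\tau_n$ (the unitary version of \cite[Proposition 5.2]{We95}, together with its ``with reality'' analogue \cite[Proposition 2.9]{Ta20b}) to rewrite both sides as $\T$-enriched ends, and then conclude via the cofinality of $j \colon \CJ_0 \hookrightarrow \UJ_0$ from Proposition \ref{prop: cofinality of ends} and Example \ref{ex: cofinal R to U}. Your explicit identification of the integrands in the final step (that $S\gamma_{n+1}^\bU(jV_\C, jW_\C) = S\gamma_{n+1}^\bR(V_\C, W_\C)$ and $F(jW_\C) = (\IUR F)(W_\C)$) is left implicit in the paper but is exactly what makes the cofinality argument land, so this is a welcome clarification rather than a departure.
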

\begin{proof}
By the unitary version of \cite[Proposition 5.2]{We95}, and  \cite[Proposition 2.9]{Ta20b}, there are natural isomorphisms
\[
\begin{split}
\IUR  (\tau_n^\mathbf{U} F)(V_\C) &\cong \nat_{\E_0^\mathbf{U}}(S\gamma_{n+1}^\mathbf{U}(V_\C, -)_+ , F) \\
\tau_n^\mathbf{R} (\IUR F)(V_\C) &\cong \nat_{\E_0^\mathbf{R}}(S\gamma_{n+1}^\mathbf{R}(V_\C, -)_+ , \IUR F).\\
\end{split}
\]
These spaces of natural transformations are $\T$--enriched ends, given by 
\[
\int_{U \in \J_0^\mathbf{U}} \Map_*(S\gamma_{n+1}^\mathbf{U}(V_\C, U), F(U)),
\]
and
\[
\int_{W_\C \in \J_0^\mathbf{R}} \Map_*(S\gamma_{n+1}^\mathbf{R}(V_\C, W_\C), F(W_\C)),
\]
respectively. Proposition \ref{prop: cofinality of ends} and Example \ref{ex: cofinal R to U} yield a natural isomorphism between these $\T$--enriched ends, which is induced by the inclusion of categories $j : \CJ_0 \to \UJ_0$.
\end{proof}

\begin{cor}\label{cor: poly and change of universe U to R}
 If $F \in \E_0^\mathbf{U}$ is $n$-polynomial, then $\IUR  F$ is $n$-polynomial in $\RE_0$. 
\end{cor}
\begin{proof}
We must show that the map
\[
\IUR F \longrightarrow \tau_n^\mathbf{R} (\IUR F),
\]
is a levelwise weak equivalence. By Lemma~\ref{lem: agreement of tau_n}, this map is equivalent to the map
\[
\IUR ( F \longrightarrow \tau_n^\mathbf{U}F),
\]
hence a levelwise weak equivalence since $F$ is $n$-polynomial, and $\IUR$ preserves levelwise weak equivalences.
\end{proof}

By the construction of the homeomorphism between the homotopy limit $\tau_n F(V)$, and the space of natural transformation, $\nat_{\E_0}(\J(V,-), F)$, see \cite[Proposition 5.2]{We95}, the homeomorphism of Lemma \ref{lem: agreement of tau_n} for varying iterations of $\tau_n$, are compatible, in that, the diagram 
\[
\xymatrix{
\IUR( (\tau_n^\bU)^kF)(V_\C) \ar[r] \ar[d] & \IUR((\tau_n^\bU)^{k+1})F(V_\C) \ar[d] \\
(\tau_n^\bR)^k(\IUR F)(V_\C) \ar[r] & (\tau_n^\bR)^{k+1}(\IUR F)(V_\C)
}
\]
commutes, where the vertical arrows are induced by the homeomorphism of Lemma \ref{lem: agreement of tau_n}.

\begin{prop}\label{thm: I_U^R preserves poly approx}
If $F \in \E_0^\mathbf{U}$, then $T_n^\mathbf{R} \IUR  F$ is naturally isomorphic to $\IUR  T_n^\mathbf{U} F$.
\end{prop}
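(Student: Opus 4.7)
The plan is to leverage Lemma \ref{lem: agreement of tau_n} iteratively, together with the description of $T_n F$ as the sequential homotopy colimit of iterates of $\tau_n F$ (Definition \ref{def: poly approx}).

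First, I would iterate Lemma \ref{lem: agreement of tau_n}. Since the natural isomorphism $\IUR \tau_n^\bU G \cong \tau_n^\bR \IUR G$ is natural in the input functor $G \in \E_0^\bU$, applying it repeatedly with $G = (\tau_n^\bU)^{k-1} F$ and invoking the fact that $\IUR$ preserves natural isomorphisms yields, by induction on $k \geq 0$, a natural isomorphism
\[
\IUR (\tau_n^\bU)^k F \;\cong\; (\tau_n^\bR)^k \IUR F.
\]
Next, the compatibility square displayed immediately after Lemma \ref{lem: agreement of tau_n} shows that these isomorphisms intertwine the canonical maps $\rho$ in the two towers. Hence they assemble into an isomorphism of sequential diagrams in $\E_0^\bR$,
\[
\bigl\{ \IUR (\tau_n^\bU)^k F \bigr\}_{k \geq 0} \;\xrightarrow{\;\cong\;}\; \bigl\{ (\tau_n^\bR)^k \IUR F \bigr\}_{k \geq 0}.
\]

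The remaining step is to commute $\IUR$ past the sequential homotopy colimit. This is where I would be most careful, but there is no genuine obstacle: the functor $\IUR$ is defined by precomposition with the inclusion of categories $j \colon \CJ_0 \hookrightarrow \UJ_0$, so it is computed objectwise by $(\IUR G)(V_\C) = G(j(V_\C))$. Homotopy colimits in the categories $\E_0^\bU$ and $\E_0^\bR$ are formed pointwise in $\T$ (resp.\ $C_2 \T$ with the underlying model structure), and for any fixed $V_\C \in \CJ_0$ the objectwise evaluation functor strictly commutes with sequential homotopy colimits of spaces. Consequently,
\[
\IUR T_n^\bU F \;=\; \IUR \hocolim_{k} (\tau_n^\bU)^k F \;\cong\; \hocolim_{k} \IUR (\tau_n^\bU)^k F.
\]

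Combining the two displays, we obtain
\[
\IUR T_n^\bU F \;\cong\; \hocolim_{k} (\tau_n^\bR)^k \IUR F \;=\; T_n^\bR \IUR F,
\]
where the final equality is the definition of $T_n^\bR$ applied to $\IUR F \in \E_0^\bR$. Naturality in $F$ is inherited from the naturality established in Lemma \ref{lem: agreement of tau_n} together with the functoriality of $\hocolim$. The main subtlety, as indicated above, is to ensure that the iterated isomorphisms are compatible with the structure maps of the towers; this is precisely the content of the commutative square appearing between Lemma \ref{lem: agreement of tau_n} and the present proposition, so no additional work is needed.
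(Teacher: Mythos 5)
Your proof is correct and takes essentially the same route as the paper: the paper's own argument simply cites Lemma \ref{lem: agreement of tau_n} together with the definition of $T_n$ as a sequential homotopy colimit of iterates of $\tau_n$, and what you have written is a careful unpacking of exactly that — iterating the lemma, invoking the compatibility square after it to assemble an isomorphism of towers, and using that $\IUR$ (being a precomposition functor) commutes with the pointwise sequential homotopy colimit.
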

\begin{proof}
This follows from Lemma  \ref{lem: agreement of tau_n}, and the definition of the $n$-polynomial approximation functor as the sequential homotopy colimit over iterated application of $\tau_n$. 
\end{proof}

This should not be surprising to the reader familiar with the various versions of functor calculus. For example, in unitary calculus, say, indexed on $\C \otimes \R^\infty$, the $1$-polynomial approximation of the functor given by 
\[
V_\C \mapsto \bigslant{\U( \C^\infty \oplus V_\C)}{\U(V_\C)}
\]
is levelwise weakly equivalent to the functor given by 
\[
\Omega Q[(S^{V_\C})_{h\U(1)}]
\]
which is also the $1$-polynomial approximation of the same functor, this time, considered in the calculus with reality, see \cite[Example 8.9]{Ta20b}.

The real strength of our approach is highlighted in the following. In comparison with $\IRU$, the change of universe functor $\IUR$ is significantly more straightforward. Since these functors are inverse equivalences of categories, we can use our results for changing from a calculus indexed on $\C^\infty$ to a calculus indexed on $\C \otimes \R^\infty$ via $\IUR$, to obtain analogous results for the reverse direction, $\IRU$.

\begin{lem}\label{lem: I_R^U and tau_n}
If $F \in \RE_0$, then there is natural isomorphism
\[
\tau_n^\mathbf{U} \IRU  F \longrightarrow \IRU  \tau_n^\mathbf{R} F. 
\]
\end{lem}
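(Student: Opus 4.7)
The plan is to bootstrap from the already-established agreement of $\tau_n$ in the opposite direction, namely Lemma \ref{lem: agreement of tau_n}, using the fact that $\IUR$ and $\IRU$ are mutually inverse equivalences of categories (so in particular they preserve and reflect all natural isomorphisms).

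First, I would apply Lemma \ref{lem: agreement of tau_n} to the unitary functor $\IRU F \in \E_0^{\mathbf{U}}$ to obtain a natural isomorphism
\[
\IUR\, \tau_n^{\mathbf{U}}\, \IRU F \;\cong\; \tau_n^{\mathbf{R}}\, \IUR\, \IRU F.
\]
Since $\IUR \circ \IRU \cong \mathrm{id}_{\RE_0}$ by the equivalence of categories established earlier, the right hand side is naturally isomorphic to $\tau_n^{\mathbf{R}} F$. Applying the inverse equivalence $\IRU$ to both sides and using $\IRU \circ \IUR \cong \mathrm{id}_{\E_0^{\mathbf{U}}}$ on the left hand side then yields the desired natural isomorphism
\[
\tau_n^{\mathbf{U}}\, \IRU F \;\cong\; \IRU\, \tau_n^{\mathbf{R}} F.
\]

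There is essentially no obstacle here beyond bookkeeping: the genuine content was already captured in Lemma \ref{lem: agreement of tau_n}, which used cofinality of $\T$--enriched ends along $j : \CJ_0 \hookrightarrow \UJ_0$ (Proposition \ref{prop: cofinality of ends} and Example \ref{ex: cofinal R to U}) together with the natural-transformation description of $\tau_n$. The only thing worth being mildly careful about is that the isomorphism produced is natural in $F$, but this follows because each ingredient (the equivalence of categories unit/counit, and the isomorphism in Lemma \ref{lem: agreement of tau_n}) is already natural in its input. Alternatively, and perhaps more cleanly for the reader, one can reprove the statement directly in the style of Lemma \ref{lem: agreement of tau_n}: write $\tau_n^{\mathbf{U}}(\IRU F)(V)$ as the $\T$--enriched end
\[
\int_{U \in \J_0^{\mathbf{U}}} \Map_*\!\bigl(S\gamma_{n+1}^{\mathbf{U}}(V,U),\, \IRU F(U)\bigr),
\]
then apply cofinality along $j$ to rewrite this end as one indexed over $\CJ_0$, and finally identify the resulting end with $\IRU \tau_n^{\mathbf{R}} F(V)$ via the definition of $\IRU$. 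Either route works; the abstract one is shorter and makes the role of the equivalence of categories transparent.
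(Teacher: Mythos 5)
Your proof is correct and follows essentially the same route as the paper: apply Lemma \ref{lem: agreement of tau_n} to $\IRU F$, simplify using $\IUR \circ \IRU \cong \mathrm{id}$, and postcompose with $\IRU$ using the equivalence of categories. The alternative direct argument via cofinality that you sketch at the end is also valid but not what the paper does.
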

\begin{proof}
Lemma \ref{lem: agreement of tau_n} gives a natural isomorphism between $\tau_n^\bR (\IUR E)$ and $\IUR(\tau_n^\bU E)$, for all  unitary functors $E$. In particular, taking $E= \IRU F$, we obtain a natural isomorphism between $\tau_n^\bR (\IUR (\IRU F))$ and $\IUR(\tau_n^\bU (\IRU F))$, for all $F \in \RE_0$. Postcomposing both sides of this natural isomorphism with $\IRU$ and using that $(\IRU, \IUR)$ are adjoint equivalences of categories, yields a natural isomorphism between $\IRU (\tau_n^\bR F)$ and $\tau_n^\bU (\IRU F)$.
\end{proof}

\begin{cor}\label{cor: poly and change of universe R to U}
If $F \in \RE_0$ is $n$--polynomial, then $\IRU F$ is $n$--polynomial.
\end{cor}
\begin{proof}
The argument is analogous to Corollary~\ref{cor: poly and change of universe U to R} using Lemma~\ref{lem: I_R^U and tau_n} in place of Lemma~\ref{lem: agreement of tau_n}.
\end{proof}

We can use a similar argument to give a relation between the change of universe functor $\IRU $ and the polynomial approximation. 

\begin{prop}\label{thm: change of universe and poly approx}
If $F \in \E_0^\mathbf{R}$, then $T_n^\mathbf{U} \IRU  F$ is naturally isomorphic  to $\IRU  T_n^\mathbf{R} F$.
\end{prop}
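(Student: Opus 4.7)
The plan is to bootstrap from Lemma \ref{lem: I_R^U and tau_n}, which supplies a natural isomorphism $\tau_n^\mathbf{U} \IRU F \cong \IRU \tau_n^\mathbf{R} F$, up to the level of the sequential homotopy colimit defining $T_n$. Since $T_n$ is by definition (Definition \ref{def: poly approx}) the homotopy colimit of the sequence $F \to \tau_n F \to \tau_n^2 F \to \cdots$, and since $\IRU$, being both a left and right adjoint (Proposition \ref{prop: QA for R and U}), commutes with sequential homotopy colimits, it suffices to promote the natural isomorphism of Lemma \ref{lem: I_R^U and tau_n} to a natural isomorphism of the two underlying sequential diagrams.

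Concretely, I would first iterate Lemma \ref{lem: I_R^U and tau_n} to obtain, for each $k \geq 0$, a natural isomorphism $\varphi_k \colon \tau_n^{\bU,k}(\IRU F) \xrightarrow{\cong} \IRU(\tau_n^{\bR,k} F)$, by induction: apply $\tau_n^\mathbf{U}$ to $\varphi_{k-1}$ and then compose with the natural isomorphism $\tau_n^\mathbf{U}\IRU(\tau_n^{\bR,k-1}F) \cong \IRU \tau_n^\mathbf{R}(\tau_n^{\bR,k-1}F)$ given by Lemma \ref{lem: I_R^U and tau_n} applied to $\tau_n^{\bR,k-1}F$. The main (albeit modest) obstacle here is checking that the resulting isomorphisms are compatible with the canonical maps $\rho \colon \tau_n^k \to \tau_n^{k+1}$; that is, that the squares
\[
\xymatrix@C+1cm{
\tau_n^{\bU,k}(\IRU F) \ar[r]^{\rho} \ar[d]_{\varphi_k}^\cong & \tau_n^{\bU,k+1}(\IRU F) \ar[d]^{\varphi_{k+1}}_\cong \\
\IRU(\tau_n^{\bR,k} F) \ar[r]_{\IRU \rho} & \IRU(\tau_n^{\bR,k+1}F)
}
\]
commute. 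This reduces, by the definition of $\varphi_k$, to verifying that the natural isomorphism of Lemma \ref{lem: I_R^U and tau_n} intertwines the structural map $\rho$ on the $\mathbf{U}$--side with $\IRU\rho$ on the $\mathbf{R}$--side; tracing through the construction in Lemma \ref{lem: agreement of tau_n} (which identifies $\tau_n$ with a space of enriched natural transformations out of the sphere bundle of the complement bundle) and Lemma \ref{lem: I_R^U and tau_n} (which uses the equivalence of categories $(\IRU,\IUR)$), both maps are induced by the unit of the standard $\tau_n$--construction, so the required compatibility is automatic.

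Having assembled a natural isomorphism of sequential diagrams $\tau_n^{\bU,\bullet}(\IRU F) \cong \IRU(\tau_n^{\bR,\bullet} F)$, we take $\hocolim$ on both sides. Since $\IRU$ is a left adjoint (indeed, an equivalence of categories), it commutes with the homotopy colimit up to natural isomorphism, and the left-hand side is by definition $T_n^\mathbf{U}(\IRU F)$ while the right-hand side becomes $\IRU(\hocolim_k \tau_n^{\bR,k} F) = \IRU T_n^\mathbf{R} F$. Naturality in $F$ is inherited from the naturality of $\varphi_k$ in each degree $k$, completing the proof.
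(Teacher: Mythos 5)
Your proof is correct, but it takes a longer route than the paper's, which is a one-liner. The paper simply observes that Proposition~\ref{thm: I_U^R preserves poly approx} already gives a natural isomorphism $T_n^\bR(\IUR E) \cong \IUR(T_n^\bU E)$ for all $E \in \E_0^\bU$, specialises to $E = \IRU F$, and then postcomposes with $\IRU$, using that $(\IRU,\IUR)$ is an adjoint equivalence; the whole $T_n$-level statement transports in a single step, with no need to reopen the iteration-and-colimit argument. You instead transport at the level of $\tau_n$ (i.e.\ Lemma~\ref{lem: I_R^U and tau_n}, which the paper proves by exactly this transport trick applied to Lemma~\ref{lem: agreement of tau_n}) and then rebuild the sequential colimit argument in the $\IRU$ direction. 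This works, but it forces you to re-verify the compatibility of your $\varphi_k$'s with the structural maps $\rho$, which you call ``automatic.'' It is not quite automatic: it follows from the corresponding compatibility in the $\IUR$ direction (recorded in the paper just before Proposition~\ref{thm: I_U^R preserves poly approx}) together with naturality of the unit and counit of the $(\IRU,\IUR)$ equivalence, a short diagram chase you should spell out if you take this route. The moral is that once a statement is proved for $\IUR$, the equivalence of categories lets you transport the strongest available form of it; transporting only the $\tau_n$-level fact and then redoing the colimit step duplicates work and, here, buries a nontrivial compatibility check under the word ``automatic.''
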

\begin{proof}
Proposition \ref{thm: I_U^R preserves poly approx} gives a natural isomorphism between $T_n^\bR (\IUR E)$ and $\IUR (T_n^\bU E)$ for all $E \in \E_0^\bU$. Taking $E = \IRU F$, postcomposing both sides of the natural isomorphism with $\IRU$, and using that $(\IRU, \IUR)$ are adjoint equivalences of categories yields the result. 
\end{proof}

Using the fact that the pair $(\IRU , \IUR )$ is an adjoint equivalence of categories we can show that polynomial functors in unitary calculus indexed on $\C^\infty$ are completely characterised by polynomial functors in unitary calculus indexed on $\C \otimes \R^\infty$. 

\begin{prop}\label{thm: poly and change of universe iff}\hfill
\begin{enumerate}
\item $F \in \E_0^\bU$ is $n$--polynomial if and only if $\IUR  F$ is $n$--polynomial.
\item $F \in \RE_0$ is $n$--polynomial if and only if $\IRU  F$ is $n$--polynomial.
\end{enumerate}
\end{prop}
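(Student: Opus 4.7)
The plan is to leverage the forward directions already established in Lemma \ref{lem: poly and change of universe U to R} (for $\IUR$) and Lemma \ref{lem: poly and change of universe R to U} (for $\IRU$), together with the fact that $(\IRU, \IUR)$ is an adjoint equivalence of categories, to deduce the reverse implications essentially for free. The crucial observation is that being $n$--polynomial is invariant under natural isomorphism, since the defining condition (Definition \ref{def: polynomial}) is a levelwise weak equivalence condition, and natural isomorphisms are levelwise weak equivalences.

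For part $(1)$, the forward direction is exactly Lemma \ref{lem: poly and change of universe U to R}. Conversely, suppose $\IUR F \in \RE_0$ is $n$--polynomial. Applying Lemma \ref{lem: poly and change of universe R to U} to $\IUR F$, we see that $\IRU(\IUR F) \in \E_0^\bU$ is $n$--polynomial. But the equivalence of categories gives a natural isomorphism $\IRU \circ \IUR \cong \id_{\E_0^\bU}$, hence $F \cong \IRU(\IUR F)$ is $n$--polynomial.

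For part $(2)$, the argument is symmetric: the forward direction is Lemma \ref{lem: poly and change of universe R to U}, and for the converse, if $\IRU F \in \E_0^\bU$ is $n$--polynomial then Lemma \ref{lem: poly and change of universe U to R} gives that $\IUR(\IRU F) \in \RE_0$ is $n$--polynomial, which combined with the natural isomorphism $\IUR \circ \IRU \cong \id_{\RE_0}$ yields that $F$ is $n$--polynomial.

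There is essentially no obstacle here: the substantive content was already carried out in proving the forward implications (where the cofinality of $\T$--enriched ends via Proposition \ref{prop: cofinality of ends} and Example \ref{ex: cofinal R to U} did the real work for $\IUR$, and a Five Lemma argument using that $\IRU$ preserves homotopy fibre sequences handled $\IRU$). Once both change of universe functors are known to preserve the $n$--polynomial property, the equivalence of categories formally upgrades each one-sided implication to a biconditional.
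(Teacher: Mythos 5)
Your proposal is correct and is essentially identical to the paper's own proof: both use the forward implications from Lemmas \ref{lem: poly and change of universe U to R} and \ref{lem: poly and change of universe R to U} together with the adjoint equivalence of categories $(\IRU, \IUR)$ to obtain the reverse implications. The only difference is that you make explicit the (implicitly used) observation that the $n$--polynomial property is invariant under natural isomorphism.
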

\begin{proof}
One direction of $(1)$ is given in Corollary \ref{cor: poly and change of universe U to R}. For the other direction, let $\IUR F$ be $n$--polynomial. Then by Corollary \ref{cor: poly and change of universe R to U}, $\IRU (\IUR F)$ is $n$--polynomial. Since $(\IRU , \IUR )$ is an adjoint equivalence of categories, this last is naturally isomorphic to $F$, and the result follows. 

One direction of $(2)$ is given in Corollary \ref{cor: poly and change of universe R to U}. In the other direction, let $\IRU  F$ be $n$--polynomial. By Corollary \ref{cor: poly and change of universe U to R}, $\IUR \IRU F$ is $n$--polynomial and the result follows again by the equivalence of categories given by the change of universe functors. 
\end{proof}

\subsection{Complete picture}

We can use the above theorem to give a complete characterisation of polynomial functors in the calculi. 

\begin{prop}\label{thm: recover poly property}
A functor with reality, $F$, is $n$--polynomial if and only if $\IRU  (i^*F)$ is $n$--polynomial in $\E_0^\mathbf{U}$.
\end{prop}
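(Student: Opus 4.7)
The proof is a direct two-step composition of results already established in this section. My plan is to chain together the ``forgetting the $C_2$--action'' step with the ``change of universe'' step, each of which has already been shown to preserve and reflect the property of being $n$--polynomial.

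More precisely, I would first invoke Lemma \ref{lem: poly and forget}, which asserts that a functor with reality $F \in \CE_0$ is $n$--polynomial if and only if its underlying non-equivariant functor $i^*F \in \RE_0$ is $n$--polynomial. This is immediate from the choice of the underlying (coarse) model structure on $C_2\T$, under which a map is a weak equivalence if and only if it is a weak equivalence after forgetting the $C_2$--action, combined with the fact that $i^*$ commutes with homotopy limits.

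Next, I would apply part (2) of Proposition \ref{thm: poly and change of universe iff} to the functor $i^*F \in \RE_0$, which gives that $i^*F$ is $n$--polynomial if and only if $\IRU(i^*F) \in \E_0^\mathbf{U}$ is $n$--polynomial. Concatenating these two biconditionals produces the desired equivalence.

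There is no real obstacle here, since both ingredients have already been established: Lemma \ref{lem: poly and forget} handles the equivariant-to-underlying passage, and the non-trivial direction of Proposition \ref{thm: poly and change of universe iff} rests on Lemma \ref{lem: poly and change of universe R to U} together with the adjoint equivalence of categories $(\IRU, \IUR)$. The statement to be proved is thus purely a matter of transitivity of ``if and only if'', and can be written in a single displayed chain
\[
F \text{ is } n\text{--polynomial} \iff i^*F \text{ is } n\text{--polynomial} \iff \IRU(i^*F) \text{ is } n\text{--polynomial}. \qed
\]
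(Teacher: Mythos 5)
Your proof is correct and matches the paper's own argument exactly: the paper also proves this by chaining Lemma \ref{lem: poly and forget} with Proposition \ref{thm: poly and change of universe iff}(2).
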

\begin{proof}
By Lemma \ref{lem: poly and forget}, $F$ is $n$--polynomial if and only if $i^*F \in \RE_0$ is $n$--polynomial. It follows by Proposition \ref{thm: poly and change of universe iff}(2) that for $i^*F \in \RE_0$ to be $n$--polynomial, it is necessary and sufficient for $\IRU(i^*F) \in \E_0^\bU$ to be $n$--polynomial. 
\end{proof}

Combining Lemma \ref{lem: forget and poly approx} and Proposition \ref{thm: change of universe and poly approx} we can directly recover the $n$--polynomial approximation functor for unitary calculus from the $n$--polynomial approximation functor for calculus with reality. 

\begin{thm}\label{thm: recover polys}
For $F \in \CE_0$, there is a natural isomorphism between $T_n(\IRU (i^*F))$ and $\IRU (i^*(T_nF))$. 
\end{thm}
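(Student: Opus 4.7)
The plan is to chain together the two natural isomorphisms already established in this section, applying them in sequence to decompose the operation of forget-then-change-universe commuted past $T_n$. Starting from $F \in \CE_0$, I will first move the forgetful functor $i^*$ past the polynomial approximation, then move the change of universe $\IRU$ past the polynomial approximation. The proof therefore reduces to a two-step application of previous results, with no further analysis of homotopy limits or enriched ends required.

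More precisely, the first step invokes Lemma \ref{lem: forget and poly approx}, which provides a natural isomorphism $i^*(T_n F) \cong T_n^{\bR}(i^* F)$ in $\RE_0$. Applying the change of universe functor $\IRU \colon \RE_0 \to \E_0^{\bU}$ to both sides, and using that $\IRU$ preserves isomorphisms (being a functor, in fact an equivalence of categories), I get a natural isomorphism
\[
\IRU(i^*(T_n F)) \;\cong\; \IRU(T_n^{\bR}(i^* F)).
\]
For the second step, I apply Proposition \ref{thm: change of universe and poly approx} to the object $i^*F \in \RE_0$, which yields a natural isomorphism $\IRU(T_n^{\bR}(i^*F)) \cong T_n^{\bU}(\IRU(i^*F))$. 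Composing the two isomorphisms gives the desired natural isomorphism
\[
\IRU(i^*(T_n F)) \;\cong\; T_n^{\bU}(\IRU(i^* F)).
\]

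There is essentially no obstacle here, since all the hard work has been done: Lemma \ref{lem: forget and poly approx} handled the compatibility of forgetting the $C_2$-action with homotopy colimits and the $\tau_n$ construction (via Corollary \ref{cor: forget and tau}), while Proposition \ref{thm: change of universe and poly approx} reduced the change-of-universe compatibility to the cofinality statement in Example \ref{ex: cofinal R to U}, packaged through Lemma \ref{lem: I_R^U and tau_n} and the adjoint equivalence $(\IRU, \IUR)$. The only thing to verify is naturality in $F$, but this is automatic since both isomorphisms in the chain are natural in their inputs, and $\IRU$ and $i^*$ are functors, so their composite respects the naturality. Thus the proof is simply a one-line concatenation of the two established naturality statements, and no additional bookkeeping about the polynomial tower or the homotopy fibre sequences is needed at this stage — those will enter in Section \ref{section: Taylor towers} when assembling the full Taylor tower comparison.
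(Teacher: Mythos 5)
Your proof is correct and is exactly the argument the paper has in mind: the text immediately preceding the theorem states that it follows by combining Lemma \ref{lem: forget and poly approx} with Proposition \ref{thm: change of universe and poly approx}, which is precisely the two-step chain you carry out.
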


We can now consider the $n$--polynomial model structures. We start with the change of universe adjunctions. We should not expect Quillen equivalences between polynomial model structures as the change of universe functors do not reflect levelwise weak equivalences. 

\begin{prop}\label{lem: QA for change of universe and poly}
The adjoint pairs 
\[
\xymatrix@C+2cm{
n\poly\RE_0 \ar@<3ex>[r]|{\IRU } \ar@<-3ex>[r]|{\IRU }  & n\poly\E_0^\mathbf{U} \ar[l]|{\IUR },
}
\]
are Quillen adjunctions between the $n$--polynomial model structures.
\end{prop}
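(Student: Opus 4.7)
The plan is to verify each of the two Quillen adjunctions by checking that the relevant left adjoint preserves cofibrations and acyclic cofibrations with respect to the $n$--polynomial model structures. Since the $n$--polynomial model structure is obtained from the projective model structure by left Bousfield localization, its cofibrations coincide with the projective cofibrations. By Proposition \ref{prop: QA for R and U}, both $\IRU$ and $\IUR$ are left Quillen for the projective model structures, so each of them preserves projective cofibrations. This immediately disposes of the cofibration condition for both of our adjoint pairs.

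For the acyclic cofibrations, what remains is to show that each left adjoint preserves $n$--polynomial equivalences, that is, those maps $f$ for which $T_n f$ is a levelwise weak equivalence. Here I would appeal to the natural isomorphism $T_n^\mathbf{U}(\IRU f) \cong \IRU (T_n^\mathbf{R} f)$ of Theorem \ref{thm: change of universe and poly approx} and the natural isomorphism $T_n^\mathbf{R}(\IUR f) \cong \IUR (T_n^\mathbf{U} f)$ of Proposition \ref{thm: I_U^R preserves poly approx}. In each case the problem reduces to the observation that the relevant change of universe functor preserves levelwise weak equivalences, from which $T_n$ applied to the image of $f$ under either left adjoint is seen to be a levelwise weak equivalence whenever $T_n f$ is.

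The main point to check is that $\IRU$ and $\IUR$ preserve \emph{all} levelwise weak equivalences, not merely weak equivalences between cofibrant or fibrant objects. For $\IUR$ this is immediate from its definition as precomposition with the inclusion $j \colon \CJ_0 \hookrightarrow \UJ_0$. For $\IRU$ I would use the explicit formula $\IRU E(V) \cong \UJ_0(\C^n, V) \wedge_{\U(n)} E(\C \otimes \R^n)$ for $\dim_\C V = n$, which exhibits $\IRU E(V)$ as a smash product over a free $\U(n)$--action on the relevant Stiefel manifold and is therefore homotopy invariant. I do not anticipate a genuine obstacle: the substantive content has already been absorbed into the preceding comparisons of $T_n$, and the verification of the two Quillen adjunctions amounts to a routine assembly of these ingredients.
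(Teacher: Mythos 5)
Your proof is correct, and it does differ in route from the paper's. The paper proves the statement by showing that the \emph{single} functor $\IUR$ is simultaneously left and right Quillen between the $n$--polynomial model structures. Left Quillen because $\IUR$ preserves generating cofibrations (hence projective cofibrations, hence localized cofibrations), and preserves levelwise weak equivalences together with polynomial approximations (Proposition \ref{thm: I_U^R preserves poly approx}), hence $T_n$--equivalences; right Quillen by invoking the standard fact that, for a Quillen adjunction descending to left Bousfield localizations, it suffices that the right adjoint preserve fibrant objects, which here is Lemma \ref{lem: poly and change of universe U to R}. You instead verify directly that \emph{both} left adjoints $\IRU$ and $\IUR$ preserve cofibrations and $T_n$--equivalences, covering both adjoint pairs from the left-adjoint side and bypassing the fibrant-objects criterion. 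Your route costs you the additional observation, not explicitly recorded in the paper, that $\IRU$ preserves all levelwise weak equivalences. That observation is correct, and in fact is simpler than the general ``free action'' homotopy invariance you gesture at: since $\dim_\C(V) = n$, the Stiefel manifold $\J^\mathbf{U}(\C^n,V)$ is a $\U(n)$--torsor, so $\UJ_0(\C^n,V)\wedge_{\U(n)} E(\C\otimes\R^n)$ is non-canonically homeomorphic to $E(\C\otimes\R^n)$ and levelwise weak equivalences are visibly preserved. (Equivalently, one can deduce it from $\IUR$ reflecting levelwise weak equivalences and $\IUR\circ\IRU \cong \id$.) Each approach has its merits: the paper's is more economical and leans on the characterization of fibrant objects in the localization; yours avoids the localization criterion at the price of making explicit the homotopical behaviour of $\IRU$.
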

\begin{proof}
We show that $\IUR $ is both left and right Quillen. Since $\IUR $ preserves levelwise weak equivalences, and by Theorem  \ref{thm: I_U^R preserves poly approx},  polynomial approximations, $\IUR $ preserves $T_n$--equivalences. The fact that $\IUR $ is left Quillen follows since it preserves generating cofibrations, and hence projective cofibrations.

To see that $\IUR $ is right Quillen it suffices to show that it preserves fibrant objects, this follows from Corollary \ref{cor: poly and change of universe U to R} since the fibrant objects of the $n$--polynomial model structure are precisely $n$--polynomial functors. 
\end{proof}

The end result of this section is the following commutative diagram 
\[
\xymatrix@C+1cm@R+1cm{
n\poly\CE_0  \ar@<-1.5ex>[d]_{1} \ar@<-1.5ex>[r]_{i^*}	& n\poly\RE_0 \ar@<-1.5ex>[d]_{1} \ar@<3ex>[r]|{\IRU } \ar@<-3ex>[r]|{\IRU }	\ar@<-1.5ex>[l]_{(C_2)_+ \wedge (-)}	& n\poly\E_0^\mathbf{U} \ar@<-1.5ex>[d]_{1} \ar[l]|{\IUR }	 \\
(n-1)\poly\CE_0   \ar@<-1.5ex>[u]_{1}	\ar@<-1.5ex>[r]_{i^*}	&(n-1)\poly\RE_0  \ar@<-1.5ex>[u]_{1} \ar@<3ex>[r]|{\IRU } \ar@<-3ex>[r]|{\IRU } \ar@<-1.5ex>[l]_{(C_2)_+ \wedge (-)}		& (n-1)\poly\E_0^\mathbf{U}  \ar@<-1.5ex>[u]_{1} \ar[l]|{\IUR }	  \\
}
\]
of Quillen adjunctions.

\section{Recovering homogeneous functors}\label{section: homogeneous}
A functor is $n$--homogeneous if it is both $n$--polynomial and has trivial $(n-1)$--polynomial approximation. The homogeneous functors are the building blocks of the calculi, as the $n$--th layer of the Taylor tower (i.e., the homotopy fibre of the map $T_nF \to T_{n-1}F$) is $n$--homogeneous. As these functors are characterised by polynomial properties, we can directly compare homogeneous functors with reality and  homogeneous unitary functors. 

\subsection{Forgetting the $C_2$--action}
We begin the recovery of $n$--homogeneous functors by considering the forgetful functor $i^*: \CE_0 \to \RE_0$.

\begin{lem}\label{lem: homog and forget}
A functor $F \in \CE_0$ is $n$--homogeneous if and only if $i^*F \in \RE_0$ is $n$--homogeneous.
\end{lem}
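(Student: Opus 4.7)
The plan is to reduce the statement to the two defining conditions of $n$--homogeneity---being $n$--polynomial and having trivial $(n-1)$--polynomial approximation---and then apply, for each of these conditions, a result already established for the forgetful functor $i^*$.

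First I would recall that, by definition, $F \in \CE_0$ is $n$--homogeneous precisely when $F$ is $n$--polynomial and $T_{n-1}^{\bR}F$ is levelwise weakly contractible, and similarly for $i^*F \in \RE_0$. The $n$--polynomial half is immediate from Lemma \ref{lem: poly and forget}, which tells us that $F$ is $n$--polynomial if and only if $i^*F$ is $n$--polynomial. So the whole lemma reduces to showing that $T_{n-1}^{\bR}F$ is levelwise weakly contractible in $\CE_0$ if and only if $T_{n-1}(i^*F)$ is levelwise weakly contractible in $\RE_0$.

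For this, I would combine two ingredients. On the one hand, Lemma \ref{lem: forget and poly approx} provides a natural isomorphism $i^*(T_{n-1}F) \cong T_{n-1}(i^*F)$, so the two statements of contractibility become statements about the same underlying space at each $V \in \CJ_0$, with one carrying a $C_2$--action and the other being its underlying space. On the other hand, the coarse model structure on $C_2\T$ that we have fixed on $\CE_0$ detects weak equivalences on underlying spaces; in particular, a $C_2$--space $X$ is weakly contractible in the coarse model structure if and only if $i^*X$ is a weakly contractible space. Applying this levelwise, $T_{n-1}^{\bR}F$ is levelwise weakly contractible in $\CE_0$ if and only if $i^*(T_{n-1}^{\bR}F)$, and hence $T_{n-1}(i^*F)$, is levelwise weakly contractible in $\RE_0$. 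Combined with the $n$--polynomial equivalence above, this gives the desired biconditional.

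I do not anticipate a genuine obstacle here: both necessary inputs are already in hand, and the only substantive point is the standard observation that, with the underlying model structure on $C_2\T$, the forgetful functor $i^*$ both preserves and reflects levelwise weak equivalences. The proof is essentially a formal concatenation of Lemma \ref{lem: poly and forget} and Lemma \ref{lem: forget and poly approx}.
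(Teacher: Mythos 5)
Your proof is correct and follows essentially the same route as the paper: decompose $n$--homogeneity into $n$--polynomiality plus vanishing $T_{n-1}$, apply Lemma \ref{lem: poly and forget} to the first condition, and apply Lemma \ref{lem: forget and poly approx} to the second. You are slightly more explicit than the paper in naming the key point that the coarse model structure on $C_2\T$ means $i^*$ both preserves \emph{and reflects} levelwise weak equivalences, which is what makes the ``if and only if'' work; the paper leaves this implicit.
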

\begin{proof}
By Lemma \ref{lem: poly and forget} we have that $F$ is $n$--polynomial if and only if $i^*F$ is $n$--polynomial. Moreover, since $T_{n-1}F$ vanishes, we have that  $i^*T_{n-1}F$ also vanishes.  By Lemma \ref{lem: forget and poly approx}, $i^*T_{n-1}F$ is naturally isomorphic to $T_{n-1}(i^*F)$, and the result follows. 
\end{proof}

We can also consider the model categories.

\begin{prop}\label{lem: QA on homog with forget}
The adjoint pair
\[
\xymatrix@C+2cm{
n\homog\CE_0 \ar@<-1ex>[r]_{i^*} & n\homog\RE_0 \ar@<-1ex>[l]_{(C_2)_+ \wedge (-)}, \\
}
\]
is a Quillen adjunction between the $n$--homogeneous model structures.
\end{prop}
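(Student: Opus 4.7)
The plan is to show that the right adjoint $i^*$ is right Quillen, which amounts to verifying that $i^*$ preserves fibrations and acyclic fibrations between the $n$--homogeneous model structures.

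First I would handle the fibrations. By Proposition \ref{prop: homog model structure}, the fibrations in $n\homog\E_0$ coincide with those of the $n$--polynomial model structure. Since Proposition \ref{lem: forget QA on n-poly} already tells us that $i^*$ is right Quillen between the $n$--polynomial model structures, it preserves these fibrations immediately.

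The main step is showing that $i^*$ preserves acyclic fibrations. Using Proposition \ref{characterisation of acyclic fibs}, acyclic fibrations of $n\homog\E_0$ are precisely the fibrations of the $(n-1)$--polynomial model structure that are also $D_n$--equivalences. Again by Proposition \ref{lem: forget QA on n-poly}, $i^*$ preserves $(n-1)$--polynomial fibrations, so it remains to show that $i^*$ preserves $D_n$--equivalences. For this I would invoke the characterization of Proposition \ref{prop: we in n--homog}: a map $f$ is a $D_n$--equivalence if and only if $\ind_0^n T_n f$ is a levelwise weak equivalence. Applying Lemma \ref{lem: forget and poly approx} (compatibility of $i^*$ with $T_n$) and Lemma \ref{lem: derivatives and forget} (compatibility of $i^*$ with $\ind_0^n$), there is a natural isomorphism
\[
\ind_0^n T_n (i^* f) \;\cong\; i^* \bigl( \ind_0^n T_n f \bigr).
\]
Since we use the underlying (coarse) model structure on $C_2\T$, the forgetful functor $i^*$ preserves levelwise weak equivalences, so the right--hand side is a levelwise weak equivalence whenever $\ind_0^n T_n f$ is, which gives the conclusion.

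I do not anticipate any serious obstacle here: the work has been done upstream in Lemmas \ref{lem: derivatives and forget}, \ref{lem: forget and poly approx}, and Proposition \ref{lem: forget QA on n-poly}, and the only mildly subtle point is remembering to use Proposition \ref{characterisation of acyclic fibs} to translate the acyclic fibration condition into one phrased entirely in terms of $(n-1)$--polynomial fibrations and $D_n$--equivalences, both of which are manifestly preserved by $i^*$.
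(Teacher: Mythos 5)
Your proof is correct, but it takes a genuinely different route from the paper's. The paper shows that $i^*$ preserves all weak equivalences directly: since $D_n$ is defined as the homotopy fibre of $T_n \to T_{n-1}$, and $i^*$ commutes with both $T_n$ and $T_{n-1}$ (Lemma \ref{lem: forget and poly approx}) as well as with homotopy fibres (being a levelwise construction), one gets a natural isomorphism $i^*D_nF \cong D_n(i^*F)$, and the conclusion follows at once. You instead go through the structural characterization of acyclic fibrations (Proposition \ref{characterisation of acyclic fibs}) and the derivative characterization of $D_n$--equivalences (Proposition \ref{prop: we in n--homog}), which additionally requires Lemma \ref{lem: derivatives and forget} for compatibility of $i^*$ with $\ind_0^n$. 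Both routes are valid; the paper's is more economical in that it doesn't need the model-categorical machinery of Section 1.3, whereas yours has the virtue of exactly mirroring the strategy the paper later uses for the change-of-universe functors in Theorem \ref{thm: QE of homog model structures} (where the homotopy-fibre argument is not available because $\IUR$ and levelwise equivalences interact less directly). One small remark: your argument for preservation of $D_n$--equivalences applies to arbitrary maps, not only fibrations, so you have in fact reproved the paper's stronger conclusion that $i^*$ preserves all weak equivalences of the $n$--homogeneous model structure.
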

\begin{proof}
The right adjoint preserves fibrations by Proposition \ref{lem: forget QA on n-poly}, since the fibrations of the $n$--polynomial and $n$--homogeneous model structures agree. Let $f \colon E \to F$ be a map in the $n$--homogeneous model structure on $\CE_0$. If $f$ is a weak equivalence, then, the map $D_nf \colon D_nE \longrightarrow D_nF$, is a levelwise weak equivalence. The map $i^*D_nf$ fits into a commutative diagram of the form
\[
\begin{tikzcd}[row sep=scriptsize, column sep=scriptsize]
                                         & D_n(i^*E) \arrow[rr] \arrow[dd] &                                                   & T_n(i^*E) \arrow[rr] \arrow[dd] &                                             & T_{n-1}(i^*E) \arrow[dd] \\
i^*D_nE \arrow[dd] \arrow[crossing over]{rr} \arrow[ru] &                                 & i^*T_nE \arrow[crossing over]{rr} \arrow[dd] \arrow[ru, "\cong"] &                                 & i^* T_{n-1}E \arrow[dd] \arrow[ru, "\cong"] &                          \\
                                         & D_n(i^*F) \arrow[rr]            &                                                   & T_n(i^*F) \arrow[rr]            &                                             & T_{n-1}(i^*F)            \\
i^*D_nF \arrow[rr] \arrow[ru]            &                                 & i^*T_nF \arrow[rr] \arrow[ru, "\cong"]            &                                 & i^* T_{n-1}F \arrow[ru, "\cong"]            &                         
\end{tikzcd}
\]
where the indicated maps are natural isomorphisms by Lemma \ref{lem: forget and poly approx}. It follows that the maps $i^*D_nE \to D_n(i^*E)$, and $i^*D_nF \to D_n(i^*F)$ are natural isomorphisms, and hence
\[
D_n(i^*f) \colon D_n(i^*E) \to D_n(i^*F),
\] 
is a levelwise weak equivalence, and hence, the right adjoint preserves weak equivalences. 
\end{proof}

\subsection{Change of universe}

The strong relationship between polynomial functors and the change of universe functors extends to a strong relationship between homogeneous functors and the change of universe functors, as homogeneous functors are defined via polynomial properties.

\begin{lem}\label{lem: homog and change of universe U to R}
If $F \in \E_0^\mathbf{U}$ is $n$--homogeneous, then $\IUR F$ is $n$--homogeneous in $\RE_0$.
\end{lem}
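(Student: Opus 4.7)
The plan is to verify the two defining conditions of $n$--homogeneity for $\IUR F$ separately, using results already established for polynomial functors and the polynomial approximation functor under the change of universe.

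First, I would observe that $F$ being $n$--homogeneous means $F$ is $n$--polynomial and $T_{n-1}^\mathbf{U} F$ is levelwise weakly contractible. So the task splits into showing $\IUR F$ is $n$--polynomial and that $T_{n-1}^\mathbf{R}(\IUR F)$ is levelwise weakly contractible.

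The first condition is immediate from Lemma \ref{lem: poly and change of universe U to R}, which states that $\IUR$ preserves $n$--polynomial functors (one direction was proved there, though Proposition \ref{thm: poly and change of universe iff}(1) would also suffice). For the second condition, I would invoke Proposition \ref{thm: I_U^R preserves poly approx}, giving a natural isomorphism
\[
T_{n-1}^\mathbf{R}(\IUR F) \cong \IUR(T_{n-1}^\mathbf{U} F).
\]
Since $F$ is $n$--homogeneous, $T_{n-1}^\mathbf{U} F$ is levelwise weakly contractible, and since $\IUR$ is defined by precomposition with the inclusion $j \colon \CJ_0 \hookrightarrow \UJ_0$, it preserves levelwise weak equivalences. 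Hence $\IUR(T_{n-1}^\mathbf{U} F)$, and therefore $T_{n-1}^\mathbf{R}(\IUR F)$, is levelwise weakly contractible. Combining the two gives the result.

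There is no real obstacle here: the lemma is essentially a formal consequence of the earlier comparison results for polynomial functors and for the polynomial approximation functor under the change of universe, packaged together. The only subtle point worth noting in the write-up is making sure to flag that $\IUR$ preserves levelwise weak equivalences (so that trivial approximation on one side forces trivial approximation on the other), which follows trivially from its description as precomposition.
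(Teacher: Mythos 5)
Your proposal matches the paper's proof essentially verbatim: both reduce $n$--homogeneity of $\IUR F$ to polynomiality (via Lemma~\ref{lem: poly and change of universe U to R} / Proposition~\ref{thm: poly and change of universe iff}) plus triviality of the $(n-1)$--polynomial approximation, the latter following from the natural isomorphism $T_{n-1}^{\bR}(\IUR F) \cong \IUR(T_{n-1}^{\bU}F)$ of Proposition~\ref{thm: I_U^R preserves poly approx} and the fact that $\IUR$ preserves levelwise weak equivalences. No discrepancies.
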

\begin{proof}
The change of universe functors preserve polynomiality (see Proposition \ref{thm: poly and change of universe iff}) hence it suffices to show that the $(n-1)$--polynomial approximation of $\IUR F$ is trivial. Indeed, by Proposition \ref{thm: I_U^R preserves poly approx}, $T_{n-1}^\bR (\IUR F)$ is naturally isomorphic to $\IUR (T_{n-1}^\bU F)$, and the result follows since $T_{n-1}^\bU F$ is trivial, and change of universe preserves levelwise weak equivalences. 
\end{proof}

\begin{lem}\label{lem: homog and change of universe R to U}
If a functor $F \in \RE_0$ is $n$--homogeneous, then $\IRU  F$ is $n$--homogeneous in $\E_0^\mathbf{U}$.
\end{lem}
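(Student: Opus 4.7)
The plan is to mirror the proof of Lemma \ref{lem: homog and change of universe U to R}, replacing the change of universe $\IUR$ with $\IRU$ throughout, and invoking the analogous results established earlier in Section \ref{section: polynomials} for $\IRU$. Recall that $n$--homogeneous means $n$--polynomial together with trivial $(n-1)$--polynomial approximation, so there are two properties to verify for $\IRU F$.

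First I would verify $n$--polynomiality of $\IRU F$. This is immediate from Proposition \ref{thm: poly and change of universe iff}(2), which states that $F \in \RE_0$ is $n$--polynomial if and only if $\IRU F$ is $n$--polynomial; since $F$ is $n$--homogeneous, it is in particular $n$--polynomial, giving the required conclusion.

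Next I would show that $T_{n-1}^{\bU}(\IRU F)$ is trivial. Proposition \ref{thm: change of universe and poly approx} supplies a natural isomorphism $T_{n-1}^{\bU}(\IRU F) \cong \IRU(T_{n-1}^{\bR} F)$. Since $F$ is $n$--homogeneous in $\RE_0$, its $(n-1)$--polynomial approximation $T_{n-1}^{\bR} F$ is levelwise weakly contractible. The change of universe functor $\IRU$ preserves levelwise weak equivalences (it is both left and right Quillen with respect to the projective model structures by Proposition \ref{prop: QA for R and U}), so $\IRU(T_{n-1}^{\bR} F)$ is levelwise weakly contractible, and hence so is $T_{n-1}^{\bU}(\IRU F)$.

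Neither step is really an obstacle, since all the heavy lifting has been done in Section \ref{section: polynomials}: the compatibility of $\IRU$ with $\tau_n$ (Lemma \ref{lem: I_R^U and tau_n}) and with the polynomial approximation $T_n$ (Proposition \ref{thm: change of universe and poly approx}) together with the characterisation of polynomial functors under change of universe (Proposition \ref{thm: poly and change of universe iff}) reduce the homogeneous statement to a formal consequence. The proof is essentially a two--line verification using these facts.
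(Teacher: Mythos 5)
Your proof is correct and takes essentially the same approach as the paper: the paper's own proof simply says the triviality of $T_{n-1}^{\bU}(\IRU F)$ "follows a similar argument as above," referring to Lemma \ref{lem: homog and change of universe U to R}, and you have filled in exactly those details using Proposition \ref{thm: poly and change of universe iff}(2) and Proposition \ref{thm: change of universe and poly approx}. Your explicit verification that $\IRU$ preserves levelwise weak equivalences (via the two Quillen adjunctions) is a correct justification, though one could also see it directly from the formula $\IRU E(V) \cong E(\C \otimes \R^n)$ for $\dim_\C V = n$.
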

\begin{proof}
It suffices to show that the $(n-1)$--polynomial approximation of $\IRU F$ is trivial, with follows a similar argument as above. 
\end{proof}

We can hence relate the model structures, this time achieving Quillen equivalences by utilising the Quillen equivalence between the n-homogeneous model structures and the intermediate categories. 

\begin{thm}\label{thm: QE of homog model structures}
The adjoint pairs
\[
\xymatrix@C+2cm{
n\homog\RE_0 \ar@<3ex>[r]|{\IRU } \ar@<-3ex>[r]|{\IRU }  & n\homog\E_0^\mathbf{U} \ar[l]|{\IUR },
}
\]
are Quillen equivalences between the $n$--homogeneous model structures.
\end{thm}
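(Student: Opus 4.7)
My plan is to factor the proof into two steps: first establish that both adjoint pairs are Quillen adjunctions between the $n$-homogeneous model structures, then upgrade to Quillen equivalences by invoking the fact that the underlying categorical adjunctions are already equivalences.

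For the Quillen adjunction step, I would use the explicit characterisation of the $n$-homogeneous model structure. By Proposition \ref{lem: acyclic fibrations in n-homog} the fibrations of $n\homog$ coincide with those of $n\poly$, and the acyclic fibrations coincide with the fibrations of $(n-1)\poly$, while by Lemma \ref{lem: cofibrations of n-homog} the cofibrations are precisely the projective cofibrations that are $(n-1)$-polynomial equivalences. Proposition \ref{lem: QA for change of universe and poly} already establishes that $\IUR$ is simultaneously left and right Quillen between the $n$- and $(n-1)$-polynomial model structures, and the analogous statements for $\IRU$ follow symmetrically since $(\IRU,\IUR)$ is a categorical equivalence. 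Combining these with preservation of projective cofibrations, one obtains that both $\IUR$ and $\IRU$ preserve fibrations, acyclic fibrations, and cofibrations of the $n$-homogeneous model structure.

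The remaining technical point, which I expect to be the main obstacle, is preservation of the weak equivalences, i.e.\ of $D_n$-equivalences (needed, for instance, to pass from cofibrations to acyclic cofibrations). Here I would invoke the characterisation in Proposition \ref{prop: we in n--homog}: a map $f$ is a $D_n$-equivalence if and only if $\ind_0^n T_n f$ is a levelwise weak equivalence. Since $\IUR$ commutes with $T_n$ by Proposition \ref{thm: I_U^R preserves poly approx} and with $\ind_0^n$ by Corollary \ref{cor: derivatives commute with change of universe}, and since $\IUR$ preserves levelwise weak equivalences, it follows that $\IUR$ preserves $D_n$-equivalences. The same conclusion for $\IRU$ follows using Proposition \ref{thm: change of universe and poly approx} and Corollary \ref{cor: derivatives and change of universe}. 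This completes the verification that both adjoint pairs are Quillen adjunctions.

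To upgrade to Quillen equivalences, I would exploit the fact that on the underlying categories, $(\IRU,\IUR)$ and $(\IUR,\IRU)$ are genuine adjoint equivalences with units and counits that are natural isomorphisms. Thus for any cofibrant $F \in n\homog \RE_0$ and any fibrant $G \in n\homog \E_0^\mathbf{U}$, the relevant unit and counit maps are already isomorphisms, so a fortiori weak equivalences, with no derived replacement required. This immediately yields that both Quillen adjunctions are Quillen equivalences.
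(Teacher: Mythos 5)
Your proof is correct, and it genuinely differs from the paper's in the Quillen equivalence step. The paper verifies the Hovey criterion by showing explicitly that $\IUR$ \emph{reflects} $D_n$-equivalences: it reduces via Proposition \ref{prop: we in n--homog} to the map $\ind_0^n T_n^\bU f$ in the intermediate category, invokes reflection of $n\pi_*$-isomorphisms by the change of universe on $\U(n)\E_n$ (Proposition \ref{lem: change of universe QE on intermediate}), and then upgrades an $n\pi_*$-isomorphism between $n\Omega$-spectra to a levelwise equivalence. You bypass this entirely by establishing that \emph{both} $\IUR$ and $\IRU$ preserve $D_n$-equivalences (using the same $\ind_0^n T_n$ characterisation symmetrically) and then observing that, since the unit and counit are genuine natural isomorphisms, the derived (co)units are weak equivalences, giving a Quillen equivalence directly. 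This is cleaner and avoids the trip through the intermediate categories, essentially because reflection by $\IUR$ is equivalent to preservation by $\IRU$ once the unit/counit are isomorphisms. One phrasing to tighten: ``with no derived replacement required'' is not quite right. The derived unit at cofibrant $F$ is the composite $F \xrightarrow{\eta_F} \IUR\IRU F \to \IUR(\widehat{\IRU F})$ and the fibrant replacement \emph{does} appear; what saves you is precisely that $\IUR$ preserves weak equivalences, so that $\IUR$ of the replacement map $\IRU F \to \widehat{\IRU F}$ is a weak equivalence. You established this preservation earlier, so the argument is sound, but the reason the replacement is harmless should be stated rather than elided. A smaller point: the argument that $\IRU$ preserves levelwise weak equivalences (needed in your $\ind_0^n T_n$ argument) deserves a word --- it follows because $\IUR$ both preserves and, by the cofinality of $j\colon \CJ_0 \to \UJ_0$ (every object of $\UJ_0$ is isometrically isomorphic to one in $\CJ_0$), reflects levelwise weak equivalences, and $\IUR\IRU \cong \id$.
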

\begin{proof}
We have to show that $\IUR $ is both a left and right Quillen functor as part of a Quillen equivalence. Since $D_n$--equivalences are defined levelwise, $\IUR $ preserves $D_n$--equivalences. The fibrations of the $n$--homogeneous model structure are precisely the fibrations of the $n$--polynomial model structure, which are, by Proposition \ref{lem: QA for change of universe and poly}, preserved by $\IUR$. Hence $\IUR $ is a right Quillen functor. 

Let $f \colon E \to F$ be a cofibration in $n\homog\E_0^\bU$. Lemma \ref{lem: cofibrations of n-homog} characterises $f$ as a projective cofibration and an $(n-1)$--polynomial equivalence, both of which are preserved by $\IUR $. Hence $\IUR $ is a left Quillen functor.

To exhibit that these adjoint pairs are Quillen equivalences we must show that $\IUR $ reflects weak equivalences and both the unit of the $(\IRU, \IUR)$ adjunction, and counit of the $(\IUR, \IRU)$ adjunction, are $n$--homogeneous equivalences. This latter follows since the (co)unit is an isomorphism on the underlying categories, hence, in particular,  a levelwise weak equivalence. For the former, let $f \colon E \to F$ be a map in $n\homog\E_0^\mathbf{U}$, such that, 
\[
D_n^\bR(\IUR f) \colon D_n^\bR(\IUR E) \longrightarrow D_n^\bR(\IUR F),
\]
is a levelwise weak equivalence. Proposition \ref{prop: we in n--homog} gives that the map 
\[
\ind_0^n T_n^\bR(\IUR f) \colon \ind_0^nT_n^\bR(\IUR E) \longrightarrow \ind_0^n T_n^\bR(\IUR F),
\] 
is a levelwise weak equivalence. Combining Corollary \ref{cor: derivatives commute with change of universe} and Proposition \ref{thm: I_U^R preserves poly approx} yields two natural isomorphisms, $\ind_0^nT_n^\bR(\IUR E) \to \IUR (\ind_0^nT_n^\bU E)$, and, $\ind_0^nT_n^\bR (\IUR F) \to \IUR (\ind_0^nT_n^\bU F)$, hence the above map is equivalent to the map
\[
\IUR (\ind_0^n T_n^\bU f) \colon \IUR (\ind_0^nT_n^\bU E) \longrightarrow \IUR (\ind_0^n T_n^\bU F). 
\]
The change of universe functor reflects $n\pi_*$--isomorphisms (see Proposition \ref{lem: change of universe QE on intermediate}), hence the map
\[
\ind_0^n T_n^\bU f \colon \ind_0^nT_n^\bU E \longrightarrow \ind_0^n T_n^\bU F. 
\]
is an $n\pi_*$--isomorphism. The result follows upon noting that the above map is an $n\pi_*$--isomorphism between $n\Omega$--spectra (see \cite[Lemma 5.7]{Ta19}), hence a levelwise weak equivalence, and hence $f: E \to F$ is an $n$--homogeneous equivalence. 
\end{proof}

Using the fact that the pair $(\IRU , \IUR )$ is an adjoint equivalence of categories we can show that homogeneous functors in unitary calculus are completely characterised by homogeneous functors in the intermediate input category $\RE_0$. 

\begin{prop}\label{thm: change of universe and homog functors}\hfill
\begin{enumerate}
\item A functor $F \in \E_0^\bU$ is $n$--homogeneous if and only if $\IUR  F \in \RE_0$ is $n$--homogeneous.
\item A functor $F \in \RE_0$ is $n$--homogeneous if and only if $\IRU  F \in \E_0^\bU$ is $n$--homogeneous.
\end{enumerate}
\end{prop}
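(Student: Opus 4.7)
The plan is to mirror exactly the proof strategy already used for the polynomial analogue, Proposition \ref{thm: poly and change of universe iff}. Each statement is a biconditional, and one direction of each is already supplied: for (1), Lemma \ref{lem: homog and change of universe U to R} gives $F$ $n$--homogeneous $\Rightarrow$ $\IUR F$ $n$--homogeneous, and for (2), Lemma \ref{lem: homog and change of universe R to U} gives $F$ $n$--homogeneous $\Rightarrow$ $\IRU F$ $n$--homogeneous. So the work lies entirely in the converses.

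For the converse of (1), I would suppose $\IUR F$ is $n$--homogeneous in $\RE_0$, and then apply Lemma \ref{lem: homog and change of universe R to U} to $\IUR F$ to conclude that $\IRU(\IUR F)$ is $n$--homogeneous in $\E_0^\bU$. The pair $(\IRU, \IUR)$ is an adjoint equivalence of categories, so the counit $\IRU(\IUR F) \to F$ is a natural isomorphism, and in particular a levelwise weak equivalence, hence a $D_n$--equivalence. Being $n$--homogeneous is invariant under isomorphism in $\E_0^\bU$, so $F$ itself is $n$--homogeneous. The argument for the converse of (2) is entirely symmetric: starting from $\IRU F$ being $n$--homogeneous, apply Lemma \ref{lem: homog and change of universe U to R} to see that $\IUR(\IRU F)$ is $n$--homogeneous, then use the unit isomorphism $F \cong \IUR(\IRU F)$ coming from the adjoint equivalence to transfer this property back to $F$.

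Neither direction poses any genuine obstacle; the work has already been done in Lemmas \ref{lem: homog and change of universe U to R} and \ref{lem: homog and change of universe R to U}, which themselves reduce to Proposition \ref{thm: poly and change of universe iff} together with Proposition \ref{thm: I_U^R preserves poly approx} and Proposition \ref{thm: change of universe and poly approx} for the vanishing of the $(n-1)$--polynomial approximation. The only point requiring care is the observation that the (co)unit of the adjoint equivalence is an isomorphism on underlying categories, so it is automatically a levelwise (hence $D_n$--) equivalence, which is what makes the converse implications formal consequences of the existing lemmas. The proof can therefore be written as two short paragraphs, one for each statement, structurally identical to the proof of Proposition \ref{thm: poly and change of universe iff}.
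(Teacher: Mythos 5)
Your proof is correct and follows exactly the same strategy as the paper's: both directions reduce to Lemmas~\ref{lem: homog and change of universe U to R} and~\ref{lem: homog and change of universe R to U}, with the converse in each case obtained by running the opposite lemma and invoking the adjoint equivalence $(\IRU,\IUR)$ to transfer the property back along the (co)unit isomorphism. (Incidentally, your placement of the two lemma citations is the correct one; the published proof of this proposition has them inadvertently swapped, whereas the analogous Proposition~\ref{thm: poly and change of universe iff} cites them in the order you give.)
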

\begin{proof}
One direction of $(1)$ is given in Lemma \ref{lem: homog and change of universe R to U}. For the other direction, let $\IUR F$ be $n$--homogeneous. Then by Lemma \ref{lem: homog and change of universe U to R}, $\IRU (\IUR F)$ is $n$--homogeneous. Since $(\IRU , \IUR )$ is an adjoint equivalence of categories, this last is naturally isomorphic to $F$, and the result follows. 

One direction of $(2)$ is given in Lemma \ref{lem: homog and change of universe U to R}. In the other direction, let $\IRU  F$ be $n$--homogeneous. By Lemma \ref{lem: homog and change of universe R to U}, $\IUR \IRU F$ is $n$--homogeneous and the result follows again by the equivalence of categories given by the change of universe functors. 
\end{proof}

\subsection{Complete picture}
We can now give a complete characterisation of the relationship between $n$--homogeneous unitary functors and $n$--homogeneous functors with reality. 

\begin{thm}\label{thm: recovery of homog functors}
A functor $F \in \CE_0$ is $n$--homogeneous if and only if $\IRU(i^*F)$ is $n$--homogeneous in $\E_0^\bU$. 
\end{thm}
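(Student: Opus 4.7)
The statement is a biconditional, and the natural strategy is to factor the comparison functor $\IRU \circ i^*$ through its two constituent steps, applying a previously established biconditional at each stage. Concretely, I would chain together Lemma \ref{lem: homog and forget}, which characterises when a functor with reality is $n$--homogeneous in terms of its underlying functor in $\RE_0$, with Proposition \ref{thm: change of universe and homog functors}(2), which does the analogous job for the change of universe $\IRU \colon \RE_0 \to \E_0^\bU$.

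In more detail, the plan is as follows. First, apply Lemma \ref{lem: homog and forget} to conclude that $F \in \CE_0$ is $n$--homogeneous if and only if $i^*F \in \RE_0$ is $n$--homogeneous; this step is where the coarse (underlying) model structure on $C_2\T$ pays off, since it lets polynomiality and triviality of lower polynomial approximations be detected after forgetting the $C_2$--action. Next, apply Proposition \ref{thm: change of universe and homog functors}(2) to the functor $i^*F \in \RE_0$ to obtain that $i^*F$ is $n$--homogeneous if and only if $\IRU(i^*F)$ is $n$--homogeneous in $\E_0^\bU$; this step leverages the fact that $(\IRU, \IUR)$ is an adjoint equivalence of categories, so homogeneity is transported along it in both directions. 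Composing these two biconditionals yields the stated equivalence.

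There is no substantial obstacle here, since the theorem is essentially a bookkeeping consequence of the two previous results which in turn encode all the nontrivial content (namely, the compatibility of $i^*$ and $\IRU$ with $\tau_n$ and with the polynomial approximation functor $T_n$, as developed in Sections \ref{section: polynomials} and \ref{section: homogeneous}). If anything, the only point worth underlining in the write-up is that the chain of biconditionals is genuinely an ``if and only if'' at each stage, so no reverse implication needs to be proved separately.
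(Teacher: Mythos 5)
Your proposal matches the paper's proof exactly: both chain Lemma \ref{lem: homog and forget} with Proposition \ref{thm: change of universe and homog functors}(2) to transport $n$--homogeneity first across $i^*$ and then across $\IRU$. Nothing further is needed.
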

\begin{proof}
By Lemma \ref{lem: homog and forget}, $F$ is $n$--polynomial if and only if $i^*F \in \E_0$ is $n$--homogeneous, which by Proposition \ref{thm: change of universe and homog functors} is $n$--homogeneous if and only if $\IRU(i^*F) \in \E_0^\bU$ is $n$--homogeneous. 
\end{proof}

The end result of this section is the following diagram 
\[
\xymatrix@C+1cm@R+1cm{
n\homog\CE_0  \ar@<1ex>[d]^{1} \ar@<-1ex>[r]_{i^*}	& n\homog\RE_0 \ar@<1ex>[d]^{1} \ar@<3ex>[r]|{\IRU } \ar@<-3ex>[r]|{\IRU }	\ar@<-1ex>[l]_{(C_2)_+ \wedge (-)}	& n\homog\E_0^\mathbf{U} \ar@<1ex>[d]^{1} \ar[l]|{\IUR }	 \\
n\poly\CE_0   \ar@<1ex>[u]^{1}	\ar@<-1ex>[r]_{i^*}	&n\poly\RE_0  \ar@<1ex>[u]^{1} \ar@<3ex>[r]|{\IRU } \ar@<-3ex>[r]|{\IRU } \ar@<-1ex>[l]_{(C_2)_+ \wedge (-)}		& n\homog\E_0^\mathbf{U}  \ar@<1ex>[u]^{1} \ar[l]|{\IUR }	  \\
}
\]
of Quillen adjunctions.

\section{Recovering the Taylor tower}\label{section: Taylor towers}

In the last section we saw that if a functor with reality is $n$--homogeneous, the image of the functor in unitary calculus is also $n$--homogeneous. In this section our attention moves to directly comparing the layers of the Taylor tower, and hence the towers themselves. 

\subsection{Forgetting the $C_2$--action}
The case for the forgetful functor $i^* : \CE_0 \to \RE_0$ was contained in the proof of Proposition \ref{lem: QA on homog with forget}. We extract the relevant result here. 

\begin{lem}\label{lem: tower after forgetting}
For $F \in \CE_0$, the Taylor tower associated to $i^*F$ is naturally isomorphic to the image of the Taylor tower of $F$ under the forgetful functor $i^* \colon  \CE_0 \to \RE_0$, that is, 
\[
i^*\Tow(F) \cong \Tow(i^*F).
\]
\end{lem}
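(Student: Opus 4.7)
The plan is to assemble the natural isomorphism of polynomial approximations from Lemma \ref{lem: forget and poly approx} into a natural isomorphism of towers, then extend this to the layers. Since the Taylor tower $\Tow(F)$ is the data of the functors $T_nF$, the structure maps $r_n \colon T_nF \to T_{n-1}F$, and the layers $D_nF$ (together with the canonical maps from $F$), it suffices to exhibit coherent natural isomorphisms on each of these pieces.

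First, I would verify that the natural isomorphism $i^*(T_nF) \cong T_n(i^*F)$ of Lemma \ref{lem: forget and poly approx} is compatible with the structure maps $r_n$. These maps arise from the inclusion of posets $\mathbf{R}_{\leq n} \hookrightarrow \mathbf{R}_{\leq n+1}$ (with $\F = \C \otimes \R$) at the level of $\tau_n$, and $i^*$ commutes with the homotopy limits defining $\tau_n$, since the underlying model structure on $C_2\T$ computes homotopy limits by forgetting the $C_2$--action on the levelwise homotopy limit (as was used in the proof of Lemma \ref{lem: poly and forget}). Hence $i^* \tau_n F \cong \tau_n(i^*F)$ naturally, and this isomorphism is compatible with the map $\tau_{n-1} \to \tau_n$ induced by the poset inclusion. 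Since $i^*$ also commutes with the sequential homotopy colimits defining $T_n$ from $\tau_n$, the resulting isomorphism $i^* T_nF \cong T_n(i^*F)$ is natural in $F$ and commutes with the tower maps $r_n$.

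Second, I would transfer this to the layers. The layer $D_nF$ is the homotopy fibre of $r_n \colon T_nF \to T_{n-1}F$. The forgetful functor $i^*$ preserves homotopy fibre sequences, since homotopy fibres in $C_2\T$ with the underlying (coarse) model structure are computed by forgetting to $\T$. The same diagram chase carried out in the proof of Proposition \ref{lem: QA on homog with forget} therefore produces a natural isomorphism $i^*D_nF \cong D_n(i^*F)$ compatible with the maps $D_nF \to T_nF$. Taken together with the previous paragraph, this yields an isomorphism of the full towers, $i^*\Tow(F) \cong \Tow(i^*F)$.

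The only real subtlety is bookkeeping: one must check that the isomorphisms $i^*T_nF \cong T_n(i^*F)$ assembled from Lemma \ref{lem: forget and poly approx} are indeed natural in $n$ as well as in $F$, and that the isomorphisms at the level of layers are compatible with the connecting maps into the tower. Both of these reduce to the fact that $i^*$, being a right adjoint preserving all weak equivalences and fibrations, commutes strictly with the homotopy limits, sequential homotopy colimits, and homotopy fibres from which $T_nF$ and $D_nF$ are constructed; no genuinely new argument is required beyond what appeared in the proofs of Lemma \ref{lem: forget and poly approx} and Proposition \ref{lem: QA on homog with forget}.
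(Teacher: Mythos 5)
Your argument matches the paper's proof: both invoke Lemma \ref{lem: forget and poly approx} for the natural isomorphisms $i^*T_nF \cong T_n(i^*F)$ and then appeal to the diagram in Proposition \ref{lem: QA on homog with forget} to identify the fibre sequences defining the layers, the only difference being that you spell out the compatibility bookkeeping more explicitly than the paper does. The approach is correct and essentially identical.
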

\begin{proof}
Proposition \ref{lem: QA on homog with forget} identifies the homotopy fibre sequence 
\[
i^*D_nF \longrightarrow i^*T_nF \longrightarrow i^*T_{n-1}F,
\]
with the homotopy fibre sequence 
\[
D_n(i^*F) \longrightarrow T_n(i^*F) \longrightarrow T_{n-1}(i^*F),
\]
hence the towers are equivalent. 
\end{proof}

\subsection{Change of universe}

\begin{lem}\label{lem: tower and IUR}
For $F \in \E_0^\bU$, the Taylor tower associated to $\IUR  F$ is naturally isomorphic to the image of the Taylor tower of $F$ under the change of universe functor $\IUR $, that is
\[
\IUR(\Tow(F)) \cong \Tow(\IUR(F)).
\] 
\end{lem}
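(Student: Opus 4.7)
The plan is to build the claimed natural isomorphism tower-level by tower-level, assembling the result from the comparison of polynomial approximations already established. First I would recall that by Proposition \ref{thm: I_U^R preserves poly approx} there is, for each $n$, a natural isomorphism $\IUR T_n^\mathbf{U} F \cong T_n^\mathbf{R} \IUR F$, and that this isomorphism comes from the canonical comparison of the enriched ends defining the homotopy limits $\tau_n^\mathbf{U}$ and $\tau_n^\mathbf{R}$, induced by the cofinal inclusion $j\colon \CJ_0 \to \UJ_0$ of Example \ref{ex: cofinal R to U}.

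Next I would verify the compatibility of these natural isomorphisms with the connecting maps $r_n\colon T_n F \to T_{n-1} F$ of the Taylor tower. Because the connecting maps are induced by the inclusion of posets $\mathbf{U}_{\leq n} \hookrightarrow \mathbf{U}_{\leq n+1}$ (respectively $\mathbf{R}_{\leq n} \hookrightarrow \mathbf{R}_{\leq n+1}$), and because the comparison $\IUR \tau_n^\mathbf{U} F \cong \tau_n^\mathbf{R} \IUR F$ of Lemma \ref{lem: agreement of tau_n} is itself induced by the inclusion $j$, a diagram chase shows that the squares
\[
\xymatrix@C+1cm{
\IUR T_n^\mathbf{U} F \ar[r]^{\cong} \ar[d]_{\IUR r_n} & T_n^\mathbf{R} \IUR F \ar[d]^{r_n} \\
\IUR T_{n-1}^\mathbf{U} F \ar[r]_{\cong} & T_{n-1}^\mathbf{R} \IUR F
}
\]
commute. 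This identifies the two tower diagrams $\IUR \{T_n^\mathbf{U} F\}$ and $\{T_n^\mathbf{R} \IUR F\}$ up to natural isomorphism.

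Finally, I would address the layers $D_n$. Since $\IUR$ is defined by precomposition with $j\colon \CJ_0 \to \UJ_0$, it is a levelwise functor on the underlying spaces and therefore commutes with all levelwise constructions, in particular with the formation of homotopy fibres. Consequently the homotopy fibre sequence $D_n^\mathbf{U} F \to T_n^\mathbf{U} F \to T_{n-1}^\mathbf{U} F$ is sent by $\IUR$ to a homotopy fibre sequence naturally isomorphic to $D_n^\mathbf{R} \IUR F \to T_n^\mathbf{R} \IUR F \to T_{n-1}^\mathbf{R} \IUR F$, yielding $\IUR D_n^\mathbf{U} F \cong D_n^\mathbf{R} \IUR F$.

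The expected obstacle, and indeed the only nontrivial point, is the compatibility of the natural isomorphisms with the structure maps of the tower: one must trace through the construction of the comparison $\IUR \tau_n^\mathbf{U} F \cong \tau_n^\mathbf{R} \IUR F$ of Lemma \ref{lem: agreement of tau_n} through the sequential homotopy colimit defining $T_n$, and check that the resulting natural isomorphism is the one induced by the common inclusion of indexing categories. Once this is done, the Taylor tower comparison follows formally by assembling the $T_n$-level isomorphisms with the layerwise comparison of $D_n$.
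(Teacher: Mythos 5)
Your proposal is correct and follows the same strategy as the paper: use the $\tau_n$-level comparison of Lemma \ref{lem: agreement of tau_n} to establish compatibility of the natural isomorphism $\IUR T_n^\mathbf{U} F \cong T_n^\mathbf{R}\IUR F$ with the connecting maps $r_n$, then pass to the layers $D_n$ via homotopy fibres, which $\IUR$ preserves as a levelwise functor. The paper carries out the ``diagram chase'' you allude to explicitly by assembling a commutative cube and taking horizontal homotopy colimits, but the underlying argument is the same.
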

\begin{proof}
We have to identify the fibre sequence 
\[
D_n^\bR(\IUR F) \longrightarrow T_n^\bR(\IUR F) \longrightarrow T_{n-1}^\bR (\IUR F),
\]
with the fibre sequence
\[
\IUR(D_n^\bU F) \longrightarrow \IUR(T_n^\bU F) \longrightarrow \IUR(T_{n-1}^\bU F),
\]
obtained by applying the change of universe functor $\IUR$ to the fibre sequence giving the $n$--th layer of the unitary Taylor tower of $F$. It suffices to exhibit a commutative diagram
\[
\xymatrix@C+1cm{
T_n^\bR(\IUR F) \ar[r]^{r_n^\bR} \ar[d] & T_{n-1}^\bR(\IUR F) \ar[d] \\
\IUR(T_n^\bU F) \ar[r]_{\IUR r_n^\bU} & \IUR(T_{n-1}^\bU F)
}
\]
in which the vertical maps are the natural isomorphisms of Proposition \ref{thm: I_U^R preserves poly approx}. The inclusion $\C^n \hookrightarrow \C^{n+1}$ induces a map on homotopy limits, $(\tau_n^\bU)^k F \to (\tau_{n-1}^\bU)^k F$, which is compatible with the homotopy colimits defining the polynomial approximations, in that, the diagram
\[
\xymatrix@C+1cm{
(\tau_n^\bU)^k F \ar[r] \ar[d]^\rho & (\tau_{n-1}^\bU)^k F \ar[d]^\rho \\
(\tau_n^\bU)^{k+1} F \ar[r] & (\tau_{n-1}^\bU)^{k+1} F \\
}
\]
commutes. Consider the cube
\[
\begin{tikzcd}[row sep=scriptsize, column sep=scriptsize]
                                                        & \IUR((\tau_{n-1}^\bU)^k F) \arrow[rr] \arrow[dd] &                                                  & \IUR((\tau_n^\bU)^k F) \arrow[dd] \\
\IUR((\tau_n^\bU)^k F) \arrow[rr, crossing over] \arrow[ru] \arrow[dd] & {} \arrow[r]                                     & \IUR((\tau_n^\bU)^{k+1} F) \arrow[ru] \arrow[dd] &                                   \\
                                                        & (\tau_{n-1}^\bR)^{k}(\IUR F) \arrow[rr]          &                                                  & (\tau_{n-1}^\bR)^{k+1}(\IUR F)    \\
(\tau_n^\bR)^{k}(\IUR F) \arrow[rr] \arrow[ru]          &                                                  & (\tau_n^\bR)^{k+1}(\IUR F) \arrow[ru]            &                                  
\end{tikzcd}
\]
which commutes. Indeed, the top face is the change of universe functor $\IUR$ applied to the compatibility diagram above; the bottom face is the `with reality' version of the same compatibility diagram applied to $\IUR F$; and, the front and back faces are the commutative squares formed by the compatibility of iterations of $\tau_n$ with the natural isomorphism $\IUR(\tau_n^\bU F) \cong \tau_n^\bR(\IUR F)$, and the natural isomorphism $\IUR(\tau_{n-1}^\bU F) \cong \tau_{n-1}^\bR(\IUR F)$, respectively, see Lemma \ref{lem: agreement of tau_n}. 

It is left to show that the left and right hand side faces commute. The left hand side face can be rewritten as
\[
\xymatrix{
\underset{0 \neq U_1, \dots, U_k \subseteq \C^{n+1}}{\holim}~ F(V_\C \oplus U_1 \oplus \cdots \oplus U_k) \ar[r] \ar[d] & \underset{0 \neq U_1, \dots, U_k \subseteq \C^{n}}{\holim}~ F(V_\C \oplus U_1 \oplus \cdots \oplus U_k)  \ar[d] \\
\underset{0 \neq W_1, \dots, W_k \subseteq \C \otimes \R^{n+1}}{\holim}~ F(V_\C \oplus W_1 \oplus \cdots \oplus W_k) \ar[r] & \underset{0 \neq W_1, \dots, W_k \subseteq \C \otimes \R^{n}}{\holim}~ F(V_\C \oplus W_1 \oplus \cdots \oplus W_k)
}
\]
where the top map is induced by the inclusion $\C^n \hookrightarrow \C^{n+1}$; the bottom map is induced by the inclusion $\C \otimes \R^{n} \hookrightarrow \C \otimes \R^{n+1}$; the left and right hand maps are induced by the inclusion of posets $\{0 \neq U \subseteq \C \otimes \R^{k}\} \to \{0 \neq U \subseteq  \C^{k}\}$ for $k=n+1$, and $k=n$, respectively. It follows that the left hand face commutes since the composites 
\[
\{0 \neq U \subseteq \C \otimes \R^{n}\} \hookrightarrow \{0 \neq U \subseteq  \C \otimes \R^{n+1}\} \hookrightarrow  \{0 \neq U \subseteq  \C^{n+1}\},
\]
and
\[
\{0 \neq U \subseteq \C \otimes \R^{n}\}  \hookrightarrow \{0 \neq U \subseteq  \C^n\} \hookrightarrow  \{0 \neq U \subseteq \C^{n+1}\},
\]
agree. The right hand face of the cube commutes similarly to the left hand face by replacing $k$ by $k+1$. 

Taking homotopy colimits in the horizontal direction of the cube produces the required commutative square 
\[
\xymatrix@C+1cm{
T_n^\bR(\IUR F) \ar[r]^{r_n^\bR} \ar[d]_{\cong} & T_{n-1}^\bR(\IUR F) \ar[d]_{\cong} \\
\IUR(T_n^\bU F) \ar[r]_{\IUR r_n^\bU} & \IUR(T_{n-1}^\bU F)
}
\]
where the vertical maps are the natural isomorphisms of Proposition \ref{thm: I_U^R preserves poly approx}. 
\end{proof}

\begin{lem}\label{lem: tower and IRU}
For $F \in \RE_0$, the Taylor tower associated to $\IRU  F$ is naturally isomorphic to the image of the Taylor tower of $F$ under the change of universe functor $\IRU $, that is, 
\[
\IRU(\Tow(F)) \cong \Tow(\IRU(F)).
\]
\end{lem}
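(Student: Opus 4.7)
The plan is to deduce this from the analogous statement for $\IUR$, namely Lemma \ref{lem: tower and IUR}, by exploiting the fact (recorded in the proposition before Proposition \ref{prop: QA for R and U}) that $(\IRU, \IUR)$ is an adjoint equivalence of categories, with $\IRU \circ \IUR \cong \mathrm{id}$ and $\IUR \circ \IRU \cong \mathrm{id}$. Since every step in the Taylor tower, and the tower itself, is a diagram in $\RE_0$ or $\E_0^\bU$, applying one of these equivalences of categories preserves each layer and each structure map up to natural isomorphism.

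Concretely, I would first apply Lemma \ref{lem: tower and IUR} to the unitary functor $\IRU F \in \E_0^\bU$, yielding a natural isomorphism of towers
\[
\IUR(\Tow(\IRU F)) \cong \Tow(\IUR \IRU F).
\]
Using the natural isomorphism $\IUR \IRU F \cong F$ coming from the adjoint equivalence, the right-hand tower is naturally isomorphic to $\Tow(F)$. Next, apply $\IRU$ to both sides; since $\IRU$ preserves natural isomorphisms, we obtain
\[
\IRU \IUR(\Tow(\IRU F)) \cong \IRU(\Tow(F)).
\]
Now invoke $\IRU \IUR \cong \mathrm{id}$ once more on the left-hand side to conclude that $\Tow(\IRU F) \cong \IRU(\Tow(F))$, as required.

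As a sanity check that this identification respects the layer structure, the commutative square comparing the $n$-th and $(n-1)$-st polynomial approximations,
\[
\xymatrix@C+1cm{
T_n^\bU(\IRU F) \ar[r]^{r_n^\bU} \ar[d]_{\cong} & T_{n-1}^\bU(\IRU F) \ar[d]_{\cong} \\
\IRU(T_n^\bR F) \ar[r]_{\IRU r_n^\bR} & \IRU(T_{n-1}^\bR F)
}
\]
with vertical maps the natural isomorphisms from Proposition \ref{thm: change of universe and poly approx}, is obtained by applying $\IRU$ to the corresponding square for $\IUR \IRU F$ produced in the proof of Lemma \ref{lem: tower and IUR}, then conjugating by the unit isomorphism $\IUR \IRU F \cong F$. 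Taking homotopy fibres horizontally then identifies $D_n^\bU(\IRU F)$ with $\IRU(D_n^\bR F)$, so the whole Taylor tower is transported naturally.

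I do not expect a genuine obstacle here: the only subtlety is bookkeeping the naturality of the equivalence $(\IRU, \IUR)$ through the iterated homotopy (co)limits defining $\tau_n$ and $T_n$, and this is already handled implicitly by Lemma \ref{lem: I_R^U and tau_n} and Proposition \ref{thm: change of universe and poly approx}. The alternative of running the cube argument of Lemma \ref{lem: tower and IUR} \emph{ab initio} for $\IRU$ is available, but it would be noticeably more cumbersome because, unlike $\IUR$, the functor $\IRU$ is not defined by precomposition, so the commutativity of the left- and right-hand faces of the cube would have to be argued by hand rather than by an inclusion of indexing posets; hence the adjoint-equivalence route is preferable.
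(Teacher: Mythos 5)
Your argument is correct and matches the paper's own proof: both deduce the statement by applying Lemma \ref{lem: tower and IUR} to the unitary functor $\IRU F$, then applying $\IRU$ and cancelling via the adjoint equivalence $(\IRU, \IUR)$ to produce the commutative square identifying $T_n^\bU(\IRU F) \to T_{n-1}^\bU(\IRU F)$ with $\IRU(T_n^\bR F) \to \IRU(T_{n-1}^\bR F)$, from which the identification of the $D_n$-layers and hence the towers follows. Your sanity-check square is exactly the paper's final diagram (with the vertical isomorphisms inverted), so the two proofs are the same up to the order in which the cancellations $\IUR\IRU \cong \mathrm{id}$ and $\IRU\IUR \cong \mathrm{id}$ are invoked.
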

\begin{proof}
By Lemma \ref{lem: tower and IUR}, there is a commutative diagram 
\[
\xymatrix{
T_n^\bR(\IUR\IRU F) \ar[r] \ar[d] & T_{n-1}^\bR(\IUR \IRU F) \ar[d] \\
\IUR(T_n^\bU \IRU F) \ar[r] & \IUR(T_{n-1}^\bU \IRU F)
}
\]
for $\IRU F \in \E_0^\bU$. Applying the change of universe functor induces a commutative diagram
\[
\xymatrix{
\IRU(T_n^\bR(\IUR\IRU F)) \ar[r] \ar[d] & \IRU(T_{n-1}^\bR(\IUR \IRU F)) \ar[d] \\
\IRU(\IUR(T_n^\bU \IRU F)) \ar[r] & \IRU(\IUR(T_{n-1}^\bU \IRU F)).
}
\]
Since $(\IRU, \IUR)$ is an equivalence of categories, we obtain a commutative diagram 
\[
\xymatrix{
\IRU(T_n^\bR F) \ar[r] \ar[d]_\cong & \IRU(T_{n-1}^\bR F) \ar[d]_\cong \\
T_n^\bU (\IRU F) \ar[r] & T_{n-1}^\bU (\IRU F)
}
\]
in which the vertical maps are the natural isomorphisms of Proposition \ref{thm: change of universe and poly approx}. It follows that the induced map on horizontal homotopy fibres is a natural isomorphism. 
\end{proof}

\subsection{Complete picture}
The following result gives the complete comparison between the Taylor towers in calculus with reality and the Taylor towers in unitary calculus. The proof of which is the combination of Lemma \ref{lem: tower after forgetting} and Lemma \ref{lem: tower and IRU}. 

\begin{thm}\label{thm: recover tower}
For $F \in \CE_0$, the Taylor tower associated to $\IRU i^*F$ is naturally isomorphic to the image of the Taylor tower of $F$ under the composition of the forgetful functor $i^* \colon \CE_0 \to \RE_0$ with the change of universe functor $\IRU  \colon \RE_0 \to \E_0^\mathbf{U}$, that is, there is a levelwise weak equivalence 
\[
\IRU(i^*(\Tow(F))) \simeq \Tow(\IRU(i^*F)).
\]
\end{thm}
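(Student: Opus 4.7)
The plan is to obtain the stated natural isomorphism by chaining together the two tower--level results already established in this section, namely Lemma \ref{lem: tower after forgetting} and Lemma \ref{lem: tower and IRU}. Both of these lemmas assemble the layerwise comparisons of the polynomial approximations and the connecting maps $r_n \colon T_n \to T_{n-1}$ into a natural isomorphism of towers, so the work reduces to composing two such tower--level isomorphisms in the appropriate order.

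More precisely, given $F \in \CE_0$, I would first invoke Lemma \ref{lem: tower after forgetting}, which supplies a natural tower--level isomorphism $i^*\Tow(F) \cong \Tow(i^*F)$ in $\RE_0$. Since $\IRU$ is a functor, applying it to this isomorphism yields a natural isomorphism
\[
\IRU\bigl(i^*\Tow(F)\bigr) \;\cong\; \IRU\bigl(\Tow(i^*F)\bigr).
\]
Next, I would apply Lemma \ref{lem: tower and IRU} with input $i^*F \in \RE_0$, giving a natural tower--level isomorphism $\IRU\bigl(\Tow(i^*F)\bigr) \cong \Tow\bigl(\IRU(i^*F)\bigr)$. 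Composing these two natural isomorphisms produces the required identification
\[
\IRU\bigl(i^*\Tow(F)\bigr) \;\cong\; \Tow\bigl(\IRU(i^*F)\bigr),
\]
which is in particular a levelwise weak equivalence.

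The only substantive thing to check is that the two isomorphisms being composed are genuinely tower--level, i.e.\ that they are compatible with the connecting maps of the Taylor tower and not merely layerwise levelwise weak equivalences of the polynomial approximations. This compatibility is precisely what is verified inside the proofs of Lemma \ref{lem: tower after forgetting} (where forgetting the $C_2$--action preserves the homotopy fibre sequences $D_nF \to T_nF \to T_{n-1}F$ by Proposition \ref{lem: QA on homog with forget}) and Lemma \ref{lem: tower and IRU} (where the cube argument exhibits a commuting square involving $r_n^\bR$ and $r_n^\bU$). Consequently the composite natural isomorphism also intertwines the connecting maps, so no further verification is required beyond stacking the two tower--level identifications. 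In this sense the main obstacle has been absorbed into the proofs of the two preceding lemmas, and the present theorem is a formal consequence of their composition.
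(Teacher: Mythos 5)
Your proof is correct and coincides with the paper's: the author explicitly states that Theorem \ref{thm: recover tower} is ``the combination of Lemma \ref{lem: tower after forgetting} and Lemma \ref{lem: tower and IRU},'' which is exactly the composition you carry out (apply Lemma \ref{lem: tower after forgetting}, push forward along the functor $\IRU$, then apply Lemma \ref{lem: tower and IRU} to $i^*F$). Your closing remark about compatibility with the connecting maps $r_n$ is a reasonable sanity check, and as you note it is already handled inside the proofs of the two cited lemmas.
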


\section{Recovering the model categories for unitary calculus}\label{section: model cats}

We have seen that the Taylor tower in unitary calculus is completely recovered by the Taylor tower in calculus with reality. To fully complete this comparison, we turn our attention to the model categories for the calculi. The $n$--polynomial and $n$--homogeneous model structures have already been considered, hence it is left to examine the relationship between the various intermediate categories (see Definition \ref{def: intermediate cat}) and the categories of spectra involved in unitary calculus and calculus with reality. 

\subsection{Forgetting the $C_2$--action}
Forgetting the $C_2$--action is an example of a change of group functor, which have been comprehensively studied for categories of $G$--spectra by Mandell and May \cite[V.1.2]{MM02}. We show how these change of group functors interact with the intermediate categories and categories of spectra for the calculi.

\subsubsection{Intermediate categories}
Let $X \in C_2 \ltimes \U(n)\E_n^\bR$, then $X$ is defined by $C_2 \ltimes \U(n)$--equivariant structure maps of the form
\[
X_{U,V} \colon \J_n^\mathbf{R}(U,V) \longrightarrow \Map_*(X(U), X(V)).
\]
Forgetting structure through $i \colon \{e\} \to C_2$, yields a $\U(n)$--equivariant map 
\[
i^*X_{U,V} \colon i^*\J_n^\mathbf{R}(U,V) \to i^*\Map_*(X(U), X(V))= \Map_*(i^*X(U), i^* X(V)).
\]
As such, $i:\{e\} \to C_2$ induces a functor $i^* \colon C_2\ltimes \U(n)\E_n^\mathbf{R} \to \U(n)\E_n^\mathbf{R}$, which has a left adjoint $(C_2)_+ \wedge (-)$, given by 
\[
((C_2)_+ \wedge X)(V) = (C_2)_+ \wedge X(V),
\]
with structure maps 
\[
S^{nW} \wedge (C_2)_+ \wedge X(V) \cong (C_2)_+ \wedge (i^*S^{nW} \wedge X(V)) \to (C_2)_+ \wedge X(V \oplus W),
\]
where the isomorphism follows from \cite[Proposition V.2.3]{MM02}.

\begin{lem}\label{lem: forget and intermediates}
The adjoint pair
\[
\adjunction{(C_2)_+ \wedge (-)}{U(n)\E_n^\mathbf{R}}{C_2 \ltimes \U(n)\E_n^\mathbf{R}}{i^*}
\]
is a Quillen adjunction between the $n$--stable model structures. 
\end{lem}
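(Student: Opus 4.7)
The plan is to verify that the right adjoint $i^{*}$ preserves fibrations and weak equivalences of the $n$-stable model structure; the left adjoint then automatically preserves cofibrations and acyclic cofibrations, giving the Quillen adjunction.

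First I would argue that $i^{*}$ preserves $n\pi_{*}$-isomorphisms. By Proposition~\ref{prop: n-stable model structure}, a weak equivalence $f \colon X \to Y$ in $C_2\ltimes \U(n)\E_n^\mathbf{R}$ is one for which the colimit
\[
n\pi_k(X) = \underset{q}{\colim}~ \pi_{k+\dim_{\R}(\C^n \otimes_\C \C\otimes \R^q)}X(\C \otimes \R^q)
\]
is sent isomorphically to $n\pi_k(Y)$. Because the coarse model structure on $C_2\T$ is used throughout, the underlying homotopy groups of a $C_2$-space agree with the homotopy groups of the underlying space, so forgetting the $C_2$-action commutes with the formation of $n\pi_k$; hence $i^{*}f$ is again an $n\pi_{*}$-isomorphism.

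Next I would show $i^{*}$ preserves fibrations. The fibrations of the $n$-stable model structure are the levelwise Serre fibrations $f \colon X \to Y$ for which
\[
\xymatrix{
X(V) \ar[r] \ar[d] & \Omega^{nW}X(V \oplus W) \ar[d] \\
Y(V) \ar[r] & \Omega^{nW}Y(V \oplus W)
}
\]
is a homotopy pullback square for all $V,W \in \CJ_n$. The forgetful functor $i^{*}$ preserves levelwise Serre fibrations by Proposition~\ref{prop: QA for forget on input} (applied levelwise), and as a right adjoint it commutes with the mapping space construction $\Omega^{nW}(-) = \Map_{*}(S^{nW}, -)$. Since homotopy pullbacks in the coarse $C_2$-model structure are detected by the underlying topological homotopy pullbacks, applying $i^{*}$ to the above diagram yields the corresponding homotopy pullback square for $i^{*}f$ in $\U(n)\E_n^\mathbf{R}$; thus $i^{*}f$ is a fibration.

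The main obstacle — really the only subtlety — is confirming that forgetting the $C_2$-action genuinely preserves the homotopy pullback condition, and this is precisely where the decision to work with the coarse (underlying) model structure on $C_2\T$ pays off: weak equivalences and fibrations in $C_2\T$ are created by $i^{*}$, so homotopy pullback squares in $C_2\T$ are exactly those squares whose underlying square is a homotopy pullback of topological spaces. With preservation of both weak equivalences and fibrations established, $i^{*}$ is a right Quillen functor, completing the proof.
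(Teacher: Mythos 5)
Your proof is correct and follows essentially the same route as the paper: both show $i^{*}$ preserves $n\pi_{*}$-isomorphisms because the coarse model structure makes the $n$-homotopy groups insensitive to the $C_2$-action, and both show $i^{*}$ preserves fibrations by checking levelwise Serre fibrations and homotopy pullback squares are detected on underlying spaces. The extra remark that $i^{*}$ commutes with $\Omega^{nW}(-)$ is a harmless elaboration of what the paper leaves implicit.
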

\begin{proof}
The $n$--homotopy groups of $X\in C_2 \ltimes \U(n)\E_n^\bR$ agree with the $n$--homotopy groups of $i^*X$, hence the right adjoint preserves $n\pi_*$--isomorphisms. Let $f \colon X \to Y$ be a fibration in $C_2 \ltimes \U(n)\E_n^\bR$, i.e. $f \colon X \to Y$ is a levelwise fibration and the square
\[
\xymatrix{
X(V) \ar[r]^{f_V} \ar[d] & Y(V) \ar[d] \\
\Omega^{nW}X(V \oplus W) \ar[r]_{\Omega^{nW}f_V} & \Omega^{nW}Y(V \oplus W)
}
\]
is a homotopy pullback for all $V,W \in \CJ_n$. Levelwise fibrations are those maps $f: X \to Y$, such that, for every $V \in \CJ_n$, $f_V : X(V) \to Y(V)$ is a Serre fibrations in $\T$,  hence $i^*$ preserves levelwise fibrations. Moreover, homotopy pullbacks are computed levelwise in $\T$, hence the right adjoint preserves homotopy pullbacks, hence the square
\[
\xymatrix@C+1cm{
(i^*X)(V) \ar[r]^{i^*f_V} \ar[d] & (i^*Y)(V) \ar[d] \\
\Omega^{nW}(i^*X)(V \oplus W) \ar[r]_{\Omega^{nW}i^*f_V} & \Omega^{nW}(i^*Y)(V \oplus W)
}
\]
is a homotopy pullback, and $i^*$ preserves fibrations in the $n$--stable model structure. 
\end{proof}

\subsubsection{Categories of spectra} Let $H$ be a subgroup of a compact Lie group, $G$. Given a spectrum (in any chosen model) with an action of $G$, we can restrict through the subgroup inclusion $i \colon H \to G$ to given the spectrum an action of $H$.

For a spectrum $\Theta$ with an action of $G$, let $i^* \Theta$ be the spectrum with an action of $H$ formed by forgetting structure through $i$. That is, let $\mathbf{F}$ be the category of $\F$--inner product subspaces of $\F^\infty$ with $\F$--linear isometric isomorphisms. For a spectrum $\Theta$ with $G$--action, the evaluations maps 
\[
\Theta_{U,V}\colon \mathbf{F}(U, V) \to \Map(\Theta(U), \Theta(V)),
\]
are $G$--equivariant. We can apply $i^*$ to this, to give a map which is $H$--equivariant by forgetting structure, 
\[
i^*\Theta_{U,V} \colon i^* \mathbf{F}(U,V) \to i^*\Map(\Theta(U), \Theta(V))= \Map(i^*\Theta(U), i^*\Theta(V)).
\]

The functor $i^*: \s[G]\to\s[H]$, has a left adjoint $G_+ \wedge_{H} - \colon \s[H] \to \s[G]$, given on an object $\Theta$ of $\s[H]$ by 
\[
(G_+\wedge_{H} \Theta)(V) = G_+ \wedge_{H} \Theta(V), 
\]
compare \cite[Proposition V.2.3]{MM02}.

\begin{prop}\label{change of group QA}
The adjoint pair
\[
\adjunction{G_+ \wedge_{H} -}{\s[H]}{\s[G]}{i^*},
\]
is a Quillen adjunction. 
\end{prop}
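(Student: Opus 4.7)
The plan is to show that $i^*\colon \s[G] \to \s[H]$ is a right Quillen functor by verifying directly that it preserves fibrations and weak equivalences between the stable model structures. The argument will be formally identical to that of Lemma \ref{lem: forget and intermediates}, since in both settings the right adjoint is a restriction of the group action and both the weak equivalences and the fibrations of the stable model structure on spectra with a group action are detected on the underlying non-equivariant object.

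First I would dispatch the weak equivalences. The stable model structure on $\s[G]$ is transferred from the stable model structure on $\s$, so a map is a weak equivalence if and only if its underlying map of orthogonal spectra is a $\pi_*$-isomorphism. Since $i^*$ does not alter the underlying spectrum---it merely restricts the $G$-action to an $H$-action along $i$---it preserves $\pi_*$-isomorphisms.

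Next I would turn to the fibrations. A map $f\colon X \to Y$ in $\s[G]$ is a fibration exactly when $f$ is a levelwise Serre fibration and the square with corners $X(V)$, $Y(V)$, $\Omega^{W}X(V\oplus W)$ and $\Omega^{W}Y(V\oplus W)$ is a homotopy pullback of based spaces for every pair $V,W$. Both conditions are levelwise conditions on the underlying based spaces (Serre fibrations and homotopy pullbacks are computed in $\T$), hence they are preserved under restriction along $i$, just as in the proof of Lemma \ref{lem: forget and intermediates}.

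The only remaining ingredient is to confirm that $G_+\wedge_H -$ is genuinely left adjoint to $i^*$. This follows by applying the classical $G$-space/$H$-space induction--restriction adjunction levelwise and checking compatibility with the orthogonal spectrum structure maps, which in turn uses the isomorphism $S^W \wedge G_+ \wedge_H \Theta(V) \cong G_+ \wedge_H (S^W \wedge \Theta(V))$ from \cite[Proposition V.2.3]{MM02}, exactly as was done for the intermediate category earlier. I anticipate no substantive obstacle: the proposition is a direct translation of Lemma \ref{lem: forget and intermediates} from the intermediate-category setting to that of orthogonal spectra with a group action.
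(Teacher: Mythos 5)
Your proof is correct and follows exactly the same route as the paper's: the paper's one-line proof simply observes that both the $\pi_*$-isomorphisms and the fibrations of the stable model structure on $\s[G]$ are defined independently of the group action, and you have spelled out that observation in full detail together with a check of the adjunction itself.
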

\begin{proof}
This follows immediately from noting that the $\pi_*$-isomorphisms and fibrations in the stable model structure on $\s[G]$ are defined independently of the group action.
\end{proof}

In particular, we achieve the following. 

\begin{cor}
The adjoint pair
\[
\adjunction{(C_2)_+ \wedge (-)}{\s^\mathbf{O}[\U(n)]}{\s^\mathbf{O}[C_2 \ltimes \U(n)]}{i^*},
\]
is a Quillen adjunction between the stable model structures. 
\end{cor}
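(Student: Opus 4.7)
The plan is to derive this corollary directly from the preceding Proposition \ref{change of group QA} by specialising to the subgroup inclusion $i\colon \U(n) \hookrightarrow C_2 \ltimes \U(n)$. First I would observe that $\U(n)$ sits as a normal subgroup of $C_2 \ltimes \U(n)$, with quotient $C_2$, so Proposition \ref{change of group QA} applied with $G = C_2 \ltimes \U(n)$ and $H = \U(n)$ already yields a Quillen adjunction
\[
\adjunction{(C_2 \ltimes \U(n))_+ \wedge_{\U(n)} (-)}{\s^\mathbf{O}[\U(n)]}{\s^\mathbf{O}[C_2 \ltimes \U(n)]}{i^*}
\]
between the stable model structures.

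Next I would identify the left adjoint with $(C_2)_+ \wedge (-)$. For $\Theta \in \s^\mathbf{O}[\U(n)]$ and each $V$, the space $(C_2 \ltimes \U(n))_+ \wedge_{\U(n)} \Theta(V)$ is the quotient of $(C_2 \ltimes \U(n))_+ \wedge \Theta(V)$ by the diagonal $\U(n)$--action, and since every element of $C_2 \ltimes \U(n)$ has a unique representative of the form $(c, e)$ with $c \in C_2$ modulo the $\U(n)$--action, this quotient is naturally homeomorphic to $(C_2)_+ \wedge \Theta(V)$. The structure maps and the ambient $C_2 \ltimes \U(n)$--action on this identification agree with those specified for the functor $(C_2)_+ \wedge (-)$ defined in the preceding discussion of the intermediate categories (cf.\ the analogous verification for $\U(n)\E_n^\bR$ in Lemma \ref{lem: forget and intermediates}, which follows the same template via \cite[Proposition V.2.3]{MM02}).

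Having identified the two left adjoints, the corollary is immediate from Proposition \ref{change of group QA}. There is no real obstacle to overcome: the weak equivalences and fibrations in the stable model structure on $\s^\mathbf{O}[G]$ are created by the forgetful functor to $\s^\mathbf{O}$, so $i^*$ automatically preserves both classes, and the identification of left adjoints is a formal consequence of the normality of $\U(n)$ in $C_2 \ltimes \U(n)$.
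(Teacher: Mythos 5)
Your proof is correct and follows exactly the route the paper intends: specialise Proposition \ref{change of group QA} to $H=\U(n)\hookrightarrow G=C_2\ltimes\U(n)$ and identify $(C_2\ltimes\U(n))_+\wedge_{\U(n)}(-)$ with $(C_2)_+\wedge(-)$ using the splitting of the semi-direct product. The paper states the corollary as an immediate consequence without spelling out the coset identification, so you have simply made explicit a step the paper leaves implicit.
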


A similar dissection of the category of $G$--objects in $\CE_1$ yields the following adjoint pair.

\begin{lem}
The adjoint pair
\[
\adjunction{(C_2)_+ \wedge (-)}{\RE_1[\U(n)]}{\CE_1[\U(n)]}{i^*},
\]
is a Quillen adjunction between the stable model structures. 
\end{lem}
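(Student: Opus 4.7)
The plan is to verify that $i^*$ is a right Quillen functor, mirroring the argument given for the intermediate categories in Lemma \ref{lem: forget and intermediates} and for spectra in Proposition \ref{change of group QA}. The category $\CE_1[\U(n)]$ consists of Real $\U(n)$-spectra indexed on $\CJ_1$, while $\RE_1[\U(n)]$ is the analogous non-$C_2$-equivariant category, and the functor $i^*$ simply forgets the $C_2$-action on the structure maps, keeping the $\U(n)$-action intact.

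First I would record that weak equivalences in the stable model structure on $\CE_1[\U(n)]$ are $\pi_*$-isomorphisms, and since the coarse model structure on $C_2\T$ is used throughout, these are detected on underlying homotopy groups. It is then immediate from the equality $\pi_*(i^*X) = \pi_*(X)$ that $i^*$ preserves $\pi_*$-isomorphisms. Next I would verify that $i^*$ preserves fibrations: a fibration $f \colon X \to Y$ is a levelwise Serre fibration such that for all $V, W$ the square
\[
\xymatrix@C+1cm{
X(V_\C) \ar[r]^{f_V} \ar[d] & Y(V_\C) \ar[d] \\
\Omega^{W_\C} X(V_\C \oplus W_\C) \ar[r]_{\Omega^{W_\C}f_V} & \Omega^{W_\C} Y(V_\C \oplus W_\C)
}
\]
is a homotopy pullback. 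Levelwise Serre fibrations are preserved because the underlying map of spaces is unchanged by $i^*$, and homotopy pullback squares in $C_2\T$ with the coarse model structure are detected on underlying spaces, so $i^*$ preserves them.

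It remains to check that $i^*$ has the claimed left adjoint $(C_2)_+ \wedge (-)$. On objects this is defined levelwise by $((C_2)_+ \wedge X)(V_\C) = (C_2)_+ \wedge X(V_\C)$, with structure maps obtained from the natural isomorphism $S^{W_\C} \wedge (C_2)_+ \wedge X(V_\C) \cong (C_2)_+ \wedge (i^* S^{W_\C} \wedge X(V_\C))$ of \cite[Proposition V.2.3]{MM02}. The adjunction on the level of mapping spaces follows levelwise from the standard $((C_2)_+ \wedge -, i^*)$ adjunction in $C_2\T$, and the $\U(n)$-equivariance is preserved throughout since $C_2$ and $\U(n)$ act independently on all constructions.

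There is no serious obstacle here; the argument is essentially bookkeeping, and the only mild subtlety is ensuring compatibility of $(C_2)_+ \wedge (-)$ with the module structure in $\CE_1[\U(n)]$, which is handled by the standard isomorphism cited above. The proof is thus entirely parallel to the intermediate-category case treated in Lemma \ref{lem: forget and intermediates}.
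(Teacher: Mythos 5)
Your argument is correct and is essentially the same as the paper's, which simply cites that ``the same arguments as in Proposition~\ref{change of group QA} show that the right adjoint preserves weak equivalences and fibrations''; you have just spelled out those arguments (preservation of $\pi_*$-isomorphisms, levelwise Serre fibrations, and homotopy pullback squares) and additionally verified the existence and form of the left adjoint.
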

\begin{proof}
The same arguments as in Proposition \ref{change of group QA} show that the right adjoint preserves weak equivalences and fibrations in the stable model structure, and hence is a right Quillen functor. 
\end{proof}

\subsection{Change of universe}
The change of universe for the intermediate categories and the categories of spectra also benefit from work of Mandell and May \cite[\S V.1, \S VI.2]{MM02}, where they construct change of universe functors for orthogonal $G$--spectra and $S$--modules. The following is the $n$--th jet category (see Definition \ref{def: jet categories}) version of \cite[V.1.1]{MM02}. The case $n=1, \mathbb{F}=\R$ is precisely the statement of \cite[V.1.1]{MM02}. 

\begin{lem}
For all $n\geq 0$, and $V,W \in \J_n$ with $\dim_\F(V) =\dim_\F(W)$, the evaluation $G(n)$--map
\[
\J_n(V,W) \wedge X(V) \longrightarrow X(W),
\]
of the functor $X\in G(n)\E_n$ induces a $G(n)$-homeomorphism 
\[
\J_n(V,W) \wedge_{\Aut(V)} X(V) \longrightarrow X(W).
\]
\end{lem}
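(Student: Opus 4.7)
The plan is to reduce everything to the statement that $X$ sends isomorphisms to isomorphisms, once the dimension hypothesis has been used to strip away the Thom space complication. My first step is to observe that when $\dim_\F V = \dim_\F W$, every $\F$-linear isometry $f : V \to W$ is an isomorphism, so $f(V)^\perp = 0$. Hence the fibres of the complement bundle $\gamma_n(V,W)$ vanish, the Thom space reduces to $\J_n(V,W) = \J(V,W)_+$, and the $G(n)$-action (which operates only on the $\F^n$-factor of the fibres) is trivial on this reduced morphism space.

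Next, I would choose any isomorphism $f_0 : V \to W$. Precomposition with $f_0$ gives an $\Aut(V)$-equivariant homeomorphism $\Aut(V) \xrightarrow{\cong} \J(V,W)$, $g \mapsto f_0 \circ g$, where $\Aut(V)$ acts on $\J(V,W)$ by precomposition. Smashing with $X(V)$ and passing to the $\Aut(V)$-orbit space then yields a chain of natural homeomorphisms
\[
\J_n(V,W) \wedge_{\Aut(V)} X(V) \,\cong\, \J(V,W)_+ \wedge_{\Aut(V)} X(V) \,\cong\, \Aut(V)_+ \wedge_{\Aut(V)} X(V) \,\cong\, X(V),
\]
where the last identification sends $[g, x] \mapsto g \cdot x$, using the $\Aut(V)$-action on $X(V)$ that comes from restricting the evaluation $\J_n(V,V) \wedge X(V) \to X(V)$ along $\Aut(V)_+ \hookrightarrow \J_n(V,V)$.

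Finally, I would trace the evaluation map through these identifications. Associativity of the $\J_n$-action on $X$ identifies $[f_0 \circ g, x]$ in the coend with $[f_0, g \cdot x]$, which the evaluation sends to $X(f_0)(g\cdot x)$; under the chain of homeomorphisms above, the composite map becomes $X(f_0) : X(V) \to X(W)$. Since $f_0^{-1} \circ f_0 = \id_V$ in $\J_n$ (using that the complement bundles $\gamma_n(V,V)$ and $\gamma_n(W,W)$ are also zero), functoriality of $X$ forces $X(f_0)$ to admit $X(f_0^{-1})$ as a two-sided inverse, so it is a homeomorphism. The $G(n)$-equivariance is automatic from the $G(n)\T$-enrichment of $X$, which makes each morphism-space evaluation, and in particular $X(f_0)$, a $G(n)$-equivariant map.

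The main hurdle is entirely book-keeping rather than conceptual: one must verify that the ``internal'' $\Aut(V)$-action used to form the coend $\wedge_{\Aut(V)}$ coincides with the ``external'' $\Aut(V)$-action on $X(V)$ inherited from restricting the evaluation map along $\Aut(V)_+ \hookrightarrow \J_n(V,V)$. This compatibility is precisely the associativity condition $\mu(\phi \circ g, x) = \mu(\phi, \mu(g, x))$ built into $X$ being a $\J_n$-enriched functor, for $\phi \in \J_n(V,W)$, $g \in \Aut(V)$ and $x \in X(V)$.
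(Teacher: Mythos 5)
Your proof is essentially the argument that the paper compresses into a citation of Mandell--May \cite[V.1.1]{MM02}: when $\dim_\F V = \dim_\F W$ the fibres of $\gamma_n(V,W)$ vanish, $\J_n(V,W)$ collapses to $\J(V,W)_+ = \bF(V,W)_+$, and this is a free $\Aut(V)$--space with orbit space a point, so the coend reduces to $X(V)$ via any chosen isomorphism $f_0$.

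There is, however, a small but real misstep in the equivariance discussion that would bite in the ``with reality'' case $G(n) = C_2\ltimes\U(n)$. You claim the $G(n)$--action on the collapsed morphism space $\J(V,W)_+$ is trivial; that is true for $G(n) = \O(n)$ or $\U(n)$, where the action lives entirely on the $\F^n$--factor of the (now vanished) fibre, but it is false for $C_2\ltimes\U(n)$, where $C_2$ acts on $\J(V_\C,W_\C)$ by $f \mapsto \overline{f(\overline{\,\cdot\,})}$, which is non-trivial. Consequently your parting claim that each $X(f_0)$ is ``automatically'' $G(n)$--equivariant by the $G(n)\T$--enrichment is not right either: the enrichment makes the \emph{total} evaluation $\J_n(V,W)\wedge X(V) \to X(W)$ a $G(n)$--map, but the restriction $X(f_0)$ to a single $f_0$ is $G(n)$--equivariant only if $f_0$ is a $G(n)$--fixed point. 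Two clean fixes: either choose $f_0$ to be a $C_2$--fixed isomorphism (which exists since $V_\C$ and $W_\C$ are complexifications of real spaces, so one can take the complexification of a real isometry), or — cleaner and what the Mandell--May argument really does — note that $G(n)$--equivariance of the induced map $\J_n(V,W)\wedge_{\Aut(V)} X(V) \to X(W)$ comes directly from $G(n)$--equivariance of the evaluation map, because the right $\Aut(V)$--action one quotients by is normalised by the $G(n)$--action; one then checks the map is a homeomorphism after forgetting equivariance, exactly as in the rest of your argument. The reduction and freeness observations are the real content, and those you have exactly as in the paper.
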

\begin{proof}
The proof follows from \cite[V.1.1]{MM02} upon noting that $\J_n(V,W) = \bF(V,W)_+$ since $\dim_\F(V) =\dim_\F(W)$ and $\bF(V,W)$ is a free right $\Aut(V)$--space generated by any chosen linear isometric isomorphism. 
\end{proof}

As a corollary, we obtain a characterisation of the domain of the $G(n)$--homeomorphism above. 

\begin{cor}
For all $X \in G(n)\E_n$, and $V,W \in  \J_n$ with $\dim_\F(V) =\dim_\F(W)$, there is a homeomorphism
\[
X(V) \longrightarrow \J_n(V,W) \wedge_{\Aut(V)} X(V),
\]
given by sending $x \in X(V)$ to the equivalence class of $(f,x)$ in $ \J_n(V,W) \wedge_{\Aut(V)} X(V)$, where $f: V \to W$ is a chosen linear isometric isomorphism generating $\bF(V,W)_+ = \J_n(V,W)$. 
\end{cor}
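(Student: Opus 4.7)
The plan is to deduce the corollary from the preceding lemma by identifying the proposed map as the structure map $X(f)$ followed by the inverse of the lemma's $G(n)$--homeomorphism. First I would observe that the map $\iota \colon X(V) \to \J_n(V,W) \wedge_{\Aut(V)} X(V)$, $x \mapsto [f,x]$, is continuous by construction, being the composition of the inclusion $x \mapsto (f,x)$ with the quotient map defining the smash product over $\Aut(V)$. Next I would verify that the post--composition $\mu \circ \iota$, where $\mu \colon \J_n(V,W) \wedge_{\Aut(V)} X(V) \xrightarrow{\sim} X(W)$ denotes the $G(n)$--homeomorphism supplied by the previous lemma, agrees with the evaluation map $X(f) \colon X(V) \to X(W)$. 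This is immediate from unwinding the definitions, since $\mu$ is induced by the evaluation $\J_n(V,W) \wedge X(V) \to X(W)$, which sends $(f, x)$ to $f \cdot x = X(f)(x)$.

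Second, I would argue that $X(f)$ is itself a homeomorphism. Because $f \in \bF(V,W)$ is a linear isometric isomorphism it has an inverse $f^{-1} \in \bF(W,V) \subset \J_n(W,V)$, and functoriality of the evaluation structure on $X$ (which extends the categorical composition in $\bF \subset \J_n$) gives $X(f^{-1}) \circ X(f) = X(f^{-1} \circ f) = X(\id_V) = \id_{X(V)}$ and symmetrically $X(f) \circ X(f^{-1}) = \id_{X(W)}$. Hence $\iota = \mu^{-1} \circ X(f)$ is a composition of homeomorphisms, and is therefore itself a homeomorphism, with the required continuous inverse sending $[g, y] \in \J_n(V,W) \wedge_{\Aut(V)} X(V)$ to $h \cdot y$, where $h \in \Aut(V)$ is the unique element with $g = f \circ h$.

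I do not anticipate any real obstacle here: the substantive content is already carried by the preceding lemma, and the present result is essentially a bookkeeping reformulation that exploits the fact that $\bF(V,W)$ is a free right $\Aut(V)$--torsor generated by the chosen isomorphism $f$. The only mild subtlety is ensuring that the evaluation structure on $X$ really does extend the composition in $\bF \subset \J_n$ functorially, but this is built into the definition of $G(n)\E_n$ as a $G(n)\T$--enriched functor category on $\J_n$.
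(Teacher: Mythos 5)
Your proof is correct, and it follows what is clearly the intended route: the corollary is stated in the paper without proof as an immediate consequence of the preceding lemma, and your identification of the map $x \mapsto [f,x]$ as $\mu^{-1}\circ X(f)$ — with $\mu$ the lemma's homeomorphism and $X(f)$ invertible because $f$ is a linear isometric isomorphism in $\bF(V,W) \subset \J_n(V,W)$ — is exactly the bookkeeping required. Your explicit description of the inverse via the free right $\Aut(V)$--torsor structure on $\bF(V,W)$ is also correct and consistent with the balanced--product relation $[f\circ h, y] = [f, X(h)(y)]$.
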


\subsubsection{Intermediate categories} 

The change of universe functors are defined analogously for the intermediate categories as they were for the input categories in Section \ref{section: input functors}. Define a functor $\IRU  : \U(n)\E_n^\mathbf{R} \to \U(n)\E_n^\bU$ by 
\[
(\IRU  X)(V) = \J_n^\bU(\C^n, V) \wedge_{\U(n)} X(\C \otimes \R^n),
\]
for $X \in \U(n)\E_n^\mathbf{R}$ and $V \in \J_n^\mathbf{U}$ with $\dim_\C(V) = n$.  The evaluation map
\[
\J_n^\mathbf{U}(V,W) \wedge (\IRU  X)(V) \to X(W),
\]
is given by the composite
\[
\begin{split}
\J_n^\mathbf{U}(V,W) \wedge (\IRU  X)(V) 	&= 							\J_n^\mathbf{U}(V,W) \wedge \J_n^\bU(\C^n, V) \wedge_{\U(n)} X(\C \otimes \R^n) \\
									&\xrightarrow{\Ev \wedge 1} 		\J_n^\mathbf{U}(\C^n, W) \wedge_{\U(n)} X(\C \otimes \R^n) \\
									&\cong						\J_n^\mathbf{U}(\C^m, W) \wedge_{\U(m)} \J_n^\mathbf{U}(\C^n, \C^m) \wedge_{\U(n)} \wedge X(\C \otimes \R^n) \\
									&\xrightarrow{1 \wedge \Ev}		\J_n^\mathbf{U}(\C^m, W) \wedge_{\U(m)} X(\C \otimes \R^m), \\
\end{split}
\]
where $\dim_\C(W)=m$, and the isomorphism is the inverse to the $\U(n)$--homeomorphism 
\[
\J_n^\mathbf{U}(\C^m, W) \wedge_{\U(m)} \J_n^\mathbf{U}(\C^n, \C^m) \longrightarrow \J_n^\mathbf{U}(\C^n, W),
\]
given by composition. We can define a change of universe in the other direction. The inclusion of categories $j \colon \J_n^\bR \to \J_n^\bU$, defines, via precomposition a functor 
\[
\IUR  \colon \U(n)\E_n^\bU \to \U(n)\E_n^\bR,
\]
given by $(\IUR  X)(V_\C) = X(j(V_\C)) = X(V_\C)$. These change of universe functors are adjoint equivalences of categories. 

\begin{lem}[{\cite[V.1.5]{MM02}}]\label{lem: equivalence of intermediate categories}
There are adjoint pairs,
\[
\xymatrix@C+2cm{
\U(n)\RE_n \ar@<3ex>[r]|{\IRU } \ar@<-3ex>[r]|{\IRU }  & \U(n)\E_n^\mathbf{U} \ar[l]|{\IUR }
}
\]
which are adjoint equivalences of categories. 
\end{lem}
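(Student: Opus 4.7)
The plan is to promote the equivalence of indexing categories to an equivalence of enriched functor categories, following the template of the input-category version in Section \ref{section: input functors} (and ultimately \cite[V.1.5]{MM02}), with the added ingredient of the corollary just proved that $\J_n(V,W) \wedge_{\Aut(V)} X(V) \cong X(W)$ whenever $\dim_\F(V) = \dim_\F(W)$.

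First I would verify that the inclusion $j \colon \CJ_n \to \J_n^{\mathbf{U}}$ is a $\U(n)\T$--enriched equivalence of categories. It is fully faithful because for complexified real spaces $V_\C, W_\C$, the morphism spaces $\J_n^{\mathbf{R}}(V_\C, W_\C)$ and $\J_n^{\mathbf{U}}(V_\C, W_\C)$ are both the Thom spaces of the same complement bundle: the base consists of $\C$--linear isometries $V_\C \to W_\C$ in either case, and the fibre $\C^n \otimes_\C f(V_\C)^\perp$ likewise does not depend on whether we view the ambient spaces as complexifications or as arbitrary complex spaces. It is essentially surjective because every finite--dimensional complex inner product subspace of $\C^\infty$ admits an orthonormal basis and hence is $\C$--linearly isometric to the complexification $\C \otimes \R^m$ of some real subspace. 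Since $j$ is an equivalence, $\IUR = j^*$ is an equivalence of $\U(n)\T$--enriched functor categories, which in particular means its left and right Kan extensions along $j$ both exist and are naturally isomorphic to a common quasi-inverse.

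Next I would identify this common quasi-inverse with $\IRU$. The left Kan extension along $j$, computed by the enriched coend
\[
(\mathrm{Lan}_j X)(V) = \int^{U_\C \in \CJ_n} \J_n^{\mathbf{U}}(j(U_\C), V) \wedge X(U_\C),
\]
may be simplified on $V$ of complex dimension $m$ by restricting the coend to the full subcategory on the single object $\C \otimes \R^m$, since every object of $\CJ_n$ of that dimension is isomorphic to $\C \otimes \R^m$. The endomorphism space $\J_n^{\mathbf{R}}(\C \otimes \R^m, \C \otimes \R^m)$ contains $\U(m) = \Aut(\C \otimes \R^m)$, and the coend over this full subcategory collapses to the quotient formula $\J_n^{\mathbf{U}}(\C^m, V) \wedge_{\U(m)} X(\C \otimes \R^m)$, matching the defining formula for $\IRU$. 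The dual argument for the right Kan extension, using an end in place of a coend, produces the same functor up to natural isomorphism because $j$ is an equivalence, which yields the second adjunction $(\IUR, \IRU)$.

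Finally I would verify that the unit and counit of each adjunction are natural isomorphisms. For the counit $\IRU \IUR X \to X$ with $X \in \U(n)\E_n^{\mathbf{U}}$, evaluating at $V$ of dimension $m$ gives
\[
\J_n^{\mathbf{U}}(\C^m, V) \wedge_{\U(m)} X(\C \otimes \R^m) \longrightarrow X(V),
\]
which is the evaluation map; this is a $\U(n)$--homeomorphism by the corollary $\J_n(V',W) \wedge_{\Aut(V')} X(V') \cong X(W)$ applied to $V' = \C \otimes \R^m$ and $W = V$, since $\C \otimes \R^m$ and $V$ have the same complex dimension. The unit $X \to \IUR \IRU X$ for $X \in \U(n)\RE_n$ is handled by the same calculation after applying $\IUR$, using that morphism spaces in $\J_n^{\mathbf{R}}$ and $\J_n^{\mathbf{U}}$ agree on complexified real spaces. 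The main obstacle is bookkeeping: one must verify that all identifications are compatible with the $\U(n)$--actions, the jet category structure maps, and the coend quotient relations; but once one commits to reading the coend as an iterated identification along the action of $\U(m) = \Aut(\C \otimes \R^m)$ and invokes the preceding lemma and corollary, each identification reduces to a routine chase.
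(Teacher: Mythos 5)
Your overall argument is correct and is essentially the one the paper delegates to \cite[V.1.5]{MM02}: show that $j \colon \CJ_n \to \J_n^{\mathbf{U}}$ is a $\U(n)\T$--enriched equivalence, so that $\IUR = j^*$ is an equivalence of enriched functor categories whose quasi-inverse (both the left and the right Kan extension along $j$) supplies the two adjoint pairs, and then identify that quasi-inverse with the explicit $\IRU$ using the unit and counit isomorphisms provided by the lemma and corollary immediately preceding the statement. The verification of the (co)unit via $\J_n(V',W)\wedge_{\Aut(V')}X(V')\cong X(W)$ is precisely the mechanism Mandell--May use.

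One step as written is misleading and would not survive scrutiny. You claim the coend computing $\mathrm{Lan}_j X$ at $V$ of complex dimension $m$ ``may be simplified by restricting the coend to the full subcategory on the single object $\C \otimes \R^m$.'' That one-object full subcategory is \emph{not} cofinal in $\CJ_n$: for instance, $\J_n^{\mathbf{R}}(\C\otimes\R^{m+1},\C\otimes\R^m)$ is a point while $\J_n^{\mathbf{R}}(\C\otimes\R^{m+1},\C\otimes\R^{m+1}) = \U(m+1)_+$ is not, so the dual of the criterion in Proposition \ref{prop: cofinality of ends} fails. The coend nevertheless does collapse to $\J_n^{\mathbf{U}}(\C^m,V)\wedge_{\U(m)}X(\C\otimes\R^m)$, but for a reason specific to this weight rather than cofinality of the subcategory: terms indexed by $U_\C$ with $\dim U_\C > m$ contribute nothing since $\J_n^{\mathbf{U}}(j(U_\C),V)$ is a point, and terms with $\dim U_\C < m$ are absorbed into the top-dimensional term by the coend relations via factoring jet morphisms through a dimension-$m$ intermediate, which is exactly what the preceding lemma packages. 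In any case you can sidestep this identification entirely: once you check (as you do) that the explicitly defined $\IRU$ is quasi-inverse to $\IUR$, its agreement with $\mathrm{Lan}_j$ and $\mathrm{Ran}_j$ follows from uniqueness of quasi-inverses, so the body of your proof is sound.
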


Moreover, the change of universe functors form Quillen equivalences between the respective $n$--stable model structures. 

\begin{prop}\label{lem: change of universe QE on intermediate}
The adjoint pairs
\[
\xymatrix@C+2cm{
\U(n)\RE_n \ar@<3ex>[r]|{\IRU } \ar@<-3ex>[r]|{\IRU }  & \U(n)\E_n^\mathbf{U} \ar[l]|{\IUR }
}
\]
are Quillen equivalences between the $n$--stable model structures.
\end{prop}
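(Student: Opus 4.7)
The plan is to exploit the fact that, by Lemma \ref{lem: equivalence of intermediate categories}, the pairs $(\IRU, \IUR)$ and $(\IUR, \IRU)$ are already adjoint equivalences of the underlying categories, so the unit and counit of each adjunction are natural isomorphisms. Consequently, once each pair is known to be a Quillen adjunction, the unit (respectively counit) is trivially an $n\pi_*$-isomorphism on every object, which is the strongest possible form of the Quillen equivalence condition. It therefore suffices to show that $\IUR$ is simultaneously a left and a right Quillen functor between the $n$-stable model structures; the conclusions for $\IRU$ follow formally by passing to the (inverse) adjoint.

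To verify that $\IUR$ is right Quillen, I would first check that it preserves $n\pi_*$-isomorphisms. Since $\IUR$ is precomposition with the inclusion $j \colon \J_n^\mathbf{R} \to \J_n^\mathbf{U}$, we have $(\IUR X)(\C \otimes \R^q) = X(\C \otimes \R^q)$, so the $n$-homotopy groups of $\IUR X$ are computed by $\colim_q \pi_{k+2nq} X(\C \otimes \R^q)$, while those of $X$ are $\colim_q \pi_{k+2nq} X(\C^q)$. These agree naturally in $X$ by cofinality of the sequence of inclusions $\C \otimes \R^q \hookrightarrow \C \otimes \R^{q+1}$ inside the sequence $\C^q \hookrightarrow \C^{q+1}$, which is the colimit-level counterpart of Example \ref{ex: cofinal R to U}. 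Second, fibrations in the $n$-stable model structure are characterised by a levelwise Serre fibration condition together with a pointwise homotopy pullback condition, both of which are preserved by any restriction-style functor. Hence $\IUR$ is right Quillen, so $(\IRU, \IUR)$ is a Quillen adjunction.

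To verify that $\IUR$ is also left Quillen, it suffices to check that it preserves projective cofibrations (as it already preserves weak equivalences by the above). The generating projective cofibrations of $\U(n)\E_n^\mathbf{U}$ are of the form $\J_n^\mathbf{U}(U,-) \wedge (\U(n))_+ \wedge i$ for $U \in \J_n^\mathbf{U}$ and $i$ a generating cofibration of $\T$. Every such $U$ is isometrically isomorphic to some $V_\C \in \CJ_n$, hence restricting along $j$ yields $\J_n^\mathbf{R}(V_\C,-) \wedge (\U(n))_+ \wedge i$, a generating projective cofibration of $\U(n)\RE_n$. Hence $\IUR$ preserves projective cofibrations, giving the second Quillen adjunction $(\IUR, \IRU)$. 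Since the units and counits are isomorphisms, both are Quillen equivalences.

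The main technical obstacle is the cofinality argument for the $n$-homotopy groups: although the idea is elementary and entirely in the spirit of Example \ref{ex: cofinal R to U}, one must check that the structure maps of $X$ in the two colimit systems correspond, so that the identification of colimits is natural in $X$ and compatible with the ambient $\U(n)$-action. A secondary subtlety is that the characterisation of cofibrations appeals to the description of generating cofibrations on each side and the fact that $\J_n^\mathbf{U}(V_\C, W_\C) = \J_n^\mathbf{R}(V_\C, W_\C)$ as $\U(n)$-spaces; once this is in hand, the remaining verifications reduce to formal manipulations with change-of-universe functors already used in Section \ref{section: input functors}.
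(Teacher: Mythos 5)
Your proof is correct and follows essentially the same route as the paper: show that $\IUR$ preserves $n\pi_*$-isomorphisms (since evaluating on the cofinal subuniverse does not change the colimit defining $n$-homotopy groups), that it preserves fibrations (as a restriction functor), and that it takes generating cofibrations of $\U(n)\E_n^{\mathbf U}$ to generating cofibrations of $\U(n)\RE_n$ via the identification $\IUR\bigl(\J_n^{\mathbf U}(V,-)\bigr)\cong \J_n^{\mathbf R}(V_\C,-)$; the upgrade from Quillen adjunction to Quillen equivalence then comes for free from the underlying adjoint equivalences of categories supplied by Lemma \ref{lem: equivalence of intermediate categories}. The only cosmetic difference is that you verify right Quillen-ness directly from the explicit description of fibrations in Proposition \ref{prop: n-stable model structure}, whereas the paper phrases it via preservation of $n\Omega$-spectra; these amount to the same thing.
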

\begin{proof}
We show that the change of universe functor $\IUR$ is both left and right Quillen, as part of two Quillen equivalences. Firstly note that for $X \in \U(n)\E_n^\bU$, the $n$--homotopy groups of $X$ and of $\IUR X$ are equal, hence $\IUR$ preserves and reflects weak equivalences. The functor $\IUR$ further preserves $n\Omega$--spectra since for a $n\Omega$--spectrum $X$,
\[
\IUR X(V_\C) = X(V_\C) \simeq \Omega^{nW_\C} X(V_\C \oplus W_\C) = \Omega^{nW} (\IUR X)(V_\C \oplus W_\C). 
\]
It follows that $\IUR$ is a right Quillen functor, and to show that $\IUR$ is a right Quillen functor as part of a Quillen equivalence it suffices to show that the derived unit of the adjunction is an isomorphism, which follows from the fact $(\IRU, \IUR)$ is an adjoint equivalence of categories.

To see that $\IUR$ is a left Quillen functor it suffices, by \cite[2.1.20]{Ho99}, to show that $\IUR$ preserves the generating cofibrations. For $\U(n)\E_n^\bU$, the generating cofibrations are of the form 
\[
\J_n^\bU(V,-) \wedge \U(n)_+ \wedge S^{n-1}_+ \longrightarrow \J_n^\bU(V,-) \wedge \U(n)_+ \wedge D^n_+,
\]
for $V \in \J_n^\bU$, and $n \geq 0$. Applying $\IUR$ and observing that $\IUR$ commutes with smash products of based spaces (see \cite[Theorem V.1.5(iv)]{MM02}), we obtain a map 
\[
\IUR(\J_n^\bU(V,-)) \wedge \U(n)_+ \wedge S^{n-1}_+ \longrightarrow \IUR(\J_n^\bU(V,-)) \wedge \U(n)_+ \wedge D^n_+,
\]
for $V \in \J_n^\bU$, and $n \geq 0$. The space $\IUR(\J_n^\bU(V,-))(W_\C) = \J_n^\bU(V,W_\C)$ is $\U(n)$--equivariantly homeomorphic to $\J_n^\bR(U_\C, W_\C)$ for $U \in \J_n^\mathbf{O}$ with $\dim_\R(U) = \dim_\C(V)$. It follows that for $V \in \J_n^\bU$, and $n \geq 0$, the above map is equivalent (up to $\U(n)$--equivariant homeomorphism) to the map 
\[
\J_n^\bR(U_\C,-) \wedge \U(n)_+ \wedge S^{n-1}_+ \longrightarrow \J_n^\bU(U_\C,-) \wedge \U(n)_+ \wedge D^n_+,
\]
which is a cofibration in $\U(n)\E_n^\bR$. Hence $\IUR$ is a left Quillen functor. To show that the adjoint pair with $\IUR$ left Quillen is a Quillen equivalence, it suffices to show that the derived counit is a $n$--stable equivalence, which follows from $(\IUR, \IRU)$ being an adjoint equivalence of categories. 
\end{proof}

\begin{rem}
In \cite[V.1]{MM02}, Mandell and May do not exhibit their change of universe functor $\I_{\mathcal{V}'}^{\mathcal{V}}$, given by the inclusion of $G$--universes $\mathcal{V} \subset \mathcal{V}'$, as a left Quillen functor. Indeed, $\I_{\mathcal{V}'}^\mathcal{V}$ need not preserve levelwise weak equivalences on fixed--points, nor $\pi_*^H$--isomorphisms for $H \leq G$. 
\end{rem}

There is another point of view on  Proposition \ref{lem: change of universe QE on intermediate}. Given an inclusion of universe $\mathcal{V} \subset \mathcal{V}'$ we can (as in \cite[Theorem V.1.7]{MM02}) define the $\mathcal{V}$--model structure on $\U(n)\E_n^\bU$. For the $\mathcal{V}$--level model structure, define weak equivalences and fibrations by restricting attention to inner product spaces in $\mathcal{V}$; i.e., the $\mathcal{V}$--levelwise weak equivalences and fibrations are created by the forgetful functor $I_{\mathcal{V}'}^\mathcal{V}$. To obtain the $\mathcal{V}$--stable model structure, we let the $\mathcal{V}$--stable equivalences and $\mathcal{V}$--fibrations be created by the forgetful functor $I_{\mathcal{V}'}^\mathcal{V}$. For the case $\mathcal{V}' = \C^\infty$, $\mathcal{V}= \C \otimes \R^\infty$, and the inclusion $\mathcal{V} \subset \mathcal{V}'$ is the inclusion $j \colon \C \otimes \R^\infty \to \C^\infty$, the $\V$--stable model structure on $\U(n)\E_n^\mathbf{R}$ is equivalent to the $n$--stable model structure on $\U(n)\E_n^\bR$, since the calculi on $\E_0^\bR$ has in essence been created through this forgetful functor, and \cite[V.1.7]{MM02}, gives the above result.

\subsubsection{Categories of spectra}
The comparisons between the categories of spectra work similarly to the comparisons of the intermediate categories. This is due to the fact that the intermediate categories are categories of $n\mathbb{S}$--modules and spectra is the special case of $n=1$, i.e., categories of spectra are $\mathbb{S}$--modules in some appropriate category.

\begin{prop}\label{lem: equiv of RE and SU}
The adjoint pairs
\[
\xymatrix@C+2cm{
\RE_1 \ar@<3ex>[r]|{\IRU} \ar@<-3ex>[r]|{\IRU }  & \s^\mathbf{U}\ar[l]|{\IUR},
}
\]
are equivalences of categories, and Quillen equivalences between the stable model structures.
\end{prop}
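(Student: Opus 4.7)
The plan is to mimic the proof of Proposition \ref{lem: change of universe QE on intermediate}, specialised to $n=1$ and stripped of $\U(n)$-equivariance, since $\RE_1$ and $\s^\bU$ are essentially the $n=1$ intermediate categories on the universes $\C\otimes\R^\infty$ and $\C^\infty$ respectively.

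First, I would establish the underlying equivalence of categories. The functor $\IUR$ is defined by restriction along the strong symmetric monoidal inclusion of universes $j\colon\CJ_1\to\J_1^\bU$, while $\IRU$ is defined by the Kan-extension-type formula $(\IRU E)(V)=\J_1^\bU(\C^n,V)\wedge_{\U(n)}E(\C\otimes\R^n)$ for $\dim_\C V=n$, exactly as in the higher intermediate cases. The argument of \cite[V.1.5]{MM02}, which amounts to noting that every $V\in\J_1^\bU$ is linearly isometrically isomorphic to some object of $\CJ_1$, then yields that both $(\IRU,\IUR)$ and $(\IUR,\IRU)$ are adjoint equivalences of categories.

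For the Quillen equivalence I would verify that $\IUR$ is both left and right Quillen as part of a Quillen equivalence, following the template of Proposition \ref{lem: change of universe QE on intermediate}. The stable homotopy groups of $X\in\s^\bU$ are a sequential colimit of the form $\colim_q\pi_{k+2q}X(\C^q)$, and cofinality of $\CJ\hookrightarrow\J^\bU$ (Example \ref{ex: cofinal R to U}) identifies these with the stable homotopy groups of $\IUR X$; hence $\IUR$ preserves and reflects $\pi_*$-isomorphisms. The same cofinality shows that $\IUR$ preserves $\Omega$-spectra, since the adjoint structure map of $\IUR X$ at $(V_\C, W_\C)$ is simply that of $X$ at those objects, and $\IUR$ plainly preserves levelwise fibrations, so $\IUR$ is right Quillen. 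To see $\IUR$ is also left Quillen it suffices, by \cite[2.1.20]{Ho99}, to check that it sends generating cofibrations $\J_1^\bU(V,-)\wedge i$ of $\s^\bU$ to cofibrations in $\RE_1$; this reduces to observing that $\IUR(\J_1^\bU(V,-))(W_\C)=\J_1^\bU(V,W_\C)$ is homeomorphic to $\J_1^\bR(U_\C,W_\C)$ for any $U_\C\in\CJ_1$ with $\dim_\R U=\dim_\C V$, exactly as in the proof of Proposition \ref{lem: change of universe QE on intermediate}. Finally, the derived unit of $(\IRU,\IUR)$ and derived counit of $(\IUR,\IRU)$ are genuine isomorphisms since the underlying functors form an adjoint equivalence of categories, completing the Quillen equivalence argument.

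The main subtlety is ensuring that the cofinality of $\CJ\hookrightarrow\J^\bU$ can be applied in the form needed for both the $\pi_*$-isomorphism comparison and the preservation of $\Omega$-spectra, but this is already in hand via Proposition \ref{prop: cofinality of ends} and Example \ref{ex: cofinal R to U}; the rest of the argument is a routine transcription of the higher intermediate case.
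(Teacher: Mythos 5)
Your proof follows the paper's approach exactly: the paper simply points to Lemma \ref{lem: equivalence of intermediate categories} for the equivalence of categories and Proposition \ref{lem: change of universe QE on intermediate} for the Quillen equivalence, and you have transcribed that latter argument for $n=1$ without the $\U(n)$-action. One small imprecision worth correcting: Example \ref{ex: cofinal R to U} is a cofinality statement for \emph{enriched ends}, and it is not what identifies the homotopy groups of $X$ and $\IUR X$ or makes $\IUR$ preserve $\Omega$-spectra — both facts hold for the more elementary reason (which the paper uses, and which you also touch on) that $\IUR$ is precomposition with $j$, the sequential colimit defining $\pi_*$ runs over the very same spaces $\C^q \cong j(\C\otimes\R^q)$ on both sides, and the adjoint structure map of $\IUR X$ at $(V_\C,W_\C)$ is literally that of $X$ at those objects; no cofinality input is needed for either comparison.
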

\begin{proof}
The equivalence of categories follows similar to Lemma \ref{lem: equivalence of intermediate categories}, and the Quillen equivalences follows similarly to Proposition \ref{lem: change of universe QE on intermediate}.
\end{proof}

Specialising to $\U(n)$--objects in each category, we obtain the following result, as a corollary of Proposition \ref{lem: equiv of RE and SU}, together with the fact that the model structures in the categories of $\U(n)$--objects are transferred from the model structures on the underlying categories. 

\begin{cor}
The adjoint pairs
\[
\xymatrix@C+2cm{
\RE_1[\U(n)] \ar@<3ex>[r]|{\IRU } \ar@<-3ex>[r]|{\IRU }  & \s^\mathbf{U}[\U(n)] \ar[l]|{\IUR },
}
\]
are equivalences of categories, and Quillen equivalences between the stable model structures.
\end{cor}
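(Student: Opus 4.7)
The plan is to deduce the corollary directly from Proposition \ref{lem: equiv of RE and SU} by promoting the change of universe functors $\IUR$ and $\IRU$ from the underlying categories $\RE_1$ and $\s^\bU$ to their categories of $\U(n)$--objects, and then transferring the homotopical content. First I would observe that $\IUR$, being defined by precomposition with the inclusion of categories $j\colon \CJ_1 \to \J_1^\bU$, is functorial in the target, so it preserves any internal group action; in particular, it restricts to a functor $\IUR\colon \s^\bU[\U(n)] \to \RE_1[\U(n)]$. Likewise, the formula $(\IRU X)(V) = \J_1^\bU(\C, V) \wedge_{\U(1)} X(\C)$ defining $\IRU$ is natural in $X$, so an additional $\U(n)$--action on $X$ (commuting with the $\U(1)$--internal action) passes through the coend, giving $\IRU\colon \RE_1[\U(n)] \to \s^\bU[\U(n)]$. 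Both adjunctions of Proposition \ref{lem: equiv of RE and SU} then lift to adjunctions on the $\U(n)$--object categories, with unit and counit given by the same natural transformations, which are already isomorphisms; this yields the claimed equivalence of categories.

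For the Quillen equivalence statement, I would invoke the standard transfer principle: the stable model structure on $\cC[\U(n)]$ (for $\cC$ equal to $\RE_1$ or $\s^\bU$) is right--induced along the forgetful functor $U\colon \cC[\U(n)] \to \cC$, so a map in $\cC[\U(n)]$ is a weak equivalence (respectively a fibration) if and only if its underlying map in $\cC$ is one. Since Proposition \ref{lem: equiv of RE and SU} shows that $\IUR$ on the underlying categories both preserves and reflects stable weak equivalences and stable fibrations (it is simultaneously left and right Quillen as part of a Quillen equivalence), the same holds after passage to $\U(n)$--objects. Hence $\IUR\colon \s^\bU[\U(n)] \to \RE_1[\U(n)]$ is both a left and right Quillen functor.

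It remains to check that both Quillen pairs are Quillen equivalences, which follows from the fact that the unit and counit of the $(\IRU,\IUR)$ and $(\IUR,\IRU)$ adjunctions on $\U(n)$--objects are natural isomorphisms (inherited from the underlying adjoint equivalence), hence in particular derived weak equivalences. The only mild subtlety, and thus the main point to verify carefully, is the compatibility of the $\U(n)$--action with the internal $\U(1)$--action used in the coend description of $\IRU$, so as to ensure the two actions do not interfere when forming $\J_1^\bU(\C,V) \wedge_{\U(1)} X(\C)$; once that is spelled out, the rest is purely formal and follows from the transfer of the model structure.
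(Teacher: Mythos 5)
Your proposal is correct and follows essentially the same route as the paper's (very terse) proof: lift $\IUR$ and $\IRU$ to $\U(n)$--objects, observe that they still constitute inverse equivalences of categories with the same unit and counit, and then use that the stable model structure on $\cC[\U(n)]$ is transferred along the forgetful functor so that weak equivalences and fibrations are detected on underlying objects, from which the Quillen equivalence statements follow formally. One small inaccuracy worth correcting: the coend formula you wrote for $\IRU$ on $\RE_1$ should read $(\IRU X)(V) = \J_1^\bU(\C^m, V) \wedge_{\U(m)} X(\C\otimes\R^m)$ where $m = \dim_\C(V)$, not $\J_1^\bU(\C,V)\wedge_{\U(1)}X(\C)$; the internal group being quotiented out is $\Aut(\C^m)=\U(m)$, varying with $V$, while the extra $\U(n)$--action (from being a $\U(n)$--object) is a separate external action that indeed commutes with it and passes through the coend as you describe.
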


\subsection{Complete picture}

The end result is the following diagram (Figure \ref{fig: model cats for reality and UC}) relating the model categories for calculus with reality, and the model categories for unitary calculus. The remainder of this section is concerned with describing how this diagram commutes. We consider each sub--diagram individually. By Lemma \ref{lem: derivatives and forget}, the square labelled $(3)$ commutes, and by Lemma \ref{lem: restriction and change of universe} and Corollary \ref{cor: derivatives commute with change of universe}, the square labelled $(6)$ commutes. Note that all the vertical Quillen adjunctions, and change of universe adjunctions in Figure \ref{fig: model cats for reality and UC} are Quillen equivalences. 

\begin{figure}[htbp]
\[
\xymatrix@C+1cm@R+1cm{
\os[C_2 \ltimes \U(n)] \ar@{}[dr]|{(1)}  \ar@<-1.5ex>[r]_{i^*} \ar@<-1.5ex>[d]_{L_\psi}   & \os[\U(n)] \ar@{}[dr]|{(4)}  \ar@<1.5ex>[d]^{r^*}  \ar@<-1.5ex>[l]_{(C_2)_+ \wedge (-)}   & \os[\U(n)]  \ar@<1.5ex>[d]^{r^*}  \ar@{=}[l]  \\
\CE_1[\U(n)] \ar@{}[dr]|{(2)}   \ar@<-1.5ex>[r]_{i^*}  \ar@<-1.5ex>[u]_{\psi} \ar@<1.5ex>[d]^{(\xi_n)^*}     & \RE_1[\U(n)] \ar@{}[dr]|{(5)}   \ar@<1.5ex>[u]^{r_!} \ar@<-1.5ex>[l]_{(C_2)_+ \wedge (-)}  \ar@<1.5ex>[d]^{(\overline{\alpha_n})^*}   \ar@<-3ex>[r]|{\IRU }  \ar@<3ex>[r]|{\IRU } & \us[\U(n)]  \ar@<1.5ex>[u]^{r_!}  \ar@<1.5ex>[d]^{(\alpha_n)^*}   \ar[l]|{\IUR } \\
C_2 \ltimes \U(n)\RE_n \ar@{}[dr]|{(3)}   \ar@<-1.5ex>[r]_{i^*}  \ar@<1.5ex>[u]^{(\xi_n)_!}  \ar@<-1.5ex>[d]_{\res_0^n/\U(n)}  & \U(n)\RE_n \ar@{}[dr]|{(6)}   \ar@<-1.5ex>[l]_{(C_2)_+ \wedge (-)}  \ar@<1.5ex>[u]^{(\overline{\alpha_n})_!}  \ar@<-1.5ex>[d]_{\res_0^n/\U(n)}   \ar@<-3ex>[r]|{\IRU }  \ar@<3ex>[r]|{\IRU } & \U(n)\E_n^\mathbf{U}    \ar@<1.5ex>[u]^{(\alpha_n)_!}  \ar@<-1.5ex>[d]_{\res_0^n/\U(n)}   \ar[l]|{\IUR } \\
n\homog\CE_0   \ar@<-1.5ex>[r]_{i^*} \ar@<-1.5ex>[u]_{\ind_0^n\varepsilon^*}  & n\homog\RE_0  \ar@<-1.5ex>[l]_{(C_2)_+ \wedge (-)}  \ar@<-1.5ex>[u]_{\ind_0^n\varepsilon^*}   \ar@<-3ex>[r]|{\IRU }  \ar@<3ex>[r]|{\IRU } & n\homog\E_0^\mathbf{U}  \ar@<-1.5ex>[u]_{\ind_0^n\varepsilon^*}   \ar[l]|{\IUR } \\
}
\]
\caption{Model categories for calculus with reality and unitary calculus}
\label{fig: model cats for reality and UC}
\end{figure}

Realification of complex vector spaces defines a functor $r \colon \J_1^\bR \to \J_1^\mathbf{O}$ given on objects by $r(\C^k) = \R^{2k}$, and on morphisms by $r(f,x) = (f_\R, rx)$ where $f_\R$ is the map $f$ viewed as an $\R$--linear map, and $rx$ is the image of $x$ under the canonical map $ \C \otimes_\C f(V) \to \R \otimes_\R (f_\R)(V_\R)$.  Precomposition with $r$ defines a functor $r^* : \us[\U(n)] \longrightarrow \RE_1[\U(n)].$ For $\Theta \in \us[\U(n)]$, the structure maps of $r^*\Theta$ are given by iterating the structure maps of $\Theta$
\[
S^{2} \wedge (r^*\Theta)(\C^k) \xrightarrow{=} S^{2} \wedge \Theta(\R^{2k}) \xrightarrow{\sigma^2} \Theta(\R^{2k+2}) \xrightarrow{=} (r^*\Theta)(\C^{k+1}),
\]
where $\sigma \colon S^1 \wedge \Theta(\R^k) \longrightarrow \Theta(\R^{k+1})$ is the structure map of $\Theta$. As $r^*$ is defined by precomposition it has a left adjoint, $r_!$ given by the left Kan extension along $r$. As was the case in \cite[Corollary 6.5]{Ta19}, this adjoint pair is a Quillen equivalence.

\begin{prop}
The adjoint pair
\[
\adjunction{r_!}{\RE_1[\U(n)]}{\us[\U(n)]}{r^*},
\]
is a Quillen equivalence. 
\end{prop}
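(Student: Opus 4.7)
The plan is to adapt the strategy of \cite[Corollary 6.5]{Ta19}, which establishes the analogous Quillen equivalence between $\us[\U(n)]$ and $\os[\U(n)]$ via realification at the jet level $n=0$. The formal adjunction $r_! \dashv r^*$ is standard enriched category theory: define $r_!$ as the $\T$-enriched left Kan extension along $r \colon \J_1^\bR \to \J_1^\bO$, automatically left adjoint to precomposition $r^*$. The adjunction descends to $\U(n)$-objects functorially, since both functors commute with the $\U(n)$-action on values.

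To check the Quillen adjunction, I would show that $r^*$ is right Quillen. The model structures on $\RE_1[\U(n)]$ and $\us[\U(n)]$ are transferred from their underlying non-equivariant stable model structures, so it suffices to treat the non-equivariant case. Since $r^*$ is precomposition with a continuous functor, it preserves levelwise weak equivalences and levelwise fibrations; the additional homotopy pullback condition defining fibrations in the $1$-stable model structure transfers under the identification $(r^*\Theta)(\C^k) = \Theta(\R^{2k})$, using that the structure maps of $r^*\Theta$ are iterated squares $\sigma^2$ of the structure maps of $\Theta$.

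The crux is the Quillen equivalence, for which I would exploit a cofinality computation on $1$-homotopy groups. By construction,
\[
1\pi_k(r^*\Theta) = \colim_q \pi_{k+2q}\,\Theta(\R^{2q}),
\]
and the subsequence of even-dimensional real inner product subspaces $\{\R^{2q}\}_q$ is cofinal in the indexing system $\{\R^p\}_p$ computing the ordinary stable homotopy groups of $\Theta$. Hence $1\pi_k(r^*\Theta) \cong \pi_k(\Theta)$, so $r^*$ both preserves and reflects stable equivalences. Coupled with a parallel cofinality argument showing that the derived counit $\L r_!(r^* \Theta) \to \Theta$ is a $\pi_*$-isomorphism when $\Theta$ is an $\Omega$-spectrum, this upgrades the Quillen adjunction to a Quillen equivalence.

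The principal obstacle will be verifying that the iterated structure maps of $r^*\Theta$ assemble into a genuine enriched functor $\CJ_1 \to \T$ that is compatible with the Thom space morphisms $\J_1^\bR(U_\C, V_\C)$, and then pushing the derived counit computation through on cofibrant $\Omega$-spectra; once these two technical points are handled, the $\U(n)$-equivariance introduces no further difficulty, because weak equivalences and fibrations in the $\U(n)$-object categories are detected by the forgetful functor to the underlying non-equivariant categories.
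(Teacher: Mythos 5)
The paper offers no proof of this proposition; it simply appeals to the analogous result \cite[Corollary~6.5]{Ta19}, so your task was to reconstruct the argument, and your strategy (show $r^*$ is right Quillen, then exploit cofinality of $\{\R^{2q}\}$ in $\{\R^p\}$ to see that $r^*$ preserves and reflects $1\pi_*$-isomorphisms) is exactly the expected one. The cofinality computation $1\pi_k(r^*\Theta) \cong \pi_k(\Theta)$ is the heart of the matter and you have it right. Two smaller remarks: the statement's $\us[\U(n)]$ should be $\os[\U(n)]$, since $r\colon \J_1^\bR \to \J_1^\bO$ has orthogonal target and precomposition lands in functors on $\J_1^\bO$; your own formula, which evaluates $\Theta$ on $\R^{2q}$, already treats $\Theta$ as an orthogonal spectrum, so your computation is consistent with the corrected statement. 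And the ``principal obstacle'' you flag is not one: $r$ is a $\T$-enriched functor on the jet categories by construction, so $r^*$ is automatically an enriched functor.

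There is, however, a genuine gap in the wrap-up. You establish that $r^*$ reflects weak equivalences and then propose to finish by showing the \emph{derived counit} $\L r_!(r^*\Theta) \to \Theta$ is a $\pi_*$-isomorphism. That combination does not match either of the standard Quillen-equivalence criteria: ``right adjoint reflects weak equivalences between fibrant objects'' pairs with ``derived \emph{unit} is a weak equivalence on cofibrant objects,'' while ``derived counit on fibrant objects'' pairs with ``\emph{left} adjoint reflects weak equivalences between cofibrant objects.'' Moreover, the derived counit requires computing $\L r_!$ as a coend, which is considerably harder than the unit. The correct and cleaner finish is: since $r^*$ preserves all weak equivalences, the derived unit on a cofibrant $X$ is, up to weak equivalence, the underived unit $X \to r^* r_! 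X$; check this is a $1\pi_*$-isomorphism on the generating cofibrant objects (free objects $\J_1^\bR(U_\C,-)\wedge \U(n)_+ \wedge K$), extend by cell induction, and then conclude by the criterion pairing unit with $r^*$ reflecting weak equivalences. With this replacement the argument goes through.
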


\begin{lem}
The diagram 
labelled $(1)$ in Figure \ref{fig: model cats for reality and UC} commutes up to stable equivalence.
\end{lem}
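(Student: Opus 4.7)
The plan is to invoke the convention from the Notation and Conventions section: a diagram of adjoint pairs commutes when the sub-diagram of right adjoints commutes up to natural (here, stable) equivalence. The right adjoints in diagram $(1)$ are $\psi$, the two copies of $i^*$, and $r^*$, and the required commutativity reduces to exhibiting a natural stable equivalence between $r^* \circ i^*_{\mathrm{top}} \circ \psi$ and $i^*_{\mathrm{bot}}$ as functors $\CE_1[\U(n)] \to \RE_1[\U(n)]$.

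First I would unwind the definitions. For $X \in \CE_1[\U(n)]$ and $V_\C \in \J_1^\mathbf{R}$, the composite $r^*\,i^*_{\mathrm{top}}\,\psi(X)(V_\C)$ equals $\psi(X)(r(V_\C)) = \Map_*(S^{i \cdot r(V_\C)}, X(r(V_\C)_\C))$. After forgetting the $C_2$-action, the identifications $r(V_\C) \cong V_\C$ as underlying real inner product spaces and $r(V_\C)_\C \cong V_\C \oplus V_\C$ as complex inner product spaces give a natural isomorphism with $\Omega^{V_\C} X(V_\C \oplus V_\C)$. The adjoint structure map for the $\mathbb{S}$-module $X$ supplies the natural comparison $\tilde{\sigma}_{V_\C} \colon X(V_\C) \to \Omega^{V_\C} X(V_\C \oplus V_\C)$, and these assemble into a natural transformation $\eta \colon i^*_{\mathrm{bot}} \Rightarrow r^* \circ i^*_{\mathrm{top}} \circ \psi$ of functors $\CE_1[\U(n)] \to \RE_1[\U(n)]$.

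Next I would verify that $\eta$ is a stable equivalence. On fibrant objects, which are precisely the $n\Omega$-spectra, the adjoint structure map is a levelwise weak equivalence by definition, so $\eta_X$ is a levelwise, hence stable, equivalence whenever $X$ is fibrant. Both composites are built from right Quillen functors (Lemma \ref{lem: forget and intermediates}, Proposition \ref{lem: change of universe QE on intermediate}, and the Quillen pair $(L_\psi, \psi)$), and so preserve fibrant objects and weak equivalences between them; a standard argument via fibrant replacement then upgrades $\eta$ to a stable equivalence on arbitrary $X$.

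The main obstacle I anticipate is bookkeeping around the identifications above, in particular ensuring that the decomposition $r(V_\C)_\C \cong V_\C \oplus V_\C$ is natural in $V_\C$, compatible with the $\U(n)$-equivariance, and compatible with the internal spectrum structure on $X$, so that $\eta$ agrees with the adjoint structure map rather than some twisted variant. Once these identifications are nailed down, the weak equivalence statement reduces cleanly to the defining property of $n\Omega$-spectra in $\CE_1[\U(n)]$, and the commutativity of the corresponding left-adjoint sub-diagram then follows formally from the Notation and Conventions.
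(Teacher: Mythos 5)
Your reduction to comparing $r^* \circ i^* \circ \psi$ with $i^*$ as functors $\CE_1[\U(n)] \to \RE_1[\U(n)]$ is correct, but your route differs substantially from the paper's, and the paper's is considerably more economical. The paper simply unwinds the definitions and computes stable homotopy groups: $\pi_k(r^*(i^*(\psi(X))))$ is a colimit over $q$ of $\pi_{k+2q}\Map_*(S^{i\R^{2q}}, (i^*X)(\C \otimes \R^{2q}))$, which after absorbing the loop degree becomes $\colim_q \pi_{k+4q}(i^*X)(\C \otimes \R^{2q})$, and since the even-indexed terms are cofinal in the colimit system defining $\pi_k(i^*X)$ the two are identified. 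No map of spectra is ever exhibited; the conclusion is read off from a cofinal reindexing.

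You instead propose to build an explicit natural transformation $\eta_X \colon i^*X \to r^*i^*\psi(X)$ from the adjoint structure map $X(V_\C) \to \Omega^{V_\C}X(V_\C \oplus V_\C)$. This is plausible in spirit, but the assertion that these levelwise maps ``assemble into a natural transformation of functors $\CE_1[\U(n)] \to \RE_1[\U(n)]$'' is precisely where the work lives, and you leave it unverified. Because the loop degree $V_\C$ grows with the level, this is a diagonal shift rather than a fixed shift by one coordinate: one must check that the identifications $ir(V_\C) \cong V_\C$ and $r(V_\C)_\C \cong V_\C \oplus V_\C$ can be chosen simultaneously natural in $V_\C$, $\U(n)$-equivariant, and compatible with the two spectrum structures. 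In particular, commuting with structure maps forces a permutation of direct summands $V_\C \oplus W_\C \oplus V_\C \oplus W_\C$ versus $V_\C \oplus V_\C \oplus W_\C \oplus W_\C$, whose coherence is not free. You flag this obstacle explicitly, but it is the heart of the matter rather than side bookkeeping. The closing fibrant-replacement step also needs care: to propagate the equivalence from fibrant $X$ to arbitrary $X$ you need $r^*i^*\psi$ to preserve \emph{all} stable equivalences, not merely weak equivalences between fibrant objects; that is true, but establishing it is essentially the paper's homotopy-group computation, so your argument ends up presupposing the shorter proof it aims to replace.
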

\begin{proof}
Let $X \in \CE_1[\U(n)]$. We show that $r^*(i^*(\psi(X)))$ is stably equivalent to $(i^*X)$. Indeed, 
\[
\begin{split}
\pi_k( r^*(i^*(\psi(X)))) 	&= 		\underset{q}{\colim}~ \pi_{k +2nq}  (r^*(i^*(\psi(X)))(\C \otimes \R^q)) \\
					&= 		\underset{q}{\colim}~ \pi_{k +2nq}  (i^*(\psi(X))(\R^{2q})) \\
					&= 		\underset{q}{\colim}~ \pi_{k +2nq}  \Map_*(S^{i\R^{2q}}, (i^*X)(\C \otimes \R^{2q})) \\
					&\cong 	\underset{q}{\colim}~ \pi_{k +4nq}  (i^*X)(\C \otimes \R^{2q})) \\
					&\cong	\underset{q}{\colim}~ \pi_{k +2nq}  (i^*X)(\C \otimes \R^{q})) \\
					&=		\pi_k(i^*X).\\ \qedhere 
\end{split}
\]
\end{proof}

There is a topological functor $\overline{\alpha_n} : \CJ_n \to \CJ_1$, which is completely analogous to the functor $\alpha_n : \J_n^\bU \to \J_1^\bU$ of Proposition \ref{QE for UC}, and should be thought of as $\alpha_n$ restricted to only take inputs of the form $V_\C$ for $V \in \J_0^\mathbf{O}$. Precomposition with $\overline{\alpha_n}$ defines the right adjoint of a Quillen equivalence between the intermediate category $\U(n)\RE_n$, and the category $\RE_1[\U(n)]$, of $\U(n)$--objects in $\RE_1$. 

\begin{lem}
The diagram 
labelled $(2)$ in Figure \ref{fig: model cats for reality and UC} commutes up to natural isomorphism. 
\end{lem}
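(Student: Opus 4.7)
The plan is to verify the commutativity directly by unwinding the definitions of the precomposition functors $(\xi_n)^*$ and $(\overline{\alpha_n})^*$. The central observation is that the $C_2\T$-enriched functor $\xi_n \colon \CJ_n \to \CJ_1$ and the $\T$-enriched functor $\overline{\alpha_n} \colon \CJ_n \to \CJ_1$ agree on objects (both send $V_\C$ to $\C^n \otimes_\C V_\C$) and on the underlying spaces of morphisms, with $\overline{\alpha_n}$ being obtained from $\xi_n$ by forgetting the $C_2$-action on the morphism Thom spaces. In other words, there is an identity $i^* \xi_n = \overline{\alpha_n}$ of $\T$-enriched functors, where $i^*$ denotes the change of enrichment induced by $\{e\} \hookrightarrow C_2$.

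With this in hand, I would check that for $X \in \CE_1[\U(n)]$ the two routes around the square produce the same $\U(n)\RE_n$-object. On the one hand,
\[
i^*((\xi_n)^* X)(V_\C) = i^*\bigl(X(\xi_n V_\C)\bigr) = i^*\bigl(X(\C^n \otimes_\C V_\C)\bigr),
\]
equipped with the $\U(n)$-structure maps obtained by restricting the $C_2 \ltimes \U(n)$-structure maps of $(\xi_n)^* X$. On the other,
\[
(\overline{\alpha_n})^*(i^* X)(V_\C) = (i^* X)(\overline{\alpha_n} V_\C) = i^*\bigl(X(\C^n \otimes_\C V_\C)\bigr),
\]
with $\U(n)$-structure maps given by precomposition of the structure maps of $i^*X$ with $(\overline{\alpha_n})_{V_\C, W_\C}$. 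Since $(\overline{\alpha_n})_{V_\C, W_\C}$ is by construction $i^*(\xi_n)_{V_\C, W_\C}$, these two sets of structure maps coincide, giving a natural isomorphism (in fact an identity on underlying data).

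Naturality in $X$ is then immediate, as any morphism $f \colon X \to Y$ in $\CE_1[\U(n)]$ is sent on both routes to the same underlying $\T$-natural transformation of $\U(n)$-diagrams, namely $f$ evaluated at objects of the form $\C^n \otimes_\C V_\C$. I do not expect a significant obstacle here; the statement is essentially a tautology once one observes that $\overline{\alpha_n}$ is the underlying $\T$-enriched functor of the $C_2\T$-enriched functor $\xi_n$, and that the forgetful functor $i^*$ commutes with precomposition along any change-of-enrichment-compatible functor. The one point worth verifying carefully is that the $\U(n)$-action which survives the forgetful functor really is the same on both sides; this holds because in both cases it arises from restricting the original $C_2 \ltimes \U(n)$-action on $X(\C^n \otimes_\C V_\C)$ along the subgroup inclusion $\U(n) \hookrightarrow C_2 \ltimes \U(n)$.
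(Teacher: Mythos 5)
Your proof is correct and follows the same approach as the paper: both arguments reduce the lemma to the observation that $\overline{\alpha_n}$ is the underlying $\T$-enriched functor of the $C_2\T$-enriched functor $\xi_n$, so that $i^*(\xi_n)^*X$ and $(\overline{\alpha_n})^*(i^*X)$ coincide on objects, and then verify that the $\U(n)$-action (which on $X(\C^n \otimes_\C V_\C)$ is the composite of the internal action $X(\sigma \otimes V)$ with the external action $\sigma_{X(nV)}$) agrees on both sides because it is the restriction along $\U(n) \hookrightarrow C_2 \ltimes \U(n)$ of the original action. The paper's proof is terser but makes the same two checks.
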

\begin{proof}
Let $X \in \CE_1[\U(n)]$. Then 
\[
(i^*(\xi_n)^*)(X)(V) = i^* X(nV) = (\overline{\alpha_n})^* (i^*X)(V). 
\]
The result then follows immediately. Note that the functor $i^*$ restricts the group actions in a compatible way.  The group $C_2 \ltimes \U(n)$ acts on $X(nV)$ by $X((g\sigma) \otimes V) \circ (g\sigma)_{X(nV)}$, which after forgetting the $C_2$--action (i.e., by letting $C_2$ act via the identity element only), gives a $\U(n)$--action on $i^*X(nV)$ by $(i^*X)(\sigma \otimes V) \circ \sigma_{(i^*X)(nV)}$. This is precisely the action of $\U(n)$ on $(\overline{\alpha_n})^*(i^*X)$. 
\end{proof}

%

\begin{lem}
The diagram
labelled $(4)$ in Figure \ref{fig: model cats for reality and UC} commutes up to natural isomorphism.
\end{lem}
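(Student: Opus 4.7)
By the convention for commutativity of diagrams of adjoint pairs, it suffices to verify that the sub-diagram of right adjoints in $(4)$ commutes up to natural isomorphism. Those right adjoints are the downward $r^*$ on the left vertical (using the realification $r \colon \J_1^\bR \to \J_1^\bO$ defined just before the lemma), the downward $r^*$ on the right vertical (using the realification $r \colon \J_0^\bU \to \J_0^\bO$ underlying Proposition \ref{QE for UC}), the identity across the top, and $\IUR \colon \us[\U(n)] \to \RE_1[\U(n)]$ across the bottom. The plan is to exhibit, for each $X \in \os[\U(n)]$, a natural $\U(n)$-equivariant isomorphism between the two objects of $\RE_1[\U(n)]$ obtained by following the two composites around the square.

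The proof is by direct unwinding of definitions. Evaluated at $V_\C \in \J_1^\bR$, one composite gives
\[
\IUR(r^* X)(V_\C) = (r^* X)(j(V_\C)) = X(r(j(V_\C))),
\]
where $j \colon \J_1^\bR \hookrightarrow \J_1^\bU$ is the inclusion of universes, while the other composite gives $r^* X(V_\C) = X(r(V_\C))$, where this $r$ is the realification $\J_1^\bR \to \J_1^\bO$. Both functors $r \circ j$ and $r$ from $\J_1^\bR$ to $\J_1^\bO$ send $V_\C = \C \otimes V$ to its underlying real inner product space, of real dimension $2\dim_\R V$, and act on morphisms and on Thom spaces of the complement bundles $\gamma_1$ via the canonical identification of $\C$-tensored orthogonal complements with their underlying real counterparts. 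They therefore agree as functors of topological categories, and the required natural isomorphism is the identity on underlying spaces.

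Equivariance for $\U(n)$ is immediate: in both cases the action is inherited directly from the internal $\U(n)$-action on $X$, transported without modification through the identification of the two realifications. Commutativity of the corresponding sub-diagram of left adjoints then follows from the mate correspondence for adjoint pairs. The main, and essentially only, difficulty is to keep careful track of the two distinct meanings of $r^*$ that appear in the square, and to verify that the identification of the two realification functors is compatible with all additional structure (in particular, on morphism Thom spaces and $\U(n)$-actions). This verification reduces to the straightforward observation that a $\C$-linear isometry between complexified real inner product spaces and its underlying $\R$-linear isometry carry the same data when passed through either composite.
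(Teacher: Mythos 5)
Your core computation is correct and essentially the same as the paper's: the realification $r\colon\J_1^\bR\to\J_1^\bO$ is literally the composite of the inclusion $j\colon\J_1^\bR\to\J_1^\bU$ with the realification $\J_1^\bU\to\J_1^\bO$ (both send $\C\otimes V$ to its underlying real inner product space, and they agree on morphism Thom spaces), so precomposition gives $\IUR\circ r^*\cong r^*$ on the nose, and the $\U(n)$-equivariance is, as you say, inherited without modification.

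The one thing missing concerns the paper's convention for diagrams of this shape (see the Notation and Conventions subsection). The bottom edge of square $(4)$ carries \emph{two} adjunctions, $(\IRU,\IUR)$ and $(\IUR,\IRU)$, so the square is said to commute only if both resulting squares of adjoint pairs commute: one in which $\IUR$ is a right adjoint and one in which it is a left adjoint. Your argument, together with its mate, settles only the first of these. The second requires the natural isomorphism $\IRU\circ r^*\cong r^*$, which the paper obtains by postcomposing your isomorphism $\IUR\circ r^*\cong r^*$ with $\IRU$ and using that $(\IRU,\IUR)$ is an equivalence of categories. This is an immediate consequence of what you have, but it needs to be stated to match what the lemma asserts.
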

\begin{proof}
Consider the diagram 
\[
\xymatrix{
\J_1^\mathbf{O} \ar@{=}[r]  & \J_1^\mathbf{O} \\
\J_1^\bR \ar[u]^r \ar[r]_{j} & \J_1^\bU \ar[u]_r
}
\]
which commutes on objects and morphisms. Hence there is a natural isomorphism
\[
\IUR  \circ r^* = (r \circ j)^* \cong (r \circ \id)^* = r^*,
\]
and the result follows for the diagram in which $\IUR $ is a right adjoint. To see that the diagram commutes when $\IUR$ is considered as a left adjoint, it suffices to postcompose on both sides of the above natural isomorphism with $\IRU$, to yield that the right adjoints of the required diagram commute up to natural isomorphism, and hence so do the left adjoints.
\end{proof}

\begin{lem}
The diagram
labelled $(5)$ in Figure \ref{fig: model cats for reality and UC} commutes up to natural isomorphism.
\end{lem}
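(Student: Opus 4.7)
The plan is to proceed by analogy with the proof of diagram $(4)$, reducing the commutativity of $(5)$ to a strictly commutative square of underlying topological categories. First I would verify that
\[
\xymatrix{
\CJ_n \ar[r]^{\overline{\alpha_n}} \ar[d]_{j_n} & \CJ_1 \ar[d]^{j_1} \\
\J_n^\bU \ar[r]_{\alpha_n} & \J_1^\bU
}
\]
commutes on both objects and morphisms, where $j_n$ and $j_1$ are the evident inclusions. On objects, both routes send $V_\C = \C \otimes V$ to $\C^n \otimes_\C V_\C$, which since $\C^n \cong \C \otimes \R^n$ may be identified with $\C \otimes (\R^n \otimes V)$ and thus lies in $\CJ_1 \subset \J_1^\bU$. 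On the morphism Thom spaces the check is equally direct from the formula $(f,x) \mapsto (\C^n \otimes_\F f, x)$, which manifestly commutes with restriction along the inclusion of universes.

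Since all four functors in diagram $(5)$ that are considered as right adjoints in one orientation ($(\alpha_n)^*$, $(\overline{\alpha_n})^*$, and the two change of universe functors $\IUR$) are defined by precomposition with these topological functors, the commutativity above yields
\[
\IUR \circ (\alpha_n)^* = (\alpha_n \circ j_n)^* = (j_1 \circ \overline{\alpha_n})^* = (\overline{\alpha_n})^* \circ \IUR.
\]
This handles the sub-diagram in which $\IUR$ appears as a right adjoint, and passing to left adjoints immediately gives the natural isomorphism $(\alpha_n)_! \circ \IRU \cong \IRU \circ (\overline{\alpha_n})_!$ for the corresponding sub-diagram in which $\IRU$ is a left adjoint.

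For the remaining orientation, where $\IRU$ is a right adjoint and $\IUR$ is a left adjoint, I would postcompose the natural isomorphism $\IUR \circ (\alpha_n)^* \cong (\overline{\alpha_n})^* \circ \IUR$ with $\IRU$ on either side and use that $(\IRU, \IUR)$ is an adjoint equivalence of categories (Lemma \ref{lem: equivalence of intermediate categories}), exactly as in the proof of Corollary \ref{cor: derivatives commute with change of universe}; this yields $(\alpha_n)^* \circ \IRU \cong \IRU \circ (\overline{\alpha_n})^*$, and taking left adjoints completes the remaining sub-diagram. I do not expect any serious obstacle: the only mildly subtle point is confirming that the total $\U(n)$--action on $X(\C^n \otimes_\C V_\C)$ (combining the internal action via $\C^n$ and the external action from $X$) is transported unchanged through the square, but this is clear since $j_n$ and $j_1$ are $\U(n)$--equivariantly trivial and the change of universe functor alters neither the internal nor the external action.
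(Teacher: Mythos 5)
Your proposal matches the paper's proof essentially step for step: both start from the same commutative square of topological indexing categories, deduce $\IUR \circ (\alpha_n)^* = (\overline{\alpha_n})^* \circ \IUR$ from the fact that all four relevant functors are precompositions, and then dispatch the remaining orientation by pre- and post-composing with $\IRU$ using that $(\IRU,\IUR)$ is an adjoint equivalence, exactly as in the proof of diagram $(4)$ and Corollary \ref{cor: derivatives commute with change of universe}. The additional details you supply (the object-level check $\C^n\otimes_\C V_\C \cong \C\otimes(\R^n\otimes V)$, and the remark on compatibility of $\U(n)$-actions) are correct and merely make explicit what the paper leaves to the reader.
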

\begin{proof}
Consider the diagram 
\[
\xymatrix{
\J_1^\mathbf{R} \ar[r]^{j}  & \J_1^\mathbf{U} \\
\J_n^\bR \ar[u]^{\overline{\alpha_n}} \ar[r]_{j} & \J_n^\bU \ar[u]_{\alpha_n}
}
\]
which commutes on objects and morphisms. Hence there is a natural isomorphism
\[
\IUR  \circ (\alpha_n)^* =(\alpha_n \circ j)^* \cong (j \circ \overline{\alpha_n})^* = (\overline{\alpha_n})^* \circ \IUR,
\]
and the result follows for the diagram in which $\IUR$ is a right adjoint. It is left to consider the diagram in which $\IUR$ is the left adjoint. Postcomposing both sides of the above natural transformation with $\IRU$ gives a natural isomorphism
\[
(\alpha_n)^* \cong \IRU \circ \IUR  \circ (\alpha_n)^* \cong \IRU \circ (\overline{\alpha_n})^* \circ \IUR,
\]
which after precomposing with $\IRU$ on both sides, yields the result
\[
(\alpha_n)^* \circ \IRU \cong \IRU \circ (\overline{\alpha_n})^* \circ \IUR \circ \IRU \cong \IRU \circ (\overline{\alpha_n})^*. \qedhere
\]
\end{proof}

%

\section{Recovering stable splittings of Arone, Miller and Mitchell--Richter}\label{section:Recovering stable splittings of Arone, Miller and Mitchell--Richter}

One key application of unitary calculus is the stable splitting result of Arone \cite{Ar01} which recovers the stable splitting of Stiefel manfiolds due to Miller, \cite{Mi85}, and verified a conjecture of Mahowald on the stable splitting of the Mitchell--Richter filtration of loops on Stiefel manifolds, see \cite[Theorem 1.2]{Ar01}. In this section we show that the unitary stable splitting of Arone, \cite[Theorem 1.1]{Ar01} is a direct consequence of the analogous result in calculus with reality. We start by proving the ``with reality'' version of \cite[Theorem 1.1]{Ar01}, the proof of which is similar. A similar result is contained in the thesis of Tynan \cite[Chapter 5]{Ty16}, but as remarked in \cite[Remark 2.3]{Ta20b}, Tynan's ``Equivariant Weiss Calculus'' and the calculus with reality are not equivalent. 

\begin{lem}\label{lem: reality stable splitting}
Let $F$ be a functor with reality. Suppose there exists a filtration of $F$ by sub--functors $F_n$, such that $F_0$ is the constant functor at a point, and for all $n \geq 1$, the functor
\[
V_\C \longmapsto F_n(V_\C)/F_{n-1}(V_\C)
\]
is (up to natural weak equivalence) of the form
\[
V_\C \mapsto (X_n \wedge S^{\C^n \otimes V_\C})_{h\U(n)}
\]
where $X_n$ is a based space equipped with an action of $C_2 \ltimes \U(n)$. Then the filtration stably splits, i.e., there is a natural stable equivalence 
\[
F \simeq \bigvee_{n=1}^\infty F_n/F_{n-1}.
\]
\end{lem}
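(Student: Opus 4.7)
The plan is to follow the proof of \cite[Theorem 1.1]{Ar01} essentially verbatim, upgraded to the $C_2$-equivariant setting of calculus with reality. The first step is to identify each subquotient $F_n/F_{n-1}$ as an $n$-homogeneous functor with reality: by Proposition \ref{homog characterisation} applied to the calculus with reality, functors of the stated form (stably) realise the characteristic shape of an $n$-homogeneous functor, with associated $C_2 \ltimes \U(n)$-spectrum $\Sigma^\infty X_n$. The given $C_2 \ltimes \U(n)$-action on $X_n$ is precisely the data required for this classification to apply in calculus with reality.

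Next I would induct on $n$. The base case $n = 0$ is immediate from $F_0 \simeq *$. For the inductive step, assuming a natural stable equivalence
\[
F_{n-1} \simeq \bigvee_{k=1}^{n-1} F_k/F_{k-1}
\]
has already been produced, the cofibre sequence $F_{n-1} \to F_n \to F_n/F_{n-1}$ gives a connecting map $\partial \colon F_n/F_{n-1} \to \Sigma F_{n-1}$; showing that $\partial$ is stably null-homotopic produces a stable splitting $F_n \simeq F_{n-1} \vee F_n/F_{n-1}$, and passing to the (homotopy) colimit $F = \hocolim_n F_n$ yields the desired splitting.

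The main obstacle is producing the stable null-homotopy of $\partial$, equivalently a stable section $F_n/F_{n-1} \to F_n$ of the quotient map. In Arone's original proof this is achieved by exploiting the precise form of the subquotient: the $\U(n)$-transfer associated to the principal $\U(n)$-bundle implicit in the homotopy orbit construction $(X_n \wedge S^{\C^n \otimes V_\C})_{h\U(n)}$, together with a James-type model and connectivity estimates comparing consecutive stages of the filtration, is used to build the section explicitly.

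The only required modification in the present setting is to ensure that the constructions respect the extra $C_2$-action, and this is automatic: because the calculus with reality is set up using the underlying (coarse) model structure on $C_2\T$, every ingredient of Arone's construction (bar-construction models for homotopy orbits, the $\U(n)$-transfer, and the connectivity estimates used to compare successive stages of the filtration) has an evident $C_2$-equivariant refinement, with the $C_2$-action coming from complex conjugation on $V_\C$ together with the given action on $X_n$. Arone's argument therefore produces a $C_2$-equivariant stable section, which gives the required null-homotopy of $\partial$ and completes the inductive step, and hence the proof.
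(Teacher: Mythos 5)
Your inductive framing (split each cofibre sequence $F_{n-1} \to F_n \to F_n/F_{n-1}$ stably and pass to the colimit) matches the paper's, but the mechanism you propose for the splitting is not what Arone's argument — or the paper's reality version of it — actually does, and it leaves a genuine gap. You claim the stable section $F_n/F_{n-1} \to F_n$ is produced by a $\U(n)$-transfer associated to the homotopy orbit construction, together with a James-type model and connectivity estimates; you then assert these all refine $C_2$-equivariantly and conclude. But this is a misremembering of \cite[Theorem 1.1]{Ar01}: the splitting there is produced purely by the calculus, not by transfer constructions, and as written your argument does not actually exhibit the null-homotopy of the connecting map. A transfer is a wrong-way map on homotopy orbits; it does not, without substantial additional work, produce a section of a quotient map of functors, and you offer no detail on how it would.

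The paper's proof instead exploits polynomial degree. One first shows, by an inductive rotation argument on the fibre sequences $\Omega^\infty\Sigma^{\infty+k}(F_{n-1}) \to \Omega^\infty\Sigma^{\infty+k}(F_n) \to \Omega^\infty\Sigma^{\infty+k}(F_n/F_{n-1})$ together with \cite[Example 4.9, Lemma 4.7]{Ta20b}, that $\Omega^\infty\Sigma^{\infty+k}(F_n)$ is $n$-polynomial for every $k \geq 0$. Then one applies $T_{n-1}$ to the stable cofibre sequence: since $F_n/F_{n-1}$ is (stably) $n$-homogeneous, its $(n-1)$-polynomial approximation is trivial, so the map $T_{n-1}\Omega^\infty\Sigma^\infty(F_{n-1}) \to T_{n-1}\Omega^\infty\Sigma^\infty(F_n)$ is a levelwise weak equivalence; and since $\Omega^\infty\Sigma^\infty(F_{n-1})$ is already $(n-1)$-polynomial, the unit $\Omega^\infty\Sigma^\infty(F_{n-1}) \to T_{n-1}\Omega^\infty\Sigma^\infty(F_{n-1})$ is also a levelwise weak equivalence. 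Composing gives a retraction of $\Omega^\infty\Sigma^\infty(F_n)$ onto $\Omega^\infty\Sigma^\infty(F_{n-1})$, which splits the cofibre sequence. Your observation that everything refines $C_2$-equivariantly because the coarse model structure on $C_2\T$ is used is correct and is indeed the only genuinely new point to verify, but you have attached it to the wrong splitting mechanism. To repair the proof you must replace the transfer/James argument with the polynomial degree argument: establish that $\Omega^\infty\Sigma^{\infty+k}(F_n)$ is $n$-polynomial in the calculus with reality, and use $T_{n-1}$ to produce the retraction.
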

\begin{proof}
By \cite[Example 4.9]{Ta20b}, the functor
\[
V_\C \longmapsto \Omega^\infty[(\Theta \wedge S^{\C^n \otimes V_\C})_{h\U(n)}],
\]
for $\Theta$ a spectrum with an action of $C_2 \ltimes \U(n)$, is $n$--polynomial. It follows that the functor 
\[
V_\C \longmapsto \Omega^\infty\Sigma^\infty[(X_n \wedge S^{\C^n \otimes V_\C})_{h\U(n)}],
\]
is $n$--polynomial, for $X_n$ a based $(C_2 \ltimes \U(n))$--space.

Moreover, the functor $\Omega^\infty \Sigma^{\infty+k} (F_n)$ is $n$--polynomial for any $k \geq 0$. To see this, we argue by induction. The base case $n=0$ is trivial since $F_0 = \ast$. There is a fibre sequence
\[
\Omega^\infty\Sigma^{\infty+k} (F_{n-1}) \longrightarrow \Omega^\infty\Sigma^{\infty +k}(F_n) \longrightarrow \Omega^\infty\Sigma^{\infty +k }(F_n/F_{n-1}),
\]
which after rotation gives a fibre sequence 
\[
\Omega^\infty\Sigma^{\infty+k} (F_{n}) \longrightarrow \Omega^\infty\Sigma^{\infty +k}(F_n/F_{n-1}) \longrightarrow \Omega^\infty\Sigma^{\infty +k +1}(F_{n-1}).
\]
By the induction hypothesis the right hand term is $n$--polynomial, and the middle term is $n$--polynomial since the quotient $F_n/F_{n-1}$ is (up to weak equivalence) of the form 
\[
V_\C \mapsto (X_n \wedge S^{\C^n \otimes V_\C})_{h\U(n)}.
\]
It follows by \cite[Lemma 4.7]{Ta20b}, that $\Omega^\infty\Sigma^{\infty+k} (F_{n})$ is $n$--polynomial. 

Consider the commutative diagram 
\[
\xymatrix{
\Omega^\infty\Sigma^\infty (F_{n-1}) \ar[r] \ar[d] & \Omega^\infty\Sigma^\infty(F_n) \ar[r] \ar[d] & \Omega^\infty\Sigma^\infty(F_n/F_{n-1}) \ar[d] \\
T_{n-1}\Omega^\infty\Sigma^\infty(F_{n-1}) \ar[r] & T_{n-1}\Omega^\infty\Sigma^\infty(F_n) \ar[r] & \Omega^\infty\Sigma^\infty(F_n/F_{n-1}).\\
}
\]
in which the rows are fibre sequences, and the bottom right hand functor is levelwise weakly contractible, since $F_n/F_{n-1}$ is $n$--homogeneous. It follows that the lower left hand map is a levelwise weak equivalence. Since $\Omega^\infty\Sigma^\infty (F_{n-1})$ is $(n-1)$--polynomial, the left hand vertical map is also a levelwise weak equivalence, hence the top fibre sequence splits, 
\[
\Sigma^\infty (F_{n}) \simeq \Sigma^\infty (F_{n-1}) \vee \Sigma^\infty (F_n/F_{n-1}) \simeq \Sigma^\infty (F_{n-1} \vee F_n/F_{n-1}).
\]
Arguing via induction on the degree of the filtration yields the result. 
\end{proof}

As corollaries we obtain underlying $C_2$--equivariant stable splittings of Stiefel manifolds and loops on Stiefel manifolds, where underlying means we consider the stable splitting in spectra with an action of $C_2$, rather than `genuine' $C_2$--spectra. For a `genuine' version we would require a `genuine' equivariant version of orthogonal calculus, which to the author's knowledge, does not exist.

\begin{ex}[Miller splitting]\label{ex: miller splitting}
Let $V,W \in \CJ_0$ with $\dim_\C(W) \geq 1$. The Miller filtration on $\CJ_0(V, V\oplus W)$ stably splits. That is, there is a stable equivalence
\[
\CJ_0(V, V \oplus W) \simeq \bigvee_{n=1}^\infty R^n(V;W)/R^{n-1}(V;W).
\]
\end{ex}
\begin{proof}
A linear isometry $f \in \CJ_0(V,V \oplus W)$ may be written as $f= f_1 + f_2$, with $f_1 \in \CJ_0(V, V)$ and $f_2 \in \CJ_0(V,W)$. The terms of the Miller filtration are then defined as
\[
R^n(V;W) = \{ f_1 + f_2 \in \CJ_0(V, V\oplus W) \mid \dim (\ker(f_1 - \id)^\perp) \leq n\},
\]
for all $0 \leq n \leq \dim(V)$. These spaces inherit a $C_2$--action from the $C_2$--action on $\CJ_0(V, V \oplus W)$ by conjugation by complex conjugation. The construction of the filtration is functorial in $W$, hence $R^n(V;-)$ is an object of $\CE_0$. The quotients of the filtration are given by Thom spaces of vector bundles defined over Grassmannian manifolds, and by \cite[p.1208]{Ar01}, may be written as 
\[
R^n(V;W) / R^{n-1}(V;W) \cong (S^{\mathrm{Ad}_{C_2 \ltimes \U(n)}} \wedge \CJ_0(\C^n, V) \wedge S^{nW})_{h\U(n)},
\]
up to $C_2$--equivariant homeomorphism, where $\mathrm{Ad}_{C_2 \ltimes \U(n)}$ is the adjoint representation of $C_2 \ltimes \U(n)$. As such, Lemma \ref{lem: reality stable splitting} implies the stable splitting.
\end{proof}

\begin{rem}
It is also possible to use the adjoint representation of $\U(n)$ with the canonical $C_2 \ltimes \U(n)$--action in place of the adjoint representation of $C_2 \ltimes \U(n)$ since the adjoint representation of a Lie group $G$ is given by  the Lie algebra of $G$, which is completely determined by the connected component of $G$, hence the Lie algebra of $C_2 \ltimes \U(n)$ and the Lie algebra of $\U(n)$ are the same, and their respective $C_2 \ltimes \U(n)$ actions coincide. 
\end{rem}

To discuss the Mitchell--Richter splitting in this setting we have to clearly set out the framework, which is taken from \cite[\S2]{CrabbLoops}. This example was also discussed by Tynan in \cite[Chapter 4, Chapter 5]{Ty16} in his ``Equivariant Weiss Calculus'' framework. Identify $S^1$ with the unit sphere in $\C$ and set 
\[
\Omega_n\U(V) = \{ g \in \Omega\U(V) \mid \mathrm{det} \circ g : S^1 \to S^1 \ \text{has degree} \ n \},
\]
where $\mathrm{det}: \U(V) \to S^1$ is the determinant. Given an $n$--dimensional subspace $U$ of $V$, we define an element $\rho_U : \Omega\U(V)$ by $\rho_U(z) = \pi_Uz + (\id - \pi_U),$ where $z : S^1 \hookrightarrow \C$ is the inclusion, and $\pi_U$ is the orthogonal projection onto $U$. We can define the subspace $S_n(V)$ of $\Omega_n\U(V)$ as the image of the map
\[
\begin{split}
\C P(V) \times \cdots \times \C P(V) &\longrightarrow \Omega_n\U(V) \\
(L_1, \cdots L_n) &\longmapsto \rho_{L_1}\cdots \rho_{L_n},
\end{split}
\]
where the product of the $\rho_L$'s is given by the topological group structure on $\Omega\U(V)$ induced by point--wise multiplication in $\U(V)$. 

\begin{ex}[Mitchell--Richter splitting]\label{mitchell--richter splitting}
Let $V,W \in \CJ_0$, with $\dim_\C(W) \geq 1$. The Mitchell--Richter filtration on $\Omega\CJ_0(V, V\oplus W)$ stably splits. That is, there is a stable equivalence
\[
\Omega\CJ_0(V, V \oplus W) \simeq \bigvee_{n=1}^\infty S^n(V;W)/S^{n-1}(V;W).
\]
\end{ex}
\begin{proof}
The space $\Omega\CJ_0(V, V \oplus W)$, or rather, an algebraic loop space that is weakly equivalent to $\Omega\CJ_0(V, V \oplus W)$ when $\dim(W) \geq 1$, (for detailed see \cite[Chapter 4]{Ty16} or \cite[p.50]{CrabbLoops}) is filtered by subspaces $S^n(V;W)$ for all $n \geq 0$. The subspace $S^n(V;W)$ is the image of a subspace $S_n(V\oplus W)$ under the quotient map $q: \Omega\U(V\oplus W) \mapsto \Omega\CJ_0(V, V \oplus W)$. As in \cite[p.1208]{Ar01} or \cite[\S4.5]{Ty16}, the spaces $S^n(V;W)$ are a well--defined filtration, with quotient $S^n(V;W)/S^{n-1}(V;W)$ homotopy equivalent (in $C_2$--spaces) to the space 
\[
(\tilde{S}_n(V)_+ \wedge S^{nW})_{h\U(n)}
\]
where $\tilde{S}_n(V)$ is the pullback
\[
\xymatrix{
\tilde{S}_n(V) \ar[r] \ar[d] & \ar[d] \CJ(\C^n,V) \\
S_n(V) \ar[r] & \CJ(\C^n, V)/\U(k).  \\
}
\]
The result then follows by Theorem \ref{lem: reality stable splitting}.
\end{proof}

We now show that Lemma \ref{lem: reality stable splitting} implies the unitary stable splitting of Arone, \cite[Theorem 1.1]{Ar01}, and hence Example \ref{ex: miller splitting} and Example \ref{mitchell--richter splitting} imply the classical Miller and Mitchell--Richter splittings respectively. 

\begin{thm}\label{recovering of splitting}
Let $F$ be a functor with reality. Suppose there exists a filtration of $F$ by sub--functors $F_n$, such that $F_0$ is the constant functor at a point, and for all $n \geq 1$, the functor
\[
V_\C \longmapsto F_n(V_\C)/F_{n-1}(V_\C),
\]
is (up to natural levelwise weak equivalence) of the form
\[
V_\C \longmapsto (X_n \wedge S^{\C^n \otimes V_\C})_{h\U(n)},
\]
where $X_n$ is a based space equipped with an action of $C_2 \ltimes \U(n)$. Then the functor $\IRU(i^*F)$ is a unitary functor with a filtration by sub--functors $\IRU(i^*F)_n$, such that $\IRU(i^*F)_0$ is the constant functor at a point, and for all $n \geq 1$, the functor,
\[
W \longmapsto \IRU(i^*F)_n(W)/\IRU(i^*F)_{n-1}(W),
\]
where $W \in \J_0^\bU$ is (up to natural levelwise weak equivalence) of the form
\[
W \mapsto (i^*X_n \wedge S^{\C^n \otimes W})_{h\U(n)}.
\]
Hence $\IRU(i^*F)$ stably splits in the sense of \cite[Theorem 1.1]{Ar01}.
\end{thm}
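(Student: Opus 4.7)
The plan is to transport the filtration along the composite $\IRU \circ i^*$ and identify the quotients. Set $\IRU(i^*F)_n := \IRU(i^*F_n)$. First I would argue this gives a filtration by sub--functors: both $i^*$ and $\IRU$ preserve monomorphisms. For $i^*$, this is immediate from the fact that forgetting a $C_2$--action to the underlying space does not alter the underlying map of spaces, and for $\IRU$ it follows from being a component of the adjoint equivalence $(\IRU, \IUR)$ in Proposition~\ref{prop: QA for R and U}. Hence the inclusions $F_{n-1} \hookrightarrow F_n$ become inclusions $\IRU(i^*F_{n-1}) \hookrightarrow \IRU(i^*F_n)$.

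Next, the key identification is a natural isomorphism
\[
\IRU(i^*F)_n / \IRU(i^*F)_{n-1} \cong \IRU\left(i^*\left(F_n/F_{n-1}\right)\right).
\]
This follows because both $i^*$ and $\IRU$ preserve pushouts (in particular quotients): $i^*$ has a left adjoint $(C_2)_+ \wedge (-)$ but, more strongly, commutes with all colimits since the underlying space construction does; and $\IRU$ preserves colimits as half of an adjoint equivalence of categories. Applying these observations levelwise on $\CJ_0$ respectively $\J_0^{\mathbf{U}}$ gives the displayed isomorphism.

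The remaining task is to identify $\IRU(i^*(F_n/F_{n-1}))$ with a functor of the required form. By hypothesis, $F_n/F_{n-1}$ is naturally levelwise weakly equivalent to $V_\C \longmapsto (X_n \wedge S^{\C^n \otimes V_\C})_{h\U(n)}$. Forgetting the $C_2$--action yields the functor $V_\C \longmapsto (i^*X_n \wedge S^{\C^n \otimes V_\C})_{h\U(n)}$ in $\RE_0$, since $i^*$ commutes with smash products, homotopy orbits, and suspensions by representations (the underlying space of each is unchanged). Now the candidate unitary functor
\[
G_n : W \longmapsto (i^*X_n \wedge S^{\C^n \otimes W})_{h\U(n)} \in \E_0^{\mathbf{U}},
\]
restricts along $j : \CJ_0 \hookrightarrow \J_0^{\mathbf{U}}$ to precisely the functor above; that is, $\IUR G_n$ is naturally isomorphic to $i^*(F_n/F_{n-1})$ (up to levelwise weak equivalence). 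Applying $\IRU$ and using that $(\IRU, \IUR)$ is an adjoint equivalence (Proposition~\ref{prop: QA for R and U}), we conclude $\IRU(i^*(F_n/F_{n-1})) \simeq G_n$ naturally.

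This establishes that $\IRU(i^*F)$ admits a filtration by sub--functors whose subquotients have the form required by Arone's stable splitting criterion \cite[Theorem 1.1]{Ar01}, and that theorem then delivers the desired stable splitting. I expect the main subtlety to be the naturality of the identification $\IUR G_n \simeq i^*(F_n/F_{n-1})$, which must be compatible with the filtration inclusions so that the resulting splitting is genuinely the filtration splitting of $\IRU(i^*F)$; this amounts to checking that the chosen equivalence in the hypothesis is natural in $V_\C$ and that the coend formula defining $\IRU$ respects this naturality, both of which are formal consequences of $\IRU$ being a categorical equivalence rather than a derived one.
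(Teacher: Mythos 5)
Your proposal is correct and takes a genuinely different route at the final identification step. Both proofs set $\IRU(i^*F)_n := \IRU(i^*F_n)$ and establish $\IRU(i^*F)_n/\IRU(i^*F)_{n-1} \simeq \IRU(i^*(F_n/F_{n-1}))$ from preservation of cofibre sequences under $i^*$ and $\IRU$. The paper then invokes Theorem~\ref{thm: recover tower} (naturality of the Taylor tower under $\IRU \circ i^*$) together with the characterisation of $n$--homogeneous functors, Proposition~\ref{homog characterisation}, to identify the resulting quotient. You instead observe directly that $i^*(F_n/F_{n-1})$, being naturally levelwise weakly equivalent to $V_\C \mapsto (i^*X_n \wedge S^{\C^n \otimes V_\C})_{h\U(n)}$, is $\IUR$ of the evident unitary functor $G_n\colon W \mapsto (i^*X_n \wedge S^{\C^n \otimes W})_{h\U(n)}$, and then apply $\IRU$ and the adjoint equivalence $(\IRU,\IUR)$ to conclude $\IRU(i^*(F_n/F_{n-1})) \simeq G_n$. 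Your approach is more elementary: it bypasses the Taylor tower machinery entirely and works purely at the level of the change--of--universe equivalence. Its one extra point to keep in mind --- which you correctly flag --- is that $\IRU$ must preserve levelwise weak equivalences so that the weak equivalence $\IUR G_n \simeq i^*(F_n/F_{n-1})$ transports; this is guaranteed since $\IRU E(V)$ is a smash over $\U(n)$ against a free $\U(n)$--CW complex (the Stiefel manifold $\UJ_0(\C^n,V)_+$), hence preserves levelwise weak equivalences. The paper's route has the advantage of illustrating how the general tower--comparison theorem applies, but yours is the shorter and more self--contained argument for this specific claim.
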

\begin{proof}
Define the sub--functor $\IRU(i^*F)_n = \IRU(i^*F_n)$. The collection $\{\IRU(i^F)_n\}$ form a filtration of $\IRU(i^*F)$ with $\IRU(i^*F)_0 = \IRU(i^*F_0)=\ast$. Given that the quotient $F_n/F_{n-1}$ are (up to levelwise weak equivalence) of the form 
\[
V_\C \longmapsto (X_n \wedge S^{\C^n\otimes V_\C})_{h\U(n)},
\]
it is left to show that the quotient $\IRU(i^*F)_n/\IRU(i^*F)_{n-1}$ is of the form 
\[
W \mapsto (Y_n \wedge S^{\C ^n \otimes W})_{h\U(n)},
\]
for $Y_n$ a $\U(n)$--space, and $W \in \J_0^\mathbf{U}$. There is a homotopy cofiber sequence 
\[
F_{n-1} \longrightarrow F_n \longrightarrow F_n/F_{n-1},
\]
which induces a homotopy cofibre sequence
\[
\IRU(i^*F)_{n-1} \longrightarrow \IRU(i^*F)_n \longrightarrow \IRU(i^*F)_n/\IRU(i^*F)_{n-1},
\]
hence $\IRU(i^*F)_n/\IRU(i^*F)_{n-1} \simeq \IRU(i^*(F_n/F_{n-1}))$. It follows from Theorem \ref{thm: recover tower}, and the characterisation of $n$--homogeneous functors, Proposition \ref{homog characterisation}, that  $\IRU(i^*F)_n/\IRU(i^*F)_{n-1}$ is (up to natural weak equivalence) of the form, 
\[
W \mapsto (i^*X_n \wedge S^{nW})_{h\U(n)}. \qedhere
\]
\end{proof}

\bibliography{references}
\bibliographystyle{alpha}
\end{document}